\definecolor{bleu_sombre}{rgb}{0,0,0.6}\definecolor{rouge_sombre}{rgb}{0.8,0,0}\definecolor{vert_sombre}{rgb}{0,0.6,0}
\newcommand{\real}{\mathbb{R}}
\newcommand{\intr}{\int_{\mathbb{R}}}
\newcommand{\N}{\mathbb{N}}
\newcommand{\R}{\mathbb{R}}
\newcommand{\C}{\mathbb{C}}
\newcommand{\ep}{\epsilon}
\newcommand{\ffi}{\varphi}
\newcommand{\lam}{\lambda}
\newcommand{\gam}{\gamma}
\newcommand{\al}{\alpha}
\newcommand{\om}{\omega}
\newcommand{\diff}{\,\mathrm{d}}
\newcommand{\dif}{\mathrm{d}}
\newcommand{\Hds}{H^{2,*}}
\newcommand{\DD}{\mathfrak{D}}
\newcommand{\norm}[1]{\ensuremath{\left\Vert #1 \right\Vert }}
\newcommand{\ps}[1]{\ensuremath{\left\langle #1 \right\rangle }}
\DeclareMathOperator \sgn{sgn}
\DeclareMathOperator \rge{rge}
\DeclareMathOperator \re{Re}
\newtheorem{theorem}{Theorem}
\newtheorem{lemma}[theorem]{Lemma}
\newtheorem{proposition}[theorem]{Proposition}
\newtheorem{corollary}[theorem]{Corollary}
\newtheorem{remark}[theorem]{Remark}
\numberwithin{equation}{section}
\numberwithin{theorem}{section}
\newtheorem{definition}[theorem]{Definition}
\renewcommand{\leq}{\leqslant}	\renewcommand{\geq}{\geqslant}
\renewcommand\over[2]{{\,\buildrel #1\over#2\,}}
\newcommand{\inv}{^{-1}}
\newcommand {\limt}[2]{\xrightarrow[#1 \to #2]{}}
\newcommand{\abs}[1]{\left\vert #1\right\vert}        
\newcommand{\nr}[1]{\left\Vert #1\right\Vert}         
\newcommand{\innp}[2]{\left< #1 , #2 \right>}         
\newcommand{\Dom}{\Dc}			
\newcommand{\st}{\,:\,}					
\renewcommand{\Re}{\mathop{\rm{Re}}\nolimits}        
\renewcommand{\Im}{\mathop{\rm{Im}}\nolimits}        
\DeclareMathOperator{\Id}{Id}                        
\newcommand{\eqv}{\Longleftrightarrow}               
\renewcommand{\a}{\alpha}\renewcommand{\b}{\beta}\newcommand{\g}{\gamma}\newcommand{\G}{\Gamma}\renewcommand{\d}{\delta}
\newcommand{\z}{\zeta} \newcommand{\Th}{\Theta}\renewcommand{\k}{\kappa}\renewcommand{\l}{\lambda}\renewcommand{\L}{\Lambda}\newcommand{\m}{\mu}
\newcommand{\s}{\sigma}
\renewcommand{\t}{\tau}\newcommand{\f}{\varphi}\newcommand{\p}{\psi}\renewcommand{\o}{\omega}\renewcommand{\O}{\Omega}
\newcommand{\Ac}{{\mathcal A}}\newcommand{\Dc}{{\mathcal D}}\newcommand{\Jc}{{\mathcal J}}\newcommand{\Lc}{{\mathcal L}}
\newcommand{\qandq}{\quad \text{and} \quad}
\newcommand{\sbullet}{%
  \hbox{\fontfamily{lmr}\fontsize{.4\dimexpr(\f@size pt)}{0}\selectfont\textbullet}}
\DeclareRobustCommand{\mathbullet}{\accentset{\sbullet}}
\begin{document}

\title[a nonlinear Klein--Gordon equation with delta potentials]
{Stability of standing waves for a nonlinear Klein--Gordon equation with delta potentials}

\author[E. Csobo]{Elek Csobo}
\address{Delft University of Technology\\
Van Mourik Broekmanweg~6\\
2628 XE Delft, The Netherlands}
\email{e.csobo@tudelft.nl}

\author[F. Genoud]{Fran\c cois Genoud}
\address{Ecole Polytechnique F\'ed\'erale de Lausanne \\
EPFL Station 4 \\
1015 Lausanne, Switzerland}
\email{francois.genoud@epfl.ch}

\author[M. Ohta]{Masahito Ohta}
\address{Tokyo University of Science \\
1-3 Kagurazaka, Shinjukuku \\
Tokyo 162-8601, Japan}
\email{mohta@rs.tus.ac.jp}

\author[J. Royer]{Julien Royer}
\address{Universit\'e Toulouse 3 \\
118 route de Narbonne \\
31062 Toulouse Cedex~9, France}
\email{julien.royer@math.univ-toulouse.fr}

\keywords{Nonlinear Klein--Gordon equation, standing waves, orbital stability, delta potential}

\begin{abstract}

In this paper, we study local well-posedness and orbital stability of standing waves for a  
singularly perturbed one-dimensional nonlinear Klein--Gordon equation.
We first establish local well-posedness of the Cauchy problem by a fixed point argument. 
Unlike the unperturbed case, 
a noteworthy difficulty here arises from the possible non-unitarity of the 
semigroup generating the corresponding linear evolution. 
We then show that the equation is Hamiltonian and we establish several stability/instability results 
for its standing waves. Our analysis relies on a detailed study of the spectral properties 
of the linearization of the equation, and on the well-known `slope condition' for orbital stability.
\end{abstract}

\maketitle

\newcommand{\stepp}{\noindent {\bf $\bullet$}\quad }

\newcommand{\EE}{\mathscr E}\newcommand{\EEg}{\mathscr E}
\newcommand{\HH}{\mathscr H}\newcommand{\HHg}{{\mathscr H_\g}} \newcommand{\tHH}{{\tilde {\mathscr H}}}
\newcommand{\dHH}{{\mathbullet {\mathscr H}}} \newcommand{\dHHg}{{\mathring {\mathscr H}_\g}}
\newcommand{\LL}{\mathscr L}
\newcommand{\XX}{\mathscr X}
\newcommand{\Aga}{\Ac_{\g,\a}}\newcommand{\Aoo}{\Ac_{0,0}}

\newcommand{\Lp}{{L}^+_{\b}}\newcommand{\Lpo}{{L}^+_{0}}
\newcommand{\Lm}{{L}^-_{\b}}
\newcommand{\Lcb}{\Lc_{\b}}
\newcommand{\Lob}{\Lc_{\b}}
\newcommand{\Lpp}{L_\b''}
\newcommand{\RLpp}{\tilde L_\b''}
\newcommand{\tildeL}{\tilde T}

\newcommand{\ess}{{\mathrm{ess}}}

\section{Introduction}

The purpose of this work is to initiate the study of the Cauchy problem for 
a singularly perturbed one-dimensional nonlinear Klein--Gordon equation, namely†
\begin{equation} \label{eq-wave-0}
\begin{cases}
u_{tt} - u_{xx} + m^2 u + \g \d u + i \a \d u_t -|u|^{p-1}u = 0,\\
u(t,x) \limt {\abs x} \infty 0,\\
(u(t),\partial_t u(t))|_{t = 0} = (u_0,u_1),
\end{cases}
\end{equation}
where $u:\R\times\R\rightarrow \C$, $m>0$, $\a, \g \in \R$ are parameters and 
$p>1$ determines the strength of the nonlinearity.
The two coefficients $\delta=\delta(x)$ are singular perturbations both given by a Dirac mass at $x=0$,
often referred to as a `delta potential' in the context of one-dimensional evolution equations.
Such space-dependent problems are sometimes termed `inhomogeneous', as they model 
wave propagation in inhomogeneous media.

The condition that $u$ vanishes at spatial infinity reflects a common physical requirement of having
spatially localized waves, sometimes called `solitons'. We shall in fact seek solutions of 
\eqref{eq-wave-0} in $H^1(\R)$.

We will show that the evolution generated by \eqref{eq-wave-0} admits a peculiar Hamiltonian
formalism, with a symplectic structure depending on the coupling constant 
$\a\in\R$. In fact, if $\a\in\C\setminus\R$, the Hamiltonian of the system is not constant along the flow,
as can be deduced from the symplectic structure in Section~\ref{ham.sec}, or by a direct calculation
using a smooth solution.
Furthermore,
this Hamiltonian system is phase invariant (if $u$ is a solution, so is $e^{i\theta}u$, for any
$\theta\in\R$), and thus possesses standing wave solutions, of the form $u(t,x)=e^{i \o t}\ffi(x)$,
with $\o\in\R$ and $\ffi:\R\to\R$.
The stability of standing waves in Hamiltonian systems with symmetries 
has attracted a lot of attention since the 1980's. 
So far, this issue has been fairly well understood in homogeneous media, 
while in inhomogeneous media it is still a subject of intense research, 
both theoretically and experimentally. 
Inhomogeneous nonlinear dispersive equations appear in various fields of physics 
such as nonlinear optics, cold quantum gases (e.g.~Bose--Einstein condensates), 
plasma physics, etc. 
More specifically,
our interest in the present problem was initially motivated by \cite{Danshita}, where
\eqref{eq-wave-0} arises as an effective model for a superfluid Bose gas.

The nonlinear Klein--Gordon equation in homogeneous media has been extensively studied. 
A detailed presentation of the local and global well-posedness theory can be found in \cite{CH98}. 
Orbital stability of standing wave solutions was first addressed in the classical papers of 
Shatah \cite{Shatah1, Shatah2}, and Shatah and Strauss \cite{Shatah3}. 
They proved that, in $N$ space dimensions, standing waves of any frequency are orbitally 
unstable if $p\geq 1+4/N$. If $1<p<1+4/N$, then there exists a critical frequency $\om_c$
such that a standing wave of frequency $\om$ is orbitally stable if $\om_c<|\om|<m$ 
and unstable if $|\om|<\om_c$. Strong instability by blow-up in finite time was 
studied by Liu, Ohta and Todorova \cite{Liu}, and by Ohta and Todorova \cite{Ohta1, Ohta2}. 
In \cite{Jeanjean}, Jeanjean and Le~Coz introduced a mountain-pass approach 
to orbital stability for the Klein--Gordon equation, which allowed them to simplify the classical 
proofs and to obtain new results. 
In \cite{Bellazzini}, Bellazzini, Ghimenti and Le~Coz proved the existence of 
multi-solitary waves for the nonlinear Klein--Gordon equation.

The effect of a singular potential on the dynamics of the nonlinear Schr\"odinger equation has 
recently attracted substantial attention. Well-posedness of the Cauchy problem in the presence 
of a delta potential was studied in \cite{Adami}, while scattering
was addressed in \cite{ikeda,datchev,Goodman}. The stability of 
standing waves of the nonlinear Schr\"odinger equation with a delta potential was studied in 
\cite{Fukuizumi1, Fukuizumi2,LeCoz, Genoud, masaki} in various regimes. 
(Some authors consider $\delta'$ interactions as well, see e.g.~\cite{Adami2,pava}.)
Stability properties of so-called `black solitons' (standing waves with $|\ffi(x)|\to 1$ as $|x|\to\infty$) 
were also recently addressed in \cite{black}.

The present work is a first step in the study of the nonlinear Klein--Gordon equation with delta potentials. 
Our main goal here is to discuss orbital stability of standing waves of \eqref{eq-wave-0}. 
Shortly after the seminal works \cite{Shatah1,Shatah2,Shatah3},
a general theory of orbital stability for infinite-dimensional Hamiltonian systems with symmetries 
was established by Grillakis, Shatah and Strauss \cite{GSS}. Their approach, based
on the so-called `energy-momentum' method of geometric mechanics, 
was recently revisited by De~Bi\`evre, Rota Nodari and the second author \cite{DeGeRo15}, 
and by Stuart \cite{Stuart}. 
Under general assumptions on the dynamical system, conditions are given in these papers 
for orbital stability and instability. Of course, in order to discuss stability of standing waves,
an essential preliminary step is to prove that 
the Hamiltonian system under consideration is locally well-posed.
We shall thus start by addressing this issue, which is far from obvious 
in the context of \eqref{eq-wave-0}.

The singular terms in \eqref{eq-wave-0} should be interpreted in the sense of distributions. 
Let us assume that $u$ and $u_t$ are continuous at $x=0$. 
$\delta u$ is then defined by $\ps{\delta u,w}=\Re u(0)\bar{w}(0)$,
for any function $w$ continuous at $x=0$. And $\delta u_t$ is defined similarly.
Now, solutions of the equation in \eqref{eq-wave-0}
will be continuous functions satisfying the corresponding unperturbed equation 
(with $\gamma=\alpha=0$) pointwise, outside of $x=0$, together with the jump condition
\begin{equation}\label{jump}
u'(0^+)-u'(0^-)=\g u(0) + i\a u_t(0).
\end{equation}
Formally, this relation is indeed obtained from the equation with the delta potentials
by integrating it over $x\in(-\ep,+\ep)$ and letting $\ep\to0^+$. The notion of solution will be
made more precise in Section~\ref{cauchy.sec}, once the appropriate functional setting has been introduced.

Although writing the delta potentials explicitly may be useful for some formal calculations, 
we now introduce a functional-analytic formulation, based on the jump condition \eqref{jump},
which will make our analysis more transparent.
It is convenient to reformulate the initial-value problem \eqref{eq-wave-0}
as a first order system for the dependent variables $(u,v)=(u,u_t)$. 
We will seek solutions to \eqref{eq-wave-0} with $(u,v)\in\HH=H^1(\R)\times L^2(\R)$, 
which we regard as a real Hilbert space, endowed with the inner product
\[
\ps{(u_1,v_1),(u_2,v_2)}_{\HH}=\ps{u_1',u_2'}_{L^2}+\ps{u_1,u_2}_{L^2}+\ps{v_1,v_2}_{L^2},
\]
where the real $L^2$ inner product is defined as
\[
\ps{u,v}_{L^2}=\Re\intr u\bar v \diff x.
\]
Here and henceforth, $'$ denotes differentiation with respect to $x\in\R$. 

We identify $L^2(\R) \times L^2(\R)$ with its dual. 
Then the dual $\HH^*$ of $\HH$ is $H^{-1}(\R)\times L^2(\R)$, and 
for any $(\ffi,\psi)\in L^2(\R)\times L^2(\R) \subset H^{-1}(\R)\times L^2(\R)$, the duality pairing
is given by
\[
\ps{(\ffi,\psi),(u,v)}_{\HH^*\times\HH}=\ps{\ffi,u}_{L^2}+\ps{\psi,v}_{L^2}, \quad (u,v)\in\HH.
\]
We shall merely write $\ps{\cdot,\cdot}$ for $\ps{\cdot,\cdot}_{\HH^*\times\HH}$ 
when no confusion is possible.

The central object in our discussion of the well-posedness of \eqref{eq-wave-0} in 
Section~\ref{cauchy.sec} is the generator $\Ac$ of the corresponding linear evolution, defined as 
\begin{equation} \label{def-Ac}
\Ac =
\begin{pmatrix}
0 & \Id_{L^2} \\
\partial_x^2 - m^2 & 0
\end{pmatrix},
\end{equation}
with domain
\begin{equation} \label{def-DD}
\DD = \left\{ (u,v) \in \Hds \times  H^1 : u'(0^+) - u'(0^-) = \g u(0) + i \a v(0) \right\} \subset \HH,
\end{equation}
where
\[
\Hds = H^1(\R) \cap  H^2(\R \setminus \{ 0\}).
\]
Note that the effect of the delta potentials is encoded in the domain of the generator.

We will show that the operator $\Ac$ generates a $C^0$-semigroup on $\HH$ which,
remarkably, may not be a unitary group. In contrast to the 
classical unperturbed case, it is in general only exponentially bounded. We recall that a skew-adjoint operator on a Hilbert space generates a unitary group (the results which we use in this paper can be found in \cite{CH98}, we refer for instance to \cite{engel} for a more detailed presentation of the theory of semigroups). Skew-adjointness of $\Ac$ depends on the Hilbert structure chosen on $\HH$. For $\gamma$ non-negative (or negative and close to 0), it will be possible to define on $\HH$ an inner product whose corresponding norm is equivalent to the usual one and which makes $\Ac$ skew-adjoint. However, for other values of $\g$, we can only consider on $\HH$ a Hilbert structure for which $\Ac$ generates a continuous (possibly exponentially growing) semigroup. This is in fact enough for our purpose. Notice that the parameter $\a$ will not play any role in this discussion.

Using Duhamel's formula and the Banach Fixed Point Theorem, 
we will then construct, for any initial data in $\HH$, a unique local in time solution. 
We also prove the blow-up alternative and continuous dependence on the initial data
for this solution.

Next, standing waves, which will be our main focus, are solutions of \eqref{eq-wave-0} of the form
\[
u_\omega(t,x)=e^{i \o t}\ffi_\omega(x),
\]
where $\omega\in\R$, and $\ffi_\omega\in \Hds$ is real-valued and 
satisfies the stationary equation
\begin{equation}\label{stat}
-\ffi''+(m^2-\omega^2)\ffi+(\gamma-\alpha\omega)\delta\ffi-|\ffi|^{p-1}\ffi=0,
\end{equation}
which will be interpreted as
\begin{equation}\label{stat2}
-\ffi''+(m^2-\omega^2)\ffi-|\ffi|^{p-1}\ffi=0, \quad \text{a.e. } x\in\real,
\end{equation}
together with the jump condition
\begin{equation}\label{jump2}
\ffi'(0^+)-\ffi'(0^-)=(\g -\a\om)\ffi(0).
\end{equation}
Non-trivial localized solutions to this problem exist if and only if
\begin{equation} \label{omega_range}
m^2-\om^2>\frac{(\g-\a\om)^2}{4},
\end{equation}
in which case they are given by the explicit formula 
(see Proposition~1 and Remark~1 in \cite{LeCoz})
\begin{equation} \label{explicit}
\ffi_{\omega}(x)=
\left[\frac{(p+1)(m^2-\omega^2)}{2}\textnormal{sech}^2
\left(\frac{(p-1)\sqrt{m^2-\omega^2}}{2}|x|+\tanh^{-1}
\left(-\frac{\gamma-\alpha\omega}{2\sqrt{m^2-\omega^2}}\right)\right)\right]^{\frac{1}{p-1}}.
\end {equation}
In particular, there are no standing wave solutions of \eqref{eq-wave-0} when $m=0$.

\begin{definition} 
\rm
For any fixed $m$, $\a$ and $\g$, we shall say that $\om$ is {\em admissible} 
if it satisfies the relation \eqref{omega_range}. 
\end{definition}

In order to reveal the Hamiltonian structure of the initial-value problem \eqref{eq-wave-0}, 
we shall follow the notation and terminology of \cite{DeGeRo15}.
The Hamiltonian {\em energy} functional associated with \eqref{eq-wave-0}
is given by
\begin{equation}\label{energy}
E(u,v)=\frac12\nr{u'}_{L^2}^2+\frac{m^2}{2}\nr{u}_{L^2}^2
+\frac12\nr{v}_{L^2}^2+\frac{\gamma}{2}|u(0)|^2
-\frac{1}{p+1}\intr |u|^{p+1} \diff x.
\end{equation}
We shall prove in Section~\ref{ham.sec} that $E$ is a constant of the motion. 
Another important quantity is conserved along the flow of the solution, 
namely the {\em charge}, defined as
\begin{equation}\label{charge}
Q(u,v)=\Im\intr u\bar v \diff x -\frac{\alpha}{2}|u(0)|^2.
\end{equation}
We will establish in Section~\ref{ham.sec} that $E, Q \in C^2(\HH,\R)$.
Let us now introduce the {\em symplector} $\Jc:\HH\rightarrow \HH^*$ defined by
\[
\Jc(u,v)=(-i\a\d u-v,u).
\]
This notion, somewhat more flexible than that of a symplectic map, is
introduced in \cite[Sec.~6]{DeGeRo15} to define Hamiltonian systems. It is noteworthy
that the coupling constant $\a$ appears here in the symplectic structure itself.
In this framework, the equation in \eqref{eq-wave-0} is formulated as
the Hamiltonian system
\begin{equation}\label{ham}
\Jc\frac{\dif}{\dif t}U(t)=E'(U(t)),
\end{equation}
where $E'$ denotes the Fr\'echet derivative of $E$. 
A standing wave is now a solution of the form
\begin{equation}\label{stwave}
U_\omega(t,x)=e^{i{\om} t}\Phi_\omega(x),
\end{equation}
where $\Phi_\omega:=(\ffi_\omega,i\om\ffi_\omega)$
satisfies the stationary equation
\begin{equation}\label{ham_stat-0}
E'(\Phi_\omega)+\om Q'(\Phi_\omega)=0.
\end{equation}

We will study the orbital stability of the standing waves \eqref{stwave}, for admissible values
of $\omega\in\R$,
with respect to the symmetry group $\mathbb{S}^1$ acting on $\HH$ through
\begin{equation}\label{action}
T(\theta)(u,v)=e^{i\theta}(u,v), \quad \theta\in\R.
\end{equation}
This group action leaves \eqref{ham} invariant. 
The corresponding notion of orbital stability is the following.
\begin{definition} 
\rm
For a fixed $\omega_0\in\R$, the standing wave $e^{i{\om_0} t}\Phi_{\om_0}$
is {\em orbitally stable} if the following holds: 
for any $\ep>0$ there is a $\d>0$ such that,
if $U(t)$ is a solution of \eqref{ham-0}, then we have
\begin{equation}\label{stability}
\norm{U(0)-\Phi_{\omega_0}}_\HH<\d \implies 
\inf_{\theta\in\real}\norm{U(t)-e^{i\theta}\Phi_{\omega_0}}_\HH<\ep \quad\text{for all} \ t\in\real.
\end{equation}
Otherwise, $\Phi_{\om_0}$ is said to be {\em orbitally unstable}.
\end{definition}

In addition to orbital stability, we will also prove some linear instability results.
Writing a solution $U$ of \eqref{ham} in the form $U(t)=e^{i{\om_0} t}(\Phi_{\om_0}+V(t))$, 
we have that, at first order, $V$ satisfies the linearized equation
\begin{equation} \label{lin}
\Jc \frac{\diff }{\diff t} V(t)= L_{\om_0}'' (\Phi_{\omega_0})V(t),
\end{equation}
where $L_\om$ is defined in \eqref{lyap}.

\begin{definition} 
\rm
The standing wave $e^{i{\om_0} t}\Phi_{\om_0}$ is {\em linearly unstable} if 
$0$ is a linearly unstable solution (in the sense of Lyapunov) of \eqref{lin}.
\end{definition}

In Section~\ref{spectrum.sec}, we will carry out a stability analysis based on
the energy-momentum method developed in \cite{GSS,DeGeRo15,Stuart}. More precisely,
our proofs will make use of the well-known {\em slope condition} (also known as the
`Vahkitov--Kolokolov criterion'), which states that the standing wave $\Phi_{\omega_0}$
is stable/unstable provided
\begin{equation} \label{slope}
\frac{\diff}{\diff \om}\Big\vert_{\om=\om_0} Q(\Phi_{\om})>0\quad \Big/ \quad \frac{\diff}{\diff \om}\Big\vert_{\om=\om_0} Q(\Phi_{\om})<0,
\end{equation}
where the charge of the standing wave \eqref{stwave} is explicitly given by
\begin{equation}
Q(\Phi_{\om})=Q(\ffi_\om,i\om\ffi_\om)=-\om\norm{\ffi_\om}_{L^2}^2-\frac{\al}{2}|\ffi_\om(0)|^2.
\end{equation}

The stability/instability of $\Phi_\omega$ in fact relies on a subtle combination 
of the slope condition \eqref{slope}
and suitable spectral properties of the linearization of \eqref{ham} 
(see e.g.~\cite[Sec.~10.3]{DeGeRo15} for a detailed discussion in the context of the 
nonlinear Schr\"odinger equation). The spectral conditions are conveniently expressed in terms of
the Lyapunov functional $L_\om:\HH\to\real$ associated with \eqref{ham},
defined by
\begin{equation}\label{lyap}
L_\om(u,v)=E(u,v)+\om Q(u,v).
\end{equation}

Let $\tilde{R}=\text{diag}(R,\mathrm{Id}_{L^2}):\HH\to\HH^*$, 
where $R=-\partial_x^2+1:H^1(\R)\to H^{-1}(\R)$ is the Riesz isomorphism. 
It follows from the results of Sections \ref{ham.sec} and \ref{spectrum.sec} that, for any $\omega\in\R$, 
$L_\omega\in C^2(\HH,\R)$, and that $\tilde{R}^{-1}L_\om''(\Phi_\omega):\HH\to\HH$
is a bounded selfadjoint operator. Let us denote by 
$\sigma(\tilde R^{-1} L_\om''(\Phi_\omega))\subset\R$ its spectrum.
The relevant spectral conditions for stability 
are then formulated as follows.

\begin{itemize}

\item[(S1)]
There exists  $\lambda_\om\in\R$ such that 
$\sigma(\tilde R^{-1} L_\om''(\Phi_\omega))\cap(-\infty,0)=\{-\lambda_\om^2\}$ 
and the subspace $\ker(L_\om''(\Phi_\omega)+\lambda_\om^2\tilde{R})$ is one-dimensional.

\item[(S$1'$)] $\tilde R^{-1} L_\om''(\Phi_\omega)$ has two negative eigenvalues 
(counted with multiplicities): either there exist $\lambda_\om,\mu_\om\in\R$ such that 
$\sigma(\tilde R^{-1} L_\om''(\Phi_\omega))\cap(-\infty,0)=\{-\lambda_\om^2,-\mu_\om^2\}$, 
$\lambda_\o^2 \neq \mu_\o^2$, 
and the subspaces $\ker(L_\om''(\Phi_\omega)+\lambda_\om^2\tilde{R})$ and
$\ker(L_\om''(\Phi_\omega)+\mu_\om^2\tilde{R})$
are both one-dimensional; or there exists $\lambda_\om\in\R$ such that 
$\sigma(\tilde R^{-1} L_\om''(\Phi_\omega))\cap(-\infty,0)=\{-\lambda_\om^2\}$ 
and the subspace $\ker(L_\om''(\Phi_\omega)+\lambda_\om^2\tilde{R})$ has dimension 2.

\item[(S2)] $\ker L_\om''(\Phi_\omega)=\mathrm{span}\{i \Phi_\omega\}$.

\item[(S3)] Apart from the non-positive eigenvalues, $\sigma(\tilde R^{-1} L_\om''(\Phi_\omega))$ 
is positive and bounded away from zero.
\end{itemize}

In the present context, the Cauchy problem \eqref{eq-wave-0} being locally well posed,
the main results of \cite{GSS,DeGeRo15,Stuart} imply that, if the standing wave $\Phi_{\omega_0}$
satisfies (S1)--(S3), then it is orbitally stable/unstable provided \eqref{slope} holds. 
In case (S1) is replaced by (S$1'$), we will discuss linear instability of the standing waves,
by means of results obtained in \cite{Gs2}. 
We shall therefore carry out a thorough spectral analysis to see when conditions (S1)--(S3)
(resp.~(S$1'$)--(S3)) are satisfied, depending on the values of the parameters. By
discussing the slope condition for some values of the parameters, we will then prove 
various stability/instability results in $\HH$ and in the subspace $\HH_{\mathrm{rad}}$
of radial functions.

In this analysis, we shall benefit from the explicit dependence of the solution on the parameters, but the 
calculations required for the slope condition are rather involved. 
This difficulty is reflected in the intricate
form of the results we present in Section~\ref{spectrum.sec} 
and explains why we decided to focus on some regimes and refrained from
attempting a comprehensive analysis. 
Of course, numerics might come in handy to 
discuss the slope condition outside the scope of our analytical results. We conclude this introduction
with the following table, which captures simply what ought to be checked in order to obtain
stability/instability results. The integer $n_\omega$ (resp.~$n_{\omega,\mathrm{rad}}$) 
denotes the number of negative eigenvalues (counted with multiplicities) of the operator 
$\tilde R^{-1} L_\om''(\Phi_\omega)$ in $\HH$ (resp.~$\HH_{\mathrm{rad}}$).\footnote{It 
follows from Proposition~\ref{two_negative} and Remark~\ref{rad}
that $n_{\omega,\mathrm{rad}}=1$ whenever $n_\omega=2$.}

\[
\begin{array}{|c|c|c|}
\hline
& n_\omega = 1 & n_\omega= 2 \quad \text{and} \quad  n_{\omega,\mathrm{rad}}= 1 \\
\hline
\frac {\diff}{\diff \o} Q(\Phi_\o) > 0 &  \text{orbitally stable} & \text{linearly unstable} \\
\hline
\frac {\diff}{\diff \o} Q(\Phi_\o) < 0 & \text{orbitally unstable} & \text{orbitally unstable in $\HH_{\mathrm{rad}}$, 
hence in $\HH$} \\ \hline
\end{array}
\]


\section{Local well-posedness of the Cauchy problem}\label{cauchy.sec}
In this section we discuss the local well-posedness of the Cauchy problem \eqref{eq-wave-0}. 
In order to apply the standard theory of operator semigroups, we reformulate \eqref{eq-wave-0} 
as a first order system on $\HH$. We consider on $\HH$ the operator $\Ac$ 
defined by \eqref{def-Ac}--\eqref{def-DD}. Given $f : \R \to \R$ we set, for $U = (u,v) \in \HH$,
\[
F(U) = \begin{pmatrix} 0 \\ f(u) \end{pmatrix}.
\]
With $f(u)=\abs{u}^{p-1}u$ and $U_0 = (u_0,u_1)$, \eqref{eq-wave-0} can be rewritten as
\begin{equation} \label{eq-wave-Ac}
\begin{cases}
U_t(t) - \Ac U(t) = F(U(t)),\\
U(0) = U_0.
\end{cases}
\end{equation}
We will show that $\Ac$ generates a strongly continuous semigroup on $\HH$, which will allow us to establish the local well-posedness of \eqref{eq-wave-0}.

\begin{definition}
\rm
Let $T\in(0,\infty]$.
\begin{itemize}
\item A strong solution to \eqref{eq-wave-Ac} is a function $U\in C^0([0,T),\DD)\cap C^1([0,T),\HH)$ such that \eqref{eq-wave-Ac} holds on $[0,T)$. We say that $u$ is a strong solution of \eqref{eq-wave-0} on $[0,T)$ if $(u,u_t)$ is a strong solution of \eqref{eq-wave-Ac}.
\item A weak solution of \eqref{eq-wave-Ac} is a function $U\in C^0([0,T), \HH)$ such that, for all $t\in[0,T)$, there holds
\begin{equation} \label{eq-Duhamel}
U(t) = e^{t\Ac} U_0 + \int_0^t e^{(t-s)\Ac} F(U(s)) \, \diff s.
\end{equation}
We say that $u$ is a weak solution of \eqref{eq-wave-0} on $[0,T)$ if $(u,u_t)$ is a weak solution of \eqref{eq-wave-0}.
\end{itemize}
\end{definition}

We begin with a lemma which ensures, in particular, that $\Ac$ is densely defined.

\begin{lemma} \label{lem-DD-dense}
$\DD$ is dense in $\HH$.
\end{lemma}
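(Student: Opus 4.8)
The plan is to show that $\DD$ is dense in $\HH = H^1(\R)\times L^2(\R)$ by exhibiting a rich subspace of $\DD$ and approximating arbitrary elements of $\HH$. The natural strategy is to treat the two coordinates essentially separately, being careful only about the jump condition that couples them at $x=0$.

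First I would observe that $C_c^\infty(\R\setminus\{0\})\times C_c^\infty(\R\setminus\{0\})$ is contained in $\DD$: such a pair $(u,v)$ certainly lies in $\Hds\times H^1$, and since $u$ vanishes identically near $0$ we have $u'(0^+)=u'(0^-)=0$, $u(0)=0$ and $v(0)=0$, so the jump condition $u'(0^+)-u'(0^-)=\g u(0)+i\a v(0)$ holds trivially. Hence it suffices to prove that $C_c^\infty(\R\setminus\{0\})$ is dense in $H^1(\R)$ and in $L^2(\R)$. The $L^2$ statement is standard ($C_c^\infty(\R)$ is dense in $L^2$, and removing a point does not matter). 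For $H^1(\R)$, the potential obstacle is that functions in $C_c^\infty(\R\setminus\{0\})$ vanish near $0$, whereas a general $H^1$ function need not; so one cannot simply invoke density of $C_c^\infty(\R)$. The fix is the one-dimensional capacity argument: pick a cutoff $\chi_n\in C_c^\infty(\R)$ with $\chi_n=1$ on $[-1/n,1/n]$, $\supp\chi_n\subset[-2/n,2/n]$, and $\nr{\chi_n'}_{L^2}=O(n^{-1/2})\to0$ (e.g. $\chi_n(x)=\chi(nx)$ for a fixed bump $\chi$, noting $\nr{\chi_n'}_{L^2}^2=n\nr{\chi'}_{L^2}^2/n^2\cdot\ldots$ — actually $\nr{\chi_n'}_{L^2}^2 = n\int|\chi'|^2$, which does \emph{not} go to zero; the correct one-dimensional construction uses a logarithmic cutoff $\chi_n$ equal to $1$ outside $(-1/n,1/n)$, $0$ near $0$, with $\nr{\chi_n'}_{L^2}\to0$). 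Given $w\in H^1(\R)$, first approximate $w$ by $\phi\in C_c^\infty(\R)$ in $H^1$, then set $\phi_n=(1-\chi_n)\phi$; one checks $\phi_n\to\phi$ in $H^1$ using $\nr{\chi_n\phi}_{L^2}\to0$, $\nr{\chi_n\phi'}_{L^2}\to0$ (dominated convergence) and $\nr{\chi_n'\phi}_{L^2}\le\nr{\chi_n'}_{L^2}\nr{\phi}_\infty\to0$. Since $\phi_n\in C_c^\infty(\R\setminus\{0\})$, this completes the $H^1$ density.

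The main obstacle, then, is precisely this $H^1$ approximation near the origin: in dimension one a point has zero $H^1$-capacity, so it \emph{is} true that $C_c^\infty(\R\setminus\{0\})$ is dense in $H^1(\R)$, but the cutoff must be chosen with care so that its derivative has small $L^2$ norm. I would carry out the standard logarithmic-cutoff construction (or cite it), verify the three limits above, and conclude. Putting the pieces together: $C_c^\infty(\R\setminus\{0\})\times C_c^\infty(\R\setminus\{0\})\subset\DD\subset\HH$ is dense in $\HH$, hence $\DD$ is dense in $\HH$, which is the claim.
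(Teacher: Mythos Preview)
Your approach has a fatal flaw: the claim that $C_c^\infty(\R\setminus\{0\})$ is dense in $H^1(\R)$ is \emph{false} in one dimension. Points have \emph{positive} $H^1$-capacity on the line, not zero. Concretely, since $H^1(\R)\hookrightarrow C(\R)$, the evaluation map $u\mapsto u(0)$ is a continuous linear functional on $H^1(\R)$; hence the closure of $C_c^\infty(\R\setminus\{0\})$ in $H^1(\R)$ is contained in the closed hyperplane $\{u\in H^1(\R):u(0)=0\}$, which is strictly smaller than $H^1(\R)$. No ``logarithmic cutoff'' can rescue this: if $\chi_n\in C_c^\infty(\R)$ satisfies $\chi_n(0)=1$ and $\supp\chi_n\subset[-a_n,a_n]$, then
\[
1=\chi_n(0)=-\int_0^{a_n}\chi_n'(x)\diff x \leq \sqrt{a_n}\,\nr{\chi_n'}_{L^2},
\]
so $\nr{\chi_n'}_{L^2}\geq a_n^{-1/2}\to\infty$ whenever $a_n\to0$. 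The log-cutoff trick works in dimension $d\geq 2$ (where indeed a point has zero $H^1$-capacity), but not here. You actually noticed the scaling computation blows up; the ``correct'' fix you gesture at does not exist.

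The paper proceeds quite differently: it does \emph{not} try to make the approximants vanish at the origin. For the second component it approximates $v$ in $L^2$ by $H^1$ functions $v_n$ with $v_n(0)=0$ (easy, since only $L^2$ convergence is needed). For the first component it takes $u_n\in H^2$ with $u_n\to u$ in $H^1$ and then multiplies by an explicit corrector
\[
\zeta_n(x)=1+\tfrac{\gamma|x|}{2}e^{-nx^2},
\]
which satisfies $\zeta_n'(0^+)-\zeta_n'(0^-)=\gamma$ and $\zeta_n\to1$ in $W^{1,\infty}$. Then $(u_n\zeta_n,v_n)\in\DD$ by construction and converges to $(u,v)$ in $\HH$. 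The key idea you are missing is that the jump condition must be \emph{built in} rather than avoided; forcing everything to vanish at $0$ is too crude and, in one dimension, simply impossible at the $H^1$ level.
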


\begin{proof}
Let $(u,v) \in \HH$. We can consider a sequence $(v_n)$ in $H^1$ such that $v_n(0) = 0$ 
and $v_n \to v$ in $L^2$. We then choose a sequence $(u_n)$ in $H^2$ 
which converges to $u$ in $H^1$. For $n \in \N$ and $x \in \R$, we set 
\[
\z_n(x) = 1 + \frac {\g \abs x} 2 e^{-nx^2}.
\]
We have $\z_n'(0^+) = - \z_n'(0^-) = \frac \g 2$, so 
\[
(u_n \z_n)'(0^+) - (u_n \z_n)'(0^-) = u_n(0) \big( \z_n'(0^+) - \z_n'(0^-) \big) = \g (u_n\z_n)(0).
\]
This proves that $(u_n \z_n, v_n)$ belongs to $\DD$ for all $n \in \N$. Moreover, 
\[
\nr{\z_n -1}_{L^\infty} \limt n {+\infty} 0 \quad \text{and} \quad \nr{\z_n'}_{L^\infty} \limt n {+\infty} 0,
\]
so $\nr{u_n \z_n - u_n}_{H^1} \to 0$, and hence $(\z_n u_n,v_n)$ goes to $(u,v)$ in $\HH$.
\end{proof}


\subsection{Linear evolution in the energy space}
In this subsection we show that the operator $\Ac$ generates a strongly continuous group on $\HH$. We know that if $\Ac$ is skew-adjoint then it generates a one parameter unitary group on $\HH$. Since the notion of skew-adjointness depends on the inner product, we first discuss the choice of a suitable Hilbert structure on $\HH$.

For $\m \geq 0$ we introduce on $\HH$ the quadratic form defined by
\begin{equation} \label{norm-HH-gamma}
\Vert (u,v)\Vert_{\HH, \mu, \gam}^2= \Vert u'\Vert^2_{L^2}+\mu^2\Vert u\Vert^2_{L^2}+\gam|u(0)|^2+\Vert v\Vert^2_{L^2}.
\end{equation}
We denote by $\langle\cdot,\cdot\rangle_{\HH, \mu, \gam}$ the corresponding bilinear form. With $\m = m$ we observe that, for $U = (u,v) \in \DD$,
\begin{equation} \label{eq-AU-U}
\begin{aligned}
\langle \Ac U, U \rangle_{\HH,m,\g}
&=\innp{v'}{u'}_{L^2} + m^2 \innp v u _{L^2} + \g \Re( v(0) \overline u(0) ) + \innp{u''}{v}_{L^2} - m^2 \innp u v _{L^2} \\
&=\innp{v'}{u'}_{L^2} + \g \Re ( v(0) \overline u(0))- \innp{u'}{v'}_{L^2}-\Re( \gamma u(0) \overline v(0) + i\a|v(0)|^2 )\\
&=0.
\end{aligned}
\end{equation}
This makes $\langle\cdot,\cdot\rangle_{\HH, m, \gam}$ a good candidate to be a suitable inner product on $\HH$. However, for negative $\gamma$, it may happen that the corresponding quadratic form takes negative values. In this case we have to choose a larger parameter $\mu$.

\begin{lemma} \label{lem-norme-H1}
Let
\[
\mu_0 = 
\begin{cases}
0 & \text{if } \gamma \geq 0,\\
\frac {\abs{\gamma}} 2 & \text{is } \gamma < 0.
\end{cases}
\]
Then for $\mu > \mu_0$ there exists $C_\mu \geq 1$ such that, for all $u \in H^1$, we have 
\begin{equation} \label{equiv-H1}
C_\mu^{-1} \nr{u}_{H^1}^2 \leq \nr{u'}_{L^2}^2 + \mu^2 \nr{u}_{L^2}^2 
+ \gamma \abs{u(0)}^2 \leq C_\m \nr{u}_{H^1}^2. 
\end{equation}
In particular, the functional $\Vert \cdot \Vert_{\HH, \mu, \gam}$ is a norm on $\HH$, 
equivalent to the usual one.
\end{lemma}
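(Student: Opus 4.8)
The plan is to prove the claimed two-sided bound \eqref{equiv-H1} and then deduce immediately that $\Vert\cdot\Vert_{\HH,\mu,\gam}$ is an equivalent norm on $\HH$. The right-hand inequality is the easy half: since $\mu^2\nr{u}_{L^2}^2\leq\mu^2\nr{u}_{H^1}^2$ and, by the one-dimensional Sobolev embedding $H^1(\R)\hookrightarrow C_0(\R)$, there is an absolute constant $c_S$ with $\abs{u(0)}^2\leq\nr{u}_{L^\infty}^2\leq c_S\nr{u}_{H^1}^2$, we get $\nr{u'}_{L^2}^2+\mu^2\nr{u}_{L^2}^2+\gamma\abs{u(0)}^2\leq(1+\mu^2+\abs{\gamma}c_S)\nr{u}_{H^1}^2$, which gives the upper bound with a harmless choice of $C_\mu$.

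The substantive half is the lower bound, i.e.\ coercivity of the quadratic form. If $\gamma\geq0$ this is trivial, since then $\gamma\abs{u(0)}^2\geq0$ and $\nr{u'}_{L^2}^2+\mu^2\nr{u}_{L^2}^2\geq\min(1,\mu^2)\nr{u}_{H^1}^2$. The real work is the case $\gamma<0$, where the boundary term is negative and must be absorbed by the gradient and $L^2$ terms. The idea I would use is the standard trace-type estimate: for $u\in H^1(\R)$ one has $\abs{u(0)}^2\leq\nr{u}_{L^2}\nr{u'}_{L^2}$, obtained by writing $\abs{u(0)}^2=-\int_0^\infty\frac{d}{dx}\abs{u(x)}^2\,dx\leq 2\int_0^\infty\abs{u}\abs{u'}\,dx$ (or the analogous computation on $(-\infty,0)$, then averaging) and applying Cauchy--Schwarz. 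Hence for any $\eta>0$, Young's inequality gives $\abs{\gamma}\abs{u(0)}^2\leq\abs{\gamma}\bigl(\tfrac{\eta}{2}\nr{u'}_{L^2}^2+\tfrac{1}{2\eta}\nr{u}_{L^2}^2\bigr)$. Choosing $\eta=1/\abs\gamma$ — or more precisely $\eta$ slightly less than $2/\abs\gamma$ so that $\tfrac{\abs\gamma\eta}{2}<1$ — yields
\[
\nr{u'}_{L^2}^2+\mu^2\nr{u}_{L^2}^2+\gamma\abs{u(0)}^2\geq\Bigl(1-\tfrac{\abs\gamma\eta}{2}\Bigr)\nr{u'}_{L^2}^2+\Bigl(\mu^2-\tfrac{\abs\gamma}{2\eta}\Bigr)\nr{u}_{L^2}^2 .
\]
The point is that the threshold comes out exactly at $\mu^2>\gamma^2/4=\mu_0^2$: optimizing over $\eta$, the best coefficient one can keep in front of a full multiple of $\nr{u}_{H^1}^2$ is positive precisely when $\mu^2-\mu_0^2>0$, and then both bracketed coefficients can be made strictly positive, so the form dominates $\min$ of those two positive numbers times $\nr{u}_{H^1}^2$, giving $C_\mu^{-1}\nr{u}_{H^1}^2$ with a constant that blows up as $\mu\searrow\mu_0$.

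The main obstacle — really the only place requiring care — is getting the sharp threshold $\mu_0=\abs\gamma/2$ rather than some larger constant; this forces one to use the sharp trace inequality $\abs{u(0)}^2\leq\nr{u}_{L^2}\nr{u'}_{L^2}$ (with constant $1$, not $2$) and to optimize the Young splitting, rather than using a crude bound. Once \eqref{equiv-H1} is established, the final assertion is immediate: combining it with the trivial identity $\nr{(u,v)}_{\HH,\mu,\gam}^2=\bigl(\nr{u'}_{L^2}^2+\mu^2\nr{u}_{L^2}^2+\gamma\abs{u(0)}^2\bigr)+\nr{v}_{L^2}^2$ and with $\nr{(u,v)}_\HH^2=\nr{u}_{H^1}^2+\nr{v}_{L^2}^2$ shows $\max(C_\mu,1)^{-1}\nr{(u,v)}_\HH^2\leq\nr{(u,v)}_{\HH,\mu,\gam}^2\leq\max(C_\mu,1)\nr{(u,v)}_\HH^2$, so $\Vert\cdot\Vert_{\HH,\mu,\gam}$ is a norm equivalent to $\Vert\cdot\Vert_\HH$; positive definiteness in particular follows from the lower bound.
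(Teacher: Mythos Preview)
Your proof is correct. The overall strategy matches the paper's --- control the point term $\gamma|u(0)|^2$ by a trace inequality and then absorb it into the $H^1$-terms --- but your execution is more self-contained. The paper proves only the non-sharp bound $|\gamma||u(0)|^2\leq 2|\gamma|\|u\|_{L^2}\|u'\|_{L^2}$ and then invokes \cite[Theorem~I.3.1.4]{Albeverio} (the spectral bound $-\partial_x^2+\gamma\delta\geq -\gamma^2/4$) as a black box to get the sharp threshold $\mu_0=|\gamma|/2$, followed by a small-$\epsilon$ interpolation. You instead derive the sharp trace inequality $|u(0)|^2\leq\|u\|_{L^2}\|u'\|_{L^2}$ directly (by averaging the half-line estimates and applying Cauchy--Schwarz for sums), which immediately yields the sharp threshold via Young's inequality without any external reference. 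Your argument is thus slightly more elementary; the paper's has the advantage of making explicit the link to the known spectrum of the delta Schr\"odinger operator.
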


\begin{proof}
For $u\in H^1$ we have
\begin{equation}\label{trace}
\abs \gamma |u(0)|^2 = 2 \abs \gamma \Re\int^0_{-\infty} u(x)\bar{u}'(x)\diff x 
\leq 2 \abs \gamma \Vert u\Vert_{L^2}\Vert u'\Vert_{L^2}
\leq  \frac {\nr{u'}_{L^2}^2} 2 + 2 \gamma ^2 \nr{u}_{L^2}^2.
\end{equation}
This gives in particular the second inequality of \eqref{equiv-H1}. 
By Theorem I.3.1.4 in \cite{Albeverio}, we have
\[
\nr{u'}_{L^2}^2 + \gamma \abs{u(0)}^2 + \mu_0^2 \nr{u}_{L^2}^2 \geq 0
\]
for all $u \in H^1$. Then for $\epsilon > 0$ we have 
\begin{align*}
\nr{u'}_{L^2}^2 + \mu^2 \nr{u}_{L^2}^2 + \gamma \abs{u(0)}^2 
& \geq 2 \epsilon \nr{u'}_{L^2}^2 + 2 \epsilon \gamma \abs{u(0)}^2 + (\m^2- (1-2\epsilon) \m_0^2) \nr{u}_{L^2}^2\\
& \geq \epsilon \nr{u'}_{L^2}^2 + \left( \mu^2 - (1-2\epsilon) \m_0^2 - 4 \epsilon \gamma^2 \right) \nr{u}_{L^2}^2.
\end{align*}
With $\epsilon > 0$ small enough, this gives the first inequality in \eqref{equiv-H1}, and the second statement of the proposition follows.
\end{proof}We intend to prove the following proposition.

\begin{proposition} \label{prop-semigroups}
The operator $\Ac$ generates a $C^0$-semigroup on $\HH$. Moreover, there exist $M \geq 0$ 
and $\b \geq 0$ such that, for all $t \in \R$, we have 
\[
\nr{e^{t\Ac}}_{\Lc(\HH)} \leq M e^{\b |t|}.
\]
\end{proposition}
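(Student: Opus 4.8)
The plan is to establish the generation statement via the Lumer--Phillips / Hille--Yosida theorem, after choosing the right Hilbert structure on $\HH$. First I would fix $\mu > \mu_0$ (with $\mu_0$ as in Lemma~\ref{lem-norme-H1}) so that $\Vert\cdot\Vert_{\HH,\mu,\g}$ is an equivalent norm on $\HH$ coming from the inner product $\langle\cdot,\cdot\rangle_{\HH,\mu,\g}$; since the conclusion about exponential boundedness of $e^{t\Ac}$ is insensitive to passing to an equivalent norm, it suffices to prove the statement with this norm. The point of the choice $\mu = m$ would be ideal (giving $\langle\Ac U,U\rangle = 0$ by the computation \eqref{eq-AU-U}, i.e. $\Ac$ skew-symmetric, hence a candidate for generating a unitary group), but when $\g<0$ and $m \le \mu_0$ we cannot take $\mu = m$, so in general we work with some $\mu > \max(m,\mu_0)$ and only get a one-sided bound.

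The key steps: (1) By Lemma~\ref{lem-DD-dense}, $\DD$ is dense in $\HH$. (2) Compute $\langle \Ac U, U\rangle_{\HH,\mu,\g}$ for $U = (u,v)\in\DD$; redoing \eqref{eq-AU-U} with a general $\mu$ the terms $\innp{v'}{u'}$, $\g\Re(v(0)\overline{u(0)})$ cancel as before, $\innp{u''}{v}$ integrated by parts against the jump condition kills $\innp{u'}{v'}$, $\g\Re(u(0)\overline{v(0)})$ and leaves $-\Re(i\a|v(0)|^2) = 0$, and one is left only with the mismatch term $(\mu^2 - m^2)\,\Re\innp v u_{L^2}$. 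Hence $|\langle\Ac U,U\rangle_{\HH,\mu,\g}| \le |\mu^2 - m^2|\,\Vert v\Vert_{L^2}\Vert u\Vert_{L^2} \le \b\,\Vert U\Vert_{\HH,\mu,\g}^2$ for a suitable $\b \ge 0$ (taking $\b = 0$ when $\mu = m$ is available). So $\Ac - \b$ is dissipative, and applying the same computation to $-\Ac$ shows $-\Ac - \b$ is dissipative, i.e. both $\pm\Ac$ are quasi-dissipative with the same constant. (3) Prove a range condition: for some (equivalently all) $\lambda > \b$, the operator $\lambda - \Ac : \DD \to \HH$ is surjective. Given $(f,g)\in\HH$ one solves $\lambda u - v = f$, $\lambda v - (u'' - m^2 u) = g$, i.e. $v = \lambda u - f$ and $-u'' + (m^2 + \lambda^2) u = g + \lambda f \in L^2$, subject to $u\in\Hds$ and the jump condition $u'(0^+) - u'(0^-) = \g u(0) + i\a v(0) = (\g + i\a\lambda)u(0) - i\a f(0)$. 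This is an elementary one-dimensional ODE problem: solve on $(-\infty,0)$ and $(0,\infty)$ with the decaying exponentials $e^{\mp\sqrt{m^2+\lambda^2}\,|x|}$ via variation of parameters, match $u$ continuously at $0$, and use the one remaining free constant to satisfy the jump condition — the resulting linear system for the matching constants is solvable since $\lambda$ large makes the relevant determinant nonzero. (4) Conclude by the Lumer--Phillips theorem (or Hille--Yosida in the form for quasi-contraction semigroups, as in \cite{CH98}): $\Ac - \b$ generates a $C^0$-semigroup of contractions on $(\HH,\Vert\cdot\Vert_{\HH,\mu,\g})$, hence $\Ac$ generates a $C^0$-semigroup with $\Vert e^{t\Ac}\Vert_{\Lc(\HH)} \le e^{\b t}$ for $t\ge 0$; since $-\Ac - \b$ is also quasi-dissipative with range condition, $-\Ac$ generates a semigroup too, so $\Ac$ in fact generates a $C^0$-group with $\Vert e^{t\Ac}\Vert \le e^{\b|t|}$ for all $t\in\R$. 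Returning to the original norm on $\HH$ only changes $M$, giving $\Vert e^{t\Ac}\Vert_{\Lc(\HH)} \le M e^{\b|t|}$.

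I expect the main obstacle to be step (3), the surjectivity of $\lambda - \Ac$: one must carefully set up the explicit ODE solution across the singular point $x=0$, verify that the matched function indeed lies in $\Hds = H^1(\R)\cap H^2(\R\setminus\{0\})$ (in particular is globally $H^1$, which is where continuity at $0$ is used) and satisfies the jump condition, and check that the $2\times 2$ (or $3\times 3$) linear system encoding continuity and the jump is nonsingular for $\lambda > \b$ — a short but slightly fiddly determinant computation. Everything else is routine: the dissipativity estimate is essentially \eqref{eq-AU-U} with the extra $(\mu^2 - m^2)$ term, and the passage from quasi-dissipative-plus-range to semigroup generation, and then to a group, is standard semigroup theory available in \cite{CH98,engel}. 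A minor point to get right is the role of $\a$: as noted in the introduction it does not affect this argument, because $\Re(i\a|v(0)|^2) = 0$ makes the $\a$-term drop out of the energy identity, and in step (3) it only shifts the jump condition by a bounded amount, not affecting solvability.
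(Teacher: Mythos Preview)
Your proposal is correct and follows the same overall strategy as the paper: choose the $\mu$-dependent inner product, compute $\langle \Ac U,U\rangle_{\HH,\mu,\g} = (\mu^2-m^2)\langle v,u\rangle_{L^2}$ to get quasi-dissipativity of $\pm\Ac$, establish the range condition, and conclude via Hille--Yosida for both $\Ac$ and $-\Ac$.

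The only substantive difference is in step (3). Where you propose to solve the resolvent equation by hand --- variation of parameters with $e^{\mp\sqrt{m^2+\lambda^2}\,|x|}$ on each half-line, then matching at $0$ and checking a determinant --- the paper instead packages this as Lemma~\ref{lem-res-u}: the operator $-\partial_{xx}+(m^2+\lambda^2)+(\g+i\lambda\a)\delta : H^1\to H^{-1}$ is shown to be invertible by Lax--Milgram (coercivity coming directly from Lemma~\ref{lem-norme-H1}), and then a short argument identifies $R(\lambda)(\varphi-\delta\psi)$ with the unique $\Hds$ solution of the jump problem. This yields $u = R(\lambda)(-g-\lambda f - i\a\delta f)$ in one line, with no explicit fundamental solution or matching system. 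Your approach is perfectly valid and more constructive, but the Lax--Milgram route sidesteps the ``slightly fiddly determinant computation'' you anticipate and makes the dependence on the jump data transparent. One small omission in your outline: you should also verify the range condition for $-\Ac$ (i.e.\ surjectivity of $\lambda+\Ac$), though this is entirely analogous.
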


For this we need the following lemma.

\begin{lemma} \label{lem-res-u}
Let $\mu\geq0$ be as in Lemma \ref{lem-norme-H1} and $\lam\geq\sqrt{\mu^2-m^2}$.
\begin{enumerate}[\rm (i)]
\item The bounded operator
\begin{align} \label{op-L2-lambda}
-\partial_{xx}+(m^2+\l^2)+(\g +i\l\a )\d : H^1 \to H\inv
\end{align}
has a bounded inverse, which we denote by $R(\l)$.

\item Let $\f \in L^2$ and $\p \in H^1$. Then $R (\l) (\f - \d \p)$ belongs to $\Hds$. 
It is the unique solution $u$ in $\Hds$ of the problem 
\begin{equation} \label{eq-Helmholtz}
\begin{cases}
-u'' + (m^2 + \l^2) u = \f,\\
u'(0^+) - u'(0^-) = (\g+i\l\a) u(0) + \p(0).
\end{cases}
\end{equation}
\end{enumerate}
\end{lemma}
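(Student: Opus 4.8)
The plan is to treat the two points in turn, starting with (i). The operator in \eqref{op-L2-lambda} is a bounded perturbation, by the rank-one term $(\g+i\l\a)\d$, of the isomorphism $-\partial_{xx}+(m^2+\l^2):H^1\to H^{-1}$. To show it is still invertible I would argue via a coercivity/Lax--Milgram estimate on the associated sesquilinear form $a(u,w)=\innp{u'}{w'}_{L^2}+(m^2+\l^2)\innp{u}{w}_{L^2}+(\g+i\l\a)u(0)\overline{w(0)}$ on $H^1$. Taking $w=u$ and using the trace estimate \eqref{trace} together with the lower bound from Theorem~I.3.1.4 in \cite{Albeverio} (exactly as in the proof of Lemma~\ref{lem-norme-H1}), the real part of $a(u,u)$ is bounded below by $c\nr{u}_{H^1}^2$ once $\l\geq\sqrt{\mu^2-m^2}$, since then $m^2+\l^2\geq\mu^2>\mu_0^2$. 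Hence $a$ is coercive, and the Lax--Milgram theorem yields the bounded inverse $R(\l)$. (One must be mildly careful that $a$ is complex-bilinear here rather than the real form used elsewhere; coercivity of the real part suffices.)

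For (ii), let $\f\in L^2$, $\p\in H^1$ and set $u=R(\l)(\f-\d\p)\in H^1$. By definition of $R(\l)$, $u$ satisfies, in $H^{-1}$,
\[
-u''+(m^2+\l^2)u+(\g+i\l\a)\d u=\f-\d\p .
\]
Away from $x=0$ this reads $-u''+(m^2+\l^2)u=\f$ in the distributional sense on $\R\sm\{0\}$; since $\f\in L^2$ and $u\in H^1\subset L^2$, elliptic regularity gives $u''\in L^2(\R\sm\{0\})$, so $u\in H^2(\R\sm\{0\})$ and thus $u\in\Hds$. In particular $u'$ has well-defined one-sided limits at $0$, and integrating the equation across $x=0$ (i.e.\ testing against a function equal to $1$ near $0$) isolates the $\d$-contributions and produces precisely the jump condition $u'(0^+)-u'(0^-)=(\g+i\l\a)u(0)+\p(0)$. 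Conversely, if $\tilde u\in\Hds$ solves \eqref{eq-Helmholtz}, then reversing this computation shows $\tilde u$ satisfies the same $H^{-1}$ equation as $u$, so $R(\l)^{-1}$ applied to both gives $\tilde u=u$; this is uniqueness.

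I expect the main obstacle to be the careful bookkeeping in the invertibility step (i): one has to check that the rank-one trace term, which is \emph{not} a small perturbation for large $\g$, does not destroy coercivity, and this is exactly where the hypothesis $\l\geq\sqrt{\mu^2-m^2}$ with $\mu>\mu_0$ enters — it is the same mechanism that was used in Lemma~\ref{lem-norme-H1} to make $\nr{\cdot}_{\HH,\mu,\g}$ a genuine norm. The regularity bootstrap and the derivation of the jump condition in (ii) are routine once (i) is in place; the only point requiring a little care there is that the identity defining $R(\l)$ holds as an equation in $H^{-1}$, so one must legitimately localize it near $x=0$ to read off the jump.
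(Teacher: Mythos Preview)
Your proposal is correct and follows essentially the same route as the paper: Lax--Milgram via coercivity of the form (using Lemma~\ref{lem-norme-H1} and the fact that the $i\l\a$ term drops out of the real pairing since $\Re(i\l\a|u(0)|^2)=0$) for part~(i), and then testing the weak identity first against $w\in C_0^\infty(\R\setminus\{0\})$ to get $u\in\Hds$ and the equation, then against $w$ with $w(0)=1$ to read off the jump, for part~(ii). The only cosmetic difference is that the paper runs the uniqueness direction of~(ii) first (assuming a solution in $\Hds$ exists and showing it must equal $R(\l)(\f-\d\p)$) and then the existence direction, whereas you do them in the opposite order; the content is identical.
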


\begin{proof}
By Lemma~\ref{lem-norme-H1} we have, for all $u\in H^1$,
\begin{align*}
\langle (-&\partial_{xx}+(m^2+\l^2)+(\g+i\l\a)\d)u,u \rangle_{H\inv, H^1}
\geq \|u'\|^2_{L^2}+\mu^2\|u\|^2_{L^2}+\g|u(0)|^2\gtrsim \nr{u}_{H^1}^2.
\end{align*}
Similarly, for all $u$ and $v$ in $H^1$,
\[
\langle (-\partial_{xx}+(m^2+\l^2)+(\g+i\l\a)\d)u,v \rangle_{H\inv, H^1}\lesssim \nr{u}_{H^1}\nr{v}_{H^1}.
\]
Hence, the operator \eqref{op-L2-lambda} has a bounded inverse by the Lax--Milgram Lemma. 

Let us assume that \eqref{eq-Helmholtz} has a solution $u \in\Hds$. For all $w\in H^1$ we have
\[
\langle -u'',w \rangle_{L^2} +(m^2+\l^2)\langle u,w \rangle_{L^2}=\langle \f,w \rangle_{L^2}.
\]
Integrating by parts and using the jump condition \eqref{eq-Helmholtz}, we get
\begin{equation} \label{eq-H1-Hinv}
\langle u', w'\rangle_{L^2} +(m^2+\l^2)\langle u,w \rangle_{L^2} +\Re((\g+i\l\a)u(0)\bar{w}(0))
=\langle \f,w\rangle_{L^2}-\Re(\p(0)\bar{w}(0)).
\end{equation}
This proves that $u = R(\l) (\f - \d \p)$. Conversely, let $u = R(\l) (\f - \d \p) \in H^1$. 
Then \eqref{eq-H1-Hinv} holds for all $w \in C_0^\infty(\R \setminus \{0\})$, 
so $u$ belongs to $\Hds$ and $-u'' + (m^2+\l^2) u = \f$. 
We now write \eqref{eq-H1-Hinv} with $w \in C_0^\infty(\R)$ such that $w(0) = 1$, 
which yields the jump condition in \eqref{eq-Helmholtz}. 
\end{proof}

We can now prove Proposition \ref{prop-semigroups}.

\begin{proof}[Proof of Proposition \ref{prop-semigroups}]
Consider $\m$ as given by Lemma \ref{lem-norme-H1}. For $U = (u,v) \in \DD$, we have
\begin{align*}
\langle \Ac U, U \rangle_{\HH,\m,\g}
&=\innp{v'}{u'}_{L^2} + \m^2 \innp v u _{L^2} + \g \Re( v(0) \overline u(0) ) 
+ \innp{u''}{v}_{L^2} - m^2 \innp u v _{L^2} \\
&=\innp{v'}{u'}_{L^2} + (\m^2-m^2) \innp v u _{L^2}
+ \g \Re ( v(0) \overline u(0))-\innp{u'}{v'}_{L^2}-\Re \big( \g u(0) \overline v(0) - i\a|v(0)|^2 \big)\\
&=(\m^2-m^2) \innp v u _{L^2} ,
\end{align*}
and so
\begin{align*}
|\langle \Ac U, U \rangle_{\HH,\m,\g}|
=(\m^2-m^2)|\langle u,v\rangle_{L^2}|\leq \frac{\m^2-m^2}{2}(\nr{u}^2_{L^2}+\nr{v}^2_{L^2}).
\end{align*}
On the other hand, by Lemma \ref{lem-norme-H1}, 
\[
\nr{U}_{\HH,\m,\g}^2 \gtrsim \nr{u}_{L^2}^2 + \nr{v}_{L^2}^2.
\]
Hence, fixing $\b \geq 0$ large enough, we have
\begin{equation} \label{eq-Ac-diss}
\innp{(\pm \Ac-\b)U}{U}_{\HH,\m,\g} \leq 0.
\end{equation}
Therefore, by \cite[Proposition 2.4.2]{CH98}, the operators $\pm \Ac-\b$ are dissipative. 
In particular, for $\l > \b$, we have
\begin{equation} \label{minor-Ac-l}
\nr{(\pm\Ac-\l)U}_{\HH,\m,\g}^2 \geq \nr{(\pm\Ac-\b) U}_{\HH,\m,\g}^2 + (\l-\b)^2 \nr{U}_{\HH,\m,\g}^2,
\end{equation}
so that $\pm \Ac-\l$ are injective with closed range. 
Now, let $F = (f,g) \in \HH$. For $U = (u,v) \in \DD$, we have 
\[
(\Ac - \l) U = F \eqv \begin{cases} v = \l u + f, \\ u'' - (m^2 + \l^2) u = g + \l f. \end{cases} 
\]
By Lemma \ref{lem-res-u}, if $\b$ is large enough, there exists $U = (u,v) \in \DD$ such that 
the right-hand side is satisfied. It is given by $u = R(\l) (-g - \l f - i \a \d f)$ and $v = \l u + f$. 
This proves that $\text{Ran} (\Ac - \l) = \HH$. Hence, $(\Ac - \l)$ has a bounded inverse and, by \eqref{minor-Ac-l},
\[
\nr{(\Ac-\l)\inv}_{\Lc(\HH)} \leq \frac 1 {\l-\b}.
\]
By the Hille--Yosida Theorem, this proves that $\Ac$ generates a $C^0$-semigroup on 
$(\HH,\nr\cdot_{\HH,\m,\g})$. Furthermore, for $t \geq 0$ and $U \in \HH$, we have 
\[
\nr{e^{t\Ac}U}_{\HH,\m,\g} \leq e^{\b t} \nr{U}_{\HH,\m,\g}.
\]
Since the norm $\nr{\cdot}_{\HH,\m,\g}$ is equivalent to the usual one, there exists $M \geq 1$ 
such that we also have 
\[
\nr{e^{t\Ac}U}_{\HH} \leq Me^{\b t} \nr{U}_{\HH}.
\]
Now the same holds true with $\Ac$ replaced by $-\Ac$, and the proof is complete.
\end{proof}

\begin{remark}
\rm
If $\g>-m$, we can chose $\m=m$ in Lemma \ref{lem-norme-H1} and \ref{lem-res-u}. 
As in the proof of Proposition \ref{prop-semigroups}, we can show that $\Ac$ is skew-adjoint and, by \cite[Theorem 3.2.3]{CH98}, it now generates a one-parameter unitary group:
\[
\nr{e^{t\Ac} U}_{\HH,m,\g}=\nr{U}_{\HH,m,\g}.
\]
\end{remark}


\subsection{Local well-posedness of the nonlinear problem}
We are now in a position to prove the local well-posedness of 
the Cauchy problem \eqref{eq-wave-0}. 
We suppose that the general nonlinearity $f\in C(\C,\C)$ satisfies the following:
\begin{align}
f(0)&=0,
\\
|f(u)-f(v)|&\leq C_f(1+|u|^{p-1}+|v|^{p-1})|u-v|,
\end{align}
where $p\geq 1$ and $C_f>0$.

\begin{lemma} For any $R > 0$, there exists $C_R \geq 0$ such that, for $u_1,u_2 \in H^1$ 
with $\nr{u_1}_{H^1} \leq R$ and $\nr{u_2}_{H^1} \leq R$, we have 
\begin{align*}
\nr{f(u_1)}_{L^2} & \leq  C_R \nr{u_1}_{H^1},\\
\nr{f (u_1) - f (u_2)}_{L^2} & \leq  C_R \nr{u_1 - u_2}_{H^1}.
\end{align*}
\end{lemma}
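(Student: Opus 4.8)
This is a standard Sobolev-estimate lemma. I would prove both inequalities in one go, the first being essentially the special case $u_2 = 0$ together with $f(0)=0$. The key structural input is the one-dimensional Sobolev embedding $H^1(\R) \hookrightarrow L^\infty(\R)$, which gives a constant $C_\infty > 0$ with $\nr{u}_{L^\infty} \leq C_\infty \nr{u}_{H^1}$ for all $u \in H^1(\R)$.

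First I would treat the difference estimate. Using the growth hypothesis on $f$, pointwise we have
\[
|f(u_1(x)) - f(u_2(x))| \leq C_f \big( 1 + |u_1(x)|^{p-1} + |u_2(x)|^{p-1} \big) |u_1(x) - u_2(x)|.
\]
Bounding the factor in parentheses in $L^\infty$ by $1 + C_\infty^{p-1}(\nr{u_1}_{H^1}^{p-1} + \nr{u_2}_{H^1}^{p-1}) \leq 1 + 2 C_\infty^{p-1} R^{p-1}$, and then taking the $L^2$ norm of what remains, yields
\[
\nr{f(u_1) - f(u_2)}_{L^2} \leq \big( 1 + 2 C_\infty^{p-1} R^{p-1} \big) C_f \nr{u_1 - u_2}_{L^2} \leq C_R \nr{u_1 - u_2}_{H^1},
\]
with $C_R := C_f(1 + 2 C_\infty^{p-1} R^{p-1})$, since $\nr{\cdot}_{L^2} \leq \nr{\cdot}_{H^1}$.

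For the first inequality, apply the growth bound with $v = 0$ and use $f(0) = 0$ to get $|f(u_1(x))| \leq C_f(1 + |u_1(x)|^{p-1})|u_1(x)|$; the same $L^\infty$–$L^2$ splitting gives $\nr{f(u_1)}_{L^2} \leq C_f(1 + C_\infty^{p-1} R^{p-1}) \nr{u_1}_{L^2} \leq C_R \nr{u_1}_{H^1}$ after enlarging $C_R$ if necessary. There is no real obstacle here; the only point requiring a modicum of care is that the constant must depend on $R$ through the factor $R^{p-1}$ coming from the $|u|^{p-1}$ terms, which is precisely why the statement fixes a ball of radius $R$ rather than asserting a global Lipschitz bound — this $R$-dependence is what will later force the fixed-point argument to be local in time.
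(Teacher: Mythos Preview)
Your proof is correct and follows essentially the same approach as the paper: bound the factor $1+|u_1|^{p-1}+|u_2|^{p-1}$ in $L^\infty$ via the one-dimensional Sobolev embedding and then take the $L^2$ norm of the remaining difference, deducing the first inequality by setting $u_2=0$. If anything, you are slightly more careful than the paper, which writes $\nr{u_j}_{L^\infty}\leq R$ without explicitly displaying the Sobolev constant.
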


\begin{proof}
For $j \in \{1,2\}$ we have $\nr{u_j}_{L^\infty} \leq R$, so with $C_f$ we get
\begin{align*}
\nr{f (u_1)-f (u_2)}_{L^2}^2
& \leq \int_\R C_f^2 \big(1 + \abs{u_1(x)}^{p-1} + \abs{u_2(x)}^{p-1} \big)^2 \abs{u_1(x) - u_2(x)}^2 \dif x\\
& \leq C_f^2 (1+2R^{p-1})^2 \nr{u_1-u_2}_{L^2}^2.
\end{align*}
This gives the second inequality. The first one follows by taking $u_2 = 0$.
\end{proof}

\begin{corollary} \label{Lipschitz}
$F:\HH\rightarrow\HH$ is Lipschitz continuous on bounded subsets of $\HH$: for any $R>0$, 
there is a constant $L(R)$ such that, for $U, V \in \HH$ with $\|U\|_\HH\leq R$ and $\|V\|_\HH\leq R$, 
we have
\[
\|F(U)-F(V)\|_\HH\leq L(R)\|U-V\|_\HH.
\] 
\end{corollary}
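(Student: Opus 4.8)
The plan is to reduce the statement about $F:\HH\to\HH$ to the preceding lemma about the scalar nonlinearity $f$ acting on $H^1(\R)$. Recall that $F(U)=(0,f(u))$ for $U=(u,v)\in\HH=H^1(\R)\times L^2(\R)$, so the first component of $F(U)-F(V)$ vanishes identically and only the second component, $f(u)-f(\tilde u)$ where $V=(\tilde u,\tilde v)$, contributes. Hence $\|F(U)-F(V)\|_\HH=\|f(u)-f(\tilde u)\|_{L^2}$, with no contribution from the $L^2$-part of the inner product on $\HH$ nor from the $\tilde v$-component. This is the key simplification: the Lipschitz estimate for $F$ on $\HH$ is \emph{exactly} the $L^2$-Lipschitz estimate for $f$ on $H^1$-balls.

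First I would observe that if $\|U\|_\HH\leq R$ then in particular $\|u\|_{H^1}\leq\|U\|_\HH\leq R$, and likewise $\|\tilde u\|_{H^1}\leq R$; so the hypotheses of the previous lemma are met with the same $R$. Applying that lemma, there is a constant $C_R\geq0$ with
\[
\|f(u)-f(\tilde u)\|_{L^2}\leq C_R\,\|u-\tilde u\|_{H^1}.
\]
Then, since $\|u-\tilde u\|_{H^1}\leq\|U-V\|_\HH$ (the $H^1$-norm of the first component is dominated by the full $\HH$-norm of the pair), I conclude
\[
\|F(U)-F(V)\|_\HH=\|f(u)-f(\tilde u)\|_{L^2}\leq C_R\,\|u-\tilde u\|_{H^1}\leq C_R\,\|U-V\|_\HH,
\]
so the claim holds with $L(R)=C_R$. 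One should also note that $F$ indeed maps $\HH$ into $\HH$: from the lemma (with $u_2=0$, using $f(0)=0$) one gets $\|f(u)\|_{L^2}\leq C_R\|u\|_{H^1}<\infty$ whenever $u\in H^1$, so $F(U)=(0,f(u))\in H^1(\R)\times L^2(\R)=\HH$.

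There is essentially no obstacle here: the corollary is a direct unpacking of the definition of $F$ combined with the already-established scalar estimate, and the only thing to be slightly careful about is which norms dominate which (namely that both the $H^1$-norm of the first slot and the $L^2$-norm of the second slot are controlled by $\|\cdot\|_\HH$, and that only the second slot of $F$ is nonzero). If one wished to phrase the proof without invoking a specific constant name, one could simply write ``take $L(R)=C_R$, where $C_R$ is the constant furnished by the preceding lemma,'' and the proof is complete in two lines.
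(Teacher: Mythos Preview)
Your proof is correct and is exactly the argument the paper has in mind: the corollary is stated without proof there, as an immediate consequence of the preceding lemma, and your unpacking of $\|F(U)-F(V)\|_\HH=\|f(u)-f(\tilde u)\|_{L^2}$ together with $\|u\|_{H^1}\leq\|U\|_\HH$ is precisely the intended two-line justification.
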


\begin{lemma} \label{uniqueness}
Let $T>0$, $U_0\in \HH$, and $U, V \in C([0,T],\HH)$ be two solutions to \eqref{eq-Duhamel}. Then $U=V$.
\end{lemma}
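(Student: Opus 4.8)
The plan is to prove uniqueness by a standard Gronwall argument on the Duhamel formulation. First I would fix $T>0$, $U_0\in\HH$, and two solutions $U,V\in C([0,T],\HH)$ of \eqref{eq-Duhamel}. Since $U$ and $V$ are continuous on the compact interval $[0,T]$, they are bounded: set $R=\max\{\sup_{t\in[0,T]}\nr{U(t)}_\HH,\ \sup_{t\in[0,T]}\nr{V(t)}_\HH\}$. Subtracting the two Duhamel identities and using that the $e^{t\Ac}U_0$ term cancels, we get, for $t\in[0,T]$,
\[
U(t)-V(t)=\int_0^t e^{(t-s)\Ac}\big(F(U(s))-F(V(s))\big)\,\diff s.
\]

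Next I would estimate the $\HH$-norm. By Proposition~\ref{prop-semigroups} there are $M\geq0$, $\b\geq0$ with $\nr{e^{\t\Ac}}_{\Lc(\HH)}\leq Me^{\b|\t|}$, so for $0\leq s\leq t\leq T$ we have $\nr{e^{(t-s)\Ac}}_{\Lc(\HH)}\leq Me^{\b T}=:M_T$. Combining this with the local Lipschitz bound for $F$ from Corollary~\ref{Lipschitz} (with constant $L(R)$), we obtain
\[
\nr{U(t)-V(t)}_\HH\leq M_T L(R)\int_0^t \nr{U(s)-V(s)}_\HH\,\diff s.
\]
The map $t\mapsto\nr{U(t)-V(t)}_\HH$ is continuous on $[0,T]$, hence locally integrable, so Gronwall's inequality applies and forces $\nr{U(t)-V(t)}_\HH=0$ for all $t\in[0,T]$; that is, $U=V$.

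There is no real obstacle here: the only mild points to be careful about are that the constant $R$ must be chosen \emph{after} the two solutions are fixed (so that $L(R)$ makes sense for both trajectories simultaneously), and that the semigroup bound is used on the bounded time interval $[0,T]$ where it gives a uniform operator-norm bound $M_T$. Everything else is the classical contraction/Gronwall mechanism underlying the fixed-point proof of local well-posedness, applied here just to deduce uniqueness.
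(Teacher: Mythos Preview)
Your proof is correct and follows essentially the same approach as the paper: fix a common bound $R$ for both trajectories, subtract the Duhamel formulas, combine the semigroup estimate from Proposition~\ref{prop-semigroups} with the local Lipschitz bound from Corollary~\ref{Lipschitz}, and conclude with Gronwall's lemma.
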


\begin{proof}
Let us set $R= \sup_{t\in[0,T]}\max\{ \nr{U(t)}_\HH, \nr{V(t)}_\HH\}$. For $t \in [0,T]$ we have
\[
\nr{U(t)-V(t)}_\HH \leq \int_0^t \nr{ e^{(t-s)\Ac}\Big(F(U(s))-F(V(s))\Big) }_\HH \diff s \leq Me^{T\b} L(R) \int_0^t \nr{ U(s)-V(s) }_\HH \diff s.
\]
By Gronwall's Lemma we conclude that $U(t)=V(t)$ for all $t \in [0,T]$.
\end{proof}

In the next proposition we prove the existence of a weak solution to the Cauchy problem. 

\begin{proposition} \label{existence}
Take $R>0$ and $U_0\in\HH$ such that $\|U_0\|_\HH\leq R$. 
Then there exists $T_R>0$ and a unique solution $U\in C([0,T_R),\HH)$ of problem \eqref{eq-Duhamel}. 
\end{proposition}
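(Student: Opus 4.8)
The plan is to prove local existence by a standard contraction mapping argument applied to the Duhamel map, using the exponential boundedness of the semigroup $e^{t\Ac}$ established in Proposition~\ref{prop-semigroups} together with the local Lipschitz bound on $F$ from Corollary~\ref{Lipschitz}. Uniqueness in $C([0,T_R),\HH)$ is already granted by Lemma~\ref{uniqueness}, so only existence needs to be shown.

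First I would fix $R>0$ and $U_0$ with $\|U_0\|_\HH\le R$, and let $M\ge 1$, $\b\ge 0$ be the constants from Proposition~\ref{prop-semigroups}, so that $\|e^{t\Ac}\|_{\Lc(\HH)}\le Me^{\b t}$ for $t\ge 0$. Then, on a time interval $[0,T]$ still to be chosen, I would work in the complete metric space
\[
X_T = \left\{ U \in C([0,T],\HH) : \sup_{t\in[0,T]}\|U(t)\|_\HH \le 2MR \right\},
\]
with the metric induced by the sup norm, and define for $U\in X_T$ the map
\[
(\Phi U)(t) = e^{t\Ac}U_0 + \int_0^t e^{(t-s)\Ac}F(U(s))\,\diff s.
\]
Writing $L=L(2MR)$ for the Lipschitz constant of $F$ on the ball of radius $2MR$ from Corollary~\ref{Lipschitz}, and noting $F(0)=0$ so that $\|F(U(s))\|_\HH\le L\|U(s)\|_\HH\le 2MRL$, one gets for $t\in[0,T]$
\[
\|(\Phi U)(t)\|_\HH \le Me^{\b T}R + Me^{\b T}\,2MRL\,T,
\]
and for $U,V\in X_T$,
\[
\sup_{t\in[0,T]}\|(\Phi U)(t)-(\Phi V)(t)\|_\HH \le Me^{\b T} L\,T\,\sup_{t\in[0,T]}\|U(t)-V(t)\|_\HH.
\]
Continuity of $t\mapsto(\Phi U)(t)$ in $\HH$ follows from strong continuity of the semigroup and continuity of the integrand. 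Thus, choosing $T_R>0$ small enough that $Me^{\b T_R}(1 + 2MRL\,T_R)\le 2M$ (which holds, e.g., once $Me^{\b T_R}\le\tfrac32 M$ and $2M^2RL e^{\b T_R}T_R\le\tfrac12 M$) and simultaneously $Me^{\b T_R}L\,T_R\le\tfrac12$, the map $\Phi$ sends $X_{T_R}$ into itself and is a contraction there. The Banach Fixed Point Theorem then yields a unique fixed point $U\in X_{T_R}$, which is the desired solution of \eqref{eq-Duhamel} on $[0,T_R)$; uniqueness in the full space $C([0,T_R),\HH)$ is Lemma~\ref{uniqueness}.

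There is no serious obstacle here, as all the analytic ingredients have been prepared: the only mildly delicate point is bookkeeping the two smallness conditions on $T_R$ so that both the self-mapping and the contraction properties hold simultaneously, and making sure the radius $2MR$ of the ball is large enough to absorb the factor $Me^{\b T}$ coming from the non-unitarity of the semigroup — this is precisely where the present setting departs from the classical unperturbed case, in which $M=1$ and $\b=0$ would allow the cleaner choice of radius $2R$. One should also record that $T_R$ depends only on $R$ (through $M$, $\b$, and $L(2MR)$), which is what the statement asserts and what is needed for the blow-up alternative in the sequel.
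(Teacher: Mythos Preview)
Your proof is correct and follows essentially the same contraction-mapping argument as the paper's own proof: the only cosmetic difference is that the paper works in the ball of radius $3MR$ (with Lipschitz constant $L(3MR)$ and contraction factor $\le 1/3$), whereas you use $2MR$. Both choices work once $T_R$ is taken small enough, and your remark that $T_R$ depends only on $R$ is exactly what is used downstream.
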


\begin{proof}
We only need to prove the existence of the solution, as uniqueness follows from Lemma \ref{uniqueness}. Let $M$ and $\b$ be as in Proposition \ref{prop-semigroups}.  
Consider $U_0 \in \HH$ such that $\|U_0\|_\HH\leq R$. For $T>0$ to be determined later, let
\[
X:=\{ U\in C([0,T],\HH) : \|U(t)\|_\HH\leq 3MR \ \forall t\in [0,T] \}
\]
and
\[
d(U,V):=\max_{t\in [0,T]} \| U(t)-V(t)\|_\HH. 
\]
Then $(X,d)$ is a complete metric space. 
We now define a map $\Psi:X\rightarrow C([0,T],\HH)$ by
\[
\Psi(U) : t \mapsto  e^{t\Ac}U_0+\int_0^t e^{(t-s)\Ac} F(U(s)) \diff s.
\]
Note that, for all $s\in[0,T]$, we have $\| F(U(s)) \|_\HH\leq 3MR \, L(3MR) $ 
by Corollary \ref{Lipschitz}. Thus,
\begin{align}
\|\Psi(U)(t) \|_\HH &\leq \| e^{t\Ac} U_0 \|_\HH + \int_0^t \| e^{(t-s)\Ac} F(U(s))\|_\HH \diff s \nonumber
\\
&\leq Me^{T \b}R+Me^{T \b} \int_0^t \| F(U(s)) \|_\HH  \diff s \nonumber
\\
&\leq Me^{T \b}R+Me^{T \b} 3MR \, L(3MR) \, T.
\end{align}
Furthermore, for $U, V \in E$, we have
\begin{align*}
\nr{ \Psi(U)(t)-\Psi(V)(t)}_\HH &\leq M e^{T \b} \int_0^t \nr{F(U(s))-F(V(s))}_\HH \diff s
\\
&\leq Me^{T\b} L(3MR)\, T\, d(U,V).
\end{align*}
It is now straightforward to check that, if $T=T_R>0$ is chosen small enough, there holds
\begin{align*}
Me^{T \b}R+&Me^{T \b} 3MR \, L(3MR)T \leq 3MR,
\\
M&e^{T \b} L(3MR) T \leq 1/3.
\end{align*}
This shows that $\Psi$ maps $(X,d)$ to itself and is a contraction.
The result now follows from the Fixed Point Theorem.
\end{proof}

\begin{theorem} \label{blow-up}
There exists a function $T:\HH\rightarrow (0,\infty]$ with the following properties. For all $U_0\in \HH$, 
there exists a function $U\in C([0,T(U_0)),\HH)$ such that, for all $0<T<T(U_0)$, 
$U$ is the unique solution of \eqref{eq-Duhamel} in $C([0,T],\HH)$. 
Furthermore, the blow-up alternative holds: if $T(U_0)<\infty$ then 
$\lim_{t\uparrow T(U_0)}\nr{U(t)}_\HH=\infty$. 
\end{theorem}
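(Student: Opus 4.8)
The plan is to build the solution by the standard maximal-existence-time construction from the local existence result (Proposition~\ref{existence}) together with uniqueness (Lemma~\ref{uniqueness}), and then to derive the blow-up alternative by a contradiction argument. First I would define $T(U_0)$ as the supremum of all $T>0$ for which a solution $U\in C([0,T],\HH)$ of \eqref{eq-Duhamel} exists; Proposition~\ref{existence} guarantees this set is non-empty, so $T(U_0)\in(0,\infty]$. Next I would check that the solution is well-defined on $[0,T(U_0))$: given $0<T_1<T_2<T(U_0)$ with solutions $U_1$ on $[0,T_1]$ and $U_2$ on $[0,T_2]$, Lemma~\ref{uniqueness} applied on $[0,T_1]$ forces $U_1=U_2$ there, so the solutions on the various intervals glue consistently into a single $U\in C([0,T(U_0)),\HH)$, which by construction is the unique solution of \eqref{eq-Duhamel} in $C([0,T],\HH)$ for every $T<T(U_0)$.

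It remains to prove the blow-up alternative. Suppose, for contradiction, that $T^*:=T(U_0)<\infty$ but $\liminf_{t\uparrow T^*}\nr{U(t)}_\HH<\infty$; pick $t_n\uparrow T^*$ with $\nr{U(t_n)}_\HH\leq R$ for some $R>0$ and all $n$. Apply Proposition~\ref{existence} with initial datum $U(t_n)$: there is a time $T_R>0$, \emph{independent of $n$}, and a solution of \eqref{eq-Duhamel} starting from $U(t_n)$ on $[0,T_R]$. Choosing $n$ large enough that $t_n+T_R>T^*$, one concatenates this with $U$ on $[0,t_n]$ — using the semigroup property $e^{(t-s)\Ac}=e^{(t-t_n)\Ac}e^{(t_n-s)\Ac}$ and a change of variables in the Duhamel integral to verify the glued function still solves \eqref{eq-Duhamel} — to obtain a solution on $[0,t_n+T_R]$ with $t_n+T_R>T^*$, contradicting the maximality of $T^*$. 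Hence $\lim_{t\uparrow T^*}\nr{U(t)}_\HH=\infty$. (Strictly, the $\liminf$ argument already gives the full $\lim$, since if the $\liminf$ were finite the solution would extend.)

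The main obstacle, though largely technical, is the gluing step: one must verify carefully that the function obtained by patching the solution on $[0,t_n]$ with the new solution emanating from $U(t_n)$ genuinely satisfies the integral equation \eqref{eq-Duhamel} on the whole enlarged interval, which is where the semigroup property of $e^{t\Ac}$ (Proposition~\ref{prop-semigroups}) and the translation invariance of the Duhamel term enter. The other point requiring a little care is that $T_R$ in Proposition~\ref{existence} depends only on the bound $R$ and not on the particular initial datum, which is exactly how that proposition is stated, so the uniform-time argument goes through. Everything else — non-emptiness, consistency of the glued solution on $[0,T^*)$, uniqueness on each $[0,T]$ — is immediate from the results already established.
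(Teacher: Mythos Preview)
Your proposal is correct and follows essentially the same approach as the paper's proof: define $T(U_0)$ as the supremum of existence times, use Proposition~\ref{existence} for non-emptiness and Lemma~\ref{uniqueness} to glue a maximal solution, then argue the blow-up alternative by contradiction via a bounded subsequence and the uniform local existence time $T_R$. You are somewhat more explicit than the paper about the gluing step (invoking the semigroup property to verify the concatenated function still satisfies \eqref{eq-Duhamel}) and about why $\liminf=\infty$ forces $\lim=\infty$, but the argument is the same.
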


\begin{proof}
For all $U_0\in\HH$, we set
\[
T(U_0)=\sup \{ T>0 : \exists U\in C([0,T],\HH) \text{ solution to } \eqref{eq-Duhamel}\}.
\]
From Proposition~\ref{existence}, we know that $T(U_0)> T_{\nr{U_0}} >0$ and Lemma \ref{uniqueness} allows us to extend it to a maximal solution $U\in C([0,T(U_0)],\HH)$. 
The blow-up alternative follows from an argument by contradiction. 
Suppose that $T(U_0)<\infty$ and that there exists a constant $C$ and a sequence 
$t_n$ in $[0,T(U_0))$ such that $t_n\uparrow T(U_0)$ and $\sup_{n\in\N}\nr{U(t_n)}_\HH\leq C$. 
Now take a time $t_n$ such that $t_n+T_C>T(U_0)$. Using Lemma~\ref{uniqueness} and 
Proposition~\ref{existence}, we can extend the solution up to $t_n+T_C$ by considering 
the initial value problem \eqref{eq-Duhamel} with initial value $U(t_n)$. 
This contradicts the definition of $T(U_0)$ and concludes the proof.
\end{proof} 

\begin{theorem} \label{semicontinuity}
Following the notation of Theorem \ref{blow-up}, we have the following properties:
\begin{enumerate}[\rm (i)]
\item 
$T:\HH\rightarrow (0,\infty]$ is lower semicontinuous: given the initial conditions $U_0, U_{0,n} \in\HH$ such that $U_{0,n}$ converges to $U_0$ in $\HH$, we have that
\[
T(U_0)\leq \liminf_{n\rightarrow\infty} T(U_{0,n}).
\]
\item 
If $U_{0,n}\rightarrow U_0$ and if $T<T(U_0)$, then $U_n\rightarrow U$ in $C([0,T],\HH)$, where $U_n$ and $U$ are the solutions of \eqref{eq-Duhamel} corresponding to the initial data $U_{0,n}$ and $U_0$.
\end{enumerate}
\end{theorem}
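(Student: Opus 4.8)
The plan is to deduce both assertions from a single Gronwall comparison between $U_n$ and $U$, combined with the blow-up alternative of Theorem~\ref{blow-up} and a continuity (bootstrap) argument. Fix $U_0\in\HH$ and $T<T(U_0)$. Since $U\in C([0,T],\HH)$, the number $R_0:=\sup_{t\in[0,T]}\nr{U(t)}_\HH$ is finite; set $R=R_0+1$. As $U_{0,n}\to U_0$ in $\HH$, for $n$ large we have $\nr{U_{0,n}}_\HH\leq R$, so Proposition~\ref{existence} and Theorem~\ref{blow-up} furnish a maximal solution $U_n\in C([0,T(U_{0,n})),\HH)$, and it suffices to work with such $n$.

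For the comparison estimate, define for each such $n$
\[
\tau_n=\sup\bigl\{\,t\in[0,\min(T,T(U_{0,n})))\st \nr{U_n(s)}_\HH\leq 2R \text{ for all } s\in[0,t]\,\bigr\}.
\]
On $[0,\tau_n)$ we subtract the Duhamel formulas \eqref{eq-Duhamel} for $U$ and $U_n$, use the bound $\nr{e^{r\Ac}}_{\Lc(\HH)}\leq Me^{\b r}$ from Proposition~\ref{prop-semigroups} and the Lipschitz estimate of Corollary~\ref{Lipschitz} with constant $L(2R)$, to get
\[
\nr{U(t)-U_n(t)}_\HH\leq Me^{\b T}\nr{U_0-U_{0,n}}_\HH+Me^{\b T}L(2R)\int_0^t\nr{U(s)-U_n(s)}_\HH\diff s .
\]
Gronwall's Lemma then yields, with $K:=Me^{\b T}\exp\!\bigl(Me^{\b T}L(2R)T\bigr)$ independent of $n$,
\[
\nr{U(t)-U_n(t)}_\HH\leq K\nr{U_0-U_{0,n}}_\HH,\qquad t\in[0,\tau_n).
\]

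To close the bootstrap, choose $n$ so large that $K\nr{U_0-U_{0,n}}_\HH<1$. Then for $t\in[0,\tau_n)$ we have $\nr{U_n(t)}_\HH\leq\nr{U(t)}_\HH+1\leq R_0+1=R<2R$; by continuity of $t\mapsto\nr{U_n(t)}_\HH$ this strict inequality forces $\tau_n=\min(T,T(U_{0,n}))$, since otherwise the defining set of $\tau_n$ could be enlarged past $\tau_n$. If we had $T(U_{0,n})\leq T$, then $\tau_n=T(U_{0,n})$ and $\nr{U_n(t)}_\HH\leq R$ on $[0,T(U_{0,n}))$, contradicting the blow-up alternative of Theorem~\ref{blow-up}. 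Hence $T(U_{0,n})>T$ for all large $n$, so $\liminf_{n\to\infty}T(U_{0,n})\geq T$; letting $T\uparrow T(U_0)$ gives (i). Moreover $\tau_n=T$, so the Gronwall bound holds on all of $[0,T]$, whence $\sup_{t\in[0,T]}\nr{U_n(t)-U(t)}_\HH\leq K\nr{U_0-U_{0,n}}_\HH\to 0$, i.e.\ $U_n\to U$ in $C([0,T],\HH)$, which is (ii).

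The only real obstacle is that the local existence time $T_R$ produced by Proposition~\ref{existence} may be much smaller than $T$, so one cannot simply iterate the fixed-point construction a bounded number of times with uniform control. The bootstrap above circumvents this: the a priori bound $R_0$ on $U$ together with the continuity of $\nr{U_n(\cdot)}_\HH$ lets one propagate the smallness of $\nr{U-U_n}_\HH$ across the entire interval $[0,T]$ at once. Everything else reduces to the routine combination of Duhamel's formula, the semigroup bound of Proposition~\ref{prop-semigroups}, Corollary~\ref{Lipschitz}, and Gronwall's inequality.
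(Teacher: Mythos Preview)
Your proof is correct and follows essentially the same strategy as the paper: define a bootstrap time $\tau_n$ on which $U_n$ is a priori bounded, apply the Duhamel--Gronwall comparison to control $\nr{U-U_n}_\HH$ uniformly there, and then close the bootstrap (using the blow-up alternative) to show $\tau_n$ reaches $T$. The only differences are cosmetic: the paper takes $R=2\sup_{[0,T]}\nr{U(t)}_\HH$ rather than $R=\sup+1$, and it phrases the conclusion $\tau_n\geq T\Rightarrow T(U_{0,n})>T$ slightly more tersely, whereas you invoke the blow-up alternative explicitly.
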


\begin{proof}
Let $U_0\in\HH$ and $U\in C([0,T(U_0)),\HH)$ be the solution of \eqref{eq-Duhamel} 
given by Theorem~\ref{blow-up}. Let $0<T<T(U_0)$. 
It suffices to show that, if $U_{0,n}\rightarrow U_0$ then $T(U_{0,n})>T$ for $n$ large enough, 
and $U_n\rightarrow U$ in $C([0,T],\HH)$. We set
\[
R=2\sup_{t\in [0,T]} \nr{U(t)}_\HH,
\]
and
\[
\t_n=\sup\{t\in [0, T(U_{0,n})) : \nr{U_n(s)}_\HH \leq 2R \ \forall s \in [0,t]\}.
\]
If $n$ is large enough, we have $\nr{U_{0,n}}_\HH\leq R$. Hence, by Proposition~\ref{existence}, 
$0<T_R<\t_n$. Now, for all $0<t\leq \min\{ \t_n, T \}$,
\begin{align*}
\nr{U(t)-U_n(t)}_\HH &\leq \nr{e^{t\Ac} (U_0-U_{0,n}) }_\HH + \int_0^{t} \nr{e^{(t-s)\Ac}(F(U(s))-F(U_n(s)))}_\HH \diff s
\\
&\leq Me^{T\b}\nr{U_0-U_{0,n}}_\HH + Me^{T\b} L(2R) \int_0^{T} \nr{ U(s)-U_n(s) }_\HH \diff s.
\end{align*}
Therefore, by Gronwall's lemma,
\begin{equation} \label{gronwall}
\nr{U(t)-U_n(t)}_\HH \leq Me^{T\b} \nr{U_0-U_{0,n}}_\HH e^{{Me^{T\b}}L(2R) T}
\end{equation}
for all $t\leq \min \{T,\t_n\}$. In particular, if $n$ is large enough,
\[
\nr{U_n(t)}_\HH \leq R
\]
for $t\leq \min\{T,\t_n\}$. Hence $\t_n\geq T$, which implies that $T(U_{0,n})>T$. 
From \eqref{gronwall} we also see that $U_n\rightarrow U$ in $C([0,T],\HH)$, which completes the proof. 
\end{proof}

\begin{theorem}
Let $U_0\in\DD$ and $T \in (0,T(U_0))$. 
Let $U\in C([0,T],\HH)$ be the corresponding solution of \eqref{eq-Duhamel}. Then 
$U\in C([0,T],\mathfrak{D})\cap C^1([0,T],\HH)$.
\end{theorem}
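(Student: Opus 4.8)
The plan is to follow the classical route for upgrading a weak (mild) solution to a strong one when the initial datum lies in the domain of the generator: first show that $U$ is Lipschitz continuous in time, then deduce that $s\mapsto F(U(s))$ is Lipschitz, and finally invoke the regularity theory for the inhomogeneous \emph{linear} equation with $\DD$-valued initial data and Lipschitz (hence $W^{1,1}$) forcing term. For the setup, since $T<T(U_0)$ I would fix $T'\in(T,T(U_0))$; by Theorem~\ref{blow-up}, $U\in C([0,T'],\HH)$, so that $R:=\sup_{[0,T']}\nr{U}_\HH<\infty$ and, by Corollary~\ref{Lipschitz}, $K:=\sup_{[0,T']}\nr{F(U(t))}_\HH<\infty$. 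I denote by $M,\b$ the constants of Proposition~\ref{prop-semigroups} and by $L(R)$ the Lipschitz constant of Corollary~\ref{Lipschitz}.

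\emph{Step 1 (the crux): Lipschitz continuity of $U$ on $[0,T]$.} For $t\in[0,T]$ and $0<h\leq T'-T$, splitting Duhamel's integral and substituting variables gives
\[
U(t+h)-U(t)=e^{t\Ac}(e^{h\Ac}-\Id)U_0+\int_0^h e^{(t+h-s)\Ac}F(U(s))\diff s+\int_0^t e^{(t-\sigma)\Ac}\big(F(U(\sigma+h))-F(U(\sigma))\big)\diff\sigma.
\]
The first term is $O(h)$ \emph{precisely because $U_0\in\DD$}: one has $(e^{h\Ac}-\Id)U_0=\int_0^h e^{s\Ac}\Ac U_0\diff s$, so its norm is $\leq Me^{\b T'}\nr{\Ac U_0}_\HH\,h$. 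The second term is $O(h)$ using $\nr{e^{r\Ac}}_{\Lc(\HH)}\leq Me^{\b r}$ and the bound $K$. The third is estimated by Corollary~\ref{Lipschitz} (with $U$ staying in the ball of radius $R$), yielding an inequality of the form $\phi(t)\leq C_1 h+C_2\int_0^t\phi(\sigma)\diff\sigma$ with $\phi(t)=\nr{U(t+h)-U(t)}_\HH$ and $C_1,C_2$ independent of $h$ and $t$; Gronwall's Lemma then gives $\phi(t)\leq C_1 e^{C_2 T'}h$ on $[0,T]$. This is the step that breaks the apparent circularity — one would like $U(t)\in\DD$ in order to bound $\nr{(e^{h\Ac}-\Id)U(t)}_\HH$ by $O(h)$, but that is exactly what we are trying to prove — and I expect it to be the main obstacle: the only input is $U_0\in\DD$, and Gronwall propagates the $O(h)$ bound from $t=0$ to all of $[0,T]$.

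\emph{Step 2: Lipschitz continuity of $g:=F\circ U$.} This is immediate from Step~1 and Corollary~\ref{Lipschitz}, since $U([0,T])$ lies in the ball of radius $R$.

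\emph{Step 3: conclusion.} Since $g\in C([0,T],\HH)$ is Lipschitz, it lies in $W^{1,1}((0,T),\HH)$; together with $U_0\in\DD$, the standard regularity theory for the linear inhomogeneous Cauchy problem (see e.g.~\cite[Ch.~4]{CH98}) shows that $t\mapsto e^{t\Ac}U_0+\int_0^t e^{(t-s)\Ac}g(s)\diff s$ belongs to $C([0,T],\DD)\cap C^1([0,T],\HH)$ and solves $U'=\Ac U+g=\Ac U+F(U)$. As $U$ is this function, the claim follows. If a self-contained argument is preferred, one can instead write $g(s)=g(0)+\int_0^s g'(\tau)\diff\tau$ with $g'\in L^\infty((0,T),\HH)$, apply Fubini to get $\int_0^t e^{(t-s)\Ac}g(s)\diff s=\big(\int_0^t e^{r\Ac}\diff r\big)g(0)+\int_0^t\big(\int_0^{t-\tau}e^{r\Ac}\diff r\big)g'(\tau)\diff\tau$, and use the elementary facts that $w\mapsto\int_0^\rho e^{r\Ac}w\diff r$ maps $\HH$ into $\DD$ with $\Ac\int_0^\rho e^{r\Ac}w\diff r=(e^{\rho\Ac}-\Id)w$, together with the closedness of $\Ac$, to conclude that this integral lies in $\DD$; differentiating Duhamel's formula (now that $U(t)\in\DD$) then gives $U\in C^1([0,T],\HH)$ with $U'=\Ac U+F(U)$, and continuity of the right-hand side in $\HH$ yields $U\in C([0,T],\DD)$.
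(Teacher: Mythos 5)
Your proposal is correct and follows essentially the same route as the paper: the same Duhamel splitting of $U(t+h)-U(t)$, the same key use of $U_0\in\DD$ through $(e^{h\Ac}-\Id)U_0=\int_0^h e^{s\Ac}\Ac U_0\,\dif s$, Gronwall to get Lipschitz continuity of $U$ and hence of $F(U)$, and the conclusion via the Cazenave--Haraux regularity results for the inhomogeneous linear problem. The only (harmless) differences are cosmetic: working on $[0,T']$ with $T<T'<T(U_0)$ instead of restricting $t\in[0,T-h)$, and the optional self-contained variant of the final step.
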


\begin{proof} Let $h>0$ and $t\in [0,T-h)$. By a change of variables it is easy to see that
\[
U(t+h)-U(t)=e^{(t+h)\Ac}U_0-e^{t\Ac}U_0 + \int_0^t e^{s\Ac} \big( F(U(t+h-s))-F(U(t-s)) \big) \diff s + \int_0^h e^{(t+h-s)\Ac} F(U(s)) \diff s.
\]
Hence,
\begin{align*}
\nr{U(t+h)-U(t)}_\HH &\leq \nr{e^{t\Ac} (e^{h\Ac}U_0-U_0)}_\HH + \int_0^t\nr{e^{s\Ac}{ \big(F(U(t+h-s))-F(U(t-s))\big) }}_\HH \diff s
\\ 
&+ \int_0^h\nr{e^{(t+h-s)\Ac} F(U(s)) }_\HH \diff s
\\
& \leq Me^{T\b}\nr{e^{h\Ac}U_0-U_0}_\HH + Me^{T\b}L(R)\int_0^t \nr{ U(t+h-s)-U(t-s)}_\HH \diff s
\\
& + Me^{T\b} h\sup_{s\in[0,T]} \nr{F(U(s))}_\HH.
\end{align*}
We know that
\[
e^{h\Ac}U_0-U_0=\int_0^h e^{s\Ac} \Ac U_0 \diff s,
\]
and so $\nr{ e^{h\Ac}U_0-U_0 }_\HH \leq hMe^{\b T} \nr{ \Ac U_0 }_\HH$. Applying Gronwall's Lemma, 
we get
\[
\nr{U(t+h)-U(t) }_\HH \lesssim h
\]
for all $0\leq t<t+h\leq T$. Hence, $U: [0, T]\rightarrow \HH$ and $ F(U): [0,T]\rightarrow \HH $ are Lipschitz continuous. We conclude by \cite[Corollary 1.4.41]{CH98} and \cite[Proposition 4.1.6]{CH98}.
\end{proof}

\begin{remark}
\rm 
It is worth noting that solutions of \eqref{eq-wave-0} may blow up in finite time. To this aim, 
let us consider the ordinary differential equation
\begin{equation}\label{ode}
v''(t)+v(t)-v^3(t)=0.
\end{equation}
For any fixed $T>0$, this equation has the solution 
\begin{equation}\label{ode_blowup}
v(t)=\frac{1}{\tanh\left(T-t/\sqrt{2}\right)},
\end{equation}
which blows up at time $\sqrt{2}T$.
Now, consider \eqref{eq-wave-0} with $m=1$, $\gamma=\alpha=0$ and $p=3$, and choose
the constant initial data $u_0=1/\tanh\left(T\right)$. By finite speed of propagation (see e.g.~\cite{CH98}), 
if $u_0$ is smoothly truncated outside an interval of length $2\sqrt{2}T+1$, the corresponding solution of 
\eqref{eq-wave-0} will blow up like \eqref{ode_blowup} at time $\sqrt{2}T$.
Again by finite speed of propagation, if the support of the truncated $u_0$ is chosen far away from $x=0$, 
then the solution $u$ will not `see' the Dirac potentials over the time interval $[0,\sqrt{2}T)$, 
and will also blow up at time $\sqrt{2}T$, for any values of $\gamma$ and $\alpha$. 
\end{remark}

 
\section{Hamiltonian structure}\label{ham.sec}

In this section we show that \eqref{eq-wave-0} is a Hamiltonian system, and
we establish the relevant conservation laws, namely that the energy and the charge defined in
\eqref{energy} and \eqref{charge} are constants of the motion.
We shall use the general framework developed in \cite{DeGeRo15} to study orbital stability of 
standing waves of infinite-dimensional Hamiltonian systems. 

We start by showing that,
in the terminology of \cite[Sec.~6]{DeGeRo15}, $(\HH,\DD,\Jc)$ forms an appropriate 
symplectic Banach triple for our problem,
provided the map $\Jc:\HH\to\HH^*$ defined by
\begin{equation}\label{symplector}
\Jc(u,v)=(-i\a\d u-v,u)
\end{equation}
is a (weak) symplector, in the sense of Definition~6.2~(i)
in \cite{DeGeRo15}, which we check now.

\begin{lemma}\label{symplector.lem}
The map $\Jc:\HH\to\HH^*$ defined by \eqref{symplector} is a symplector, that is:
\begin{enumerate}[\rm (i)]
\item $\Jc$ is a bounded linear map;
\item $\Jc$ is one-to-one;
\item $\Jc$ is anti-symmetric, in the sense that
\[
\ps{\Jc(u,v),(w,z)}=-\ps{\Jc(w,z),(u,v)}, \quad (u,v),(w,z) \in \HH.
\]
\end{enumerate}
\end{lemma}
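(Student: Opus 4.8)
The plan is to verify the three defining properties of a symplector directly from the explicit formula $\Jc(u,v)=(-i\a\d u-v,u)$, using only elementary properties of the trace map $H^1(\R)\to\C$, $u\mapsto u(0)$, and of the real $L^2$ inner product.

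First, for (i) I would note that $\Jc$ is manifestly linear, and that each component is a bounded map $\HH\to\HH^*=H^{-1}\times L^2$: the term $v\mapsto -v$ lands in $L^2\subset H^{-1}$ boundedly, the term $u\mapsto u$ lands in $L^2$ boundedly, and the singular term $u\mapsto -i\a\d u$ is bounded from $H^1$ to $H^{-1}$ because the trace $u\mapsto u(0)$ is bounded on $H^1(\R)$ (indeed $\abs{u(0)}^2\le 2\nr{u}_{L^2}\nr{u'}_{L^2}\le\nr{u}_{H^1}^2$, as already used in \eqref{trace}), so that $\abs{\ps{\d u,w}}=\abs{\Re u(0)\overline{w}(0)}\lesssim\nr{u}_{H^1}\nr{w}_{H^1}$.

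For (iii) I would simply compute the pairing: for $(u,v),(w,z)\in\HH$,
\[
\ps{\Jc(u,v),(w,z)}=\ps{-i\a\d u-v,w}_{L^2}+\ps{u,z}_{L^2}
=-\a\Im\big(u(0)\overline{w}(0)\big)-\ps{v,w}_{L^2}+\ps{u,z}_{L^2},
\]
where I used $\ps{-i\a\d u,w}=\Re(-i\a u(0)\overline{w}(0))=\a\,\Im(u(0)\overline{w}(0))$ — wait, more carefully $\Re(-i\a u(0)\overline w(0))=\a\Im(u(0)\overline w(0))$, so the first term is $\a\Im(u(0)\overline w(0))$; swapping $(u,v)\leftrightarrow(w,z)$ produces $\a\Im(w(0)\overline u(0))-\ps{z,u}_{L^2}+\ps{w,v}_{L^2}$. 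Since $\Im(w(0)\overline u(0))=-\Im(u(0)\overline w(0))$ and the real $L^2$ inner product is symmetric, this is exactly $-\ps{\Jc(u,v),(w,z)}$, giving anti-symmetry. Property (ii), injectivity, then follows cleanly: if $\Jc(u,v)=0$ then $u=0$ (second component) and $-i\a\d u-v=0$ (first component) forces $v=0$; alternatively one can use that $\ps{\Jc(u,v),(w,z)}=0$ for all $(w,z)$ together with anti-symmetry, but the direct argument is shortest.

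The only mild subtlety — and the step I would be most careful about — is bookkeeping of the real versus complex structure in the singular term: $\d u$ is defined via $\ps{\d u,w}=\Re\big(u(0)\overline w(0)\big)$, so $\ps{-i\a\d u,w}$ must be read as $\Re\big(-i\a\, u(0)\overline w(0)\big)=\a\,\Im\big(u(0)\overline w(0)\big)$, and one must check this term is genuinely anti-symmetric in $(u,v)\leftrightarrow(w,z)$ rather than symmetric. Everything else is routine, and no genuine obstacle is expected.
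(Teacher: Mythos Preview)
Your proof is correct and follows essentially the same route as the paper's: boundedness via the trace inequality $\abs{u(0)}\lesssim\nr{u}_{H^1}$, injectivity by reading off the components of $\Jc(u,v)=0$, and anti-symmetry by the explicit computation of the pairing (the paper merely calls this ``a straightforward calculation, using that $\a\in\real$''). One small notational slip: in your display you write $\ps{-i\a\d u-v,w}_{L^2}$, but $-i\a\d u$ lives in $H^{-1}$, not $L^2$, so this should be the $H^{-1}\times H^1$ duality pairing; the computation itself is unaffected.
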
 

\begin{proof}
(i) Linearity is obvious and boundedness follows from the Sobolev embedding theorem through the estimate
\begin{align*}
|\ps{\Jc(u,v),(w,z)}|
&=\Big|\Im\a u(0)\bar w(0)-\Re\intr v \bar w+\Re\intr u\bar z\Big|\\
&\leq |\a| \norm{u}_{H^1}\norm{w}_{H^1}+\norm{v}_{L^2}\norm{w}_{L^2}+\norm{u}_{L^2}\norm{z}_{L^2}\\
&\lesssim \big(\norm{u}_{H^1}+\norm{v}_{L^2}\big)\big(\norm{w}_{H^1}+\norm{z}_{L^2}\big).
\end{align*}

(ii) $\Jc$ is one-to-one since, clearly, $\Jc(u,v)=(0,0)$ if and only if $(u,v)=(0,0)$.

(iii) The antisymmetry of $\Jc$ follows by a straightforward calculation, using that $\a\in\real$.
\end{proof}

We now turn our attention to the regularity of the energy and 
charge functionals respectively introduced in
\eqref{energy} and \eqref{charge}. In particular, in the terminology of Definition~6.5 in \cite{DeGeRo15}, 
we show that $E$ and $Q$ have $\Jc$-compatible derivatives, i.e.~that 
$E'(u,v),Q'(u,v) \in \rge\Jc$ for all $(u,v)\in\DD$. We write $E,Q\in \mathrm{Dif}(\Dc,\Jc)$.

\begin{lemma}\label{energy_charge.lem}
We have that $E\in C^1(\HH,\real)\cap C^2(\DD,\real)$ and $Q\in C^2(\HH,\real)$. 
For $(\f,\p)\in\DD$ and $(u,v),(w,z)\in\HH$, we have 
\[
E'(\f,\p)=(-\f''+m^2 \f-|\f|^{p-1}\f-i\a\d \p, \p), 
\]
\begin{multline*}
\ps{E''(\f,\p)(u,v),(w,z)}
= \Re\Big[\g u(0)\bar w(0)+\intr \Big( u'\bar w'+\big(m^2 u-|\f|^{p-1}u-(p-1)|\f|^{p-3}\f\re(\f\bar{u})\big)\bar w \Big) \diff x
+\intr v\bar z \diff x\Big], 
\end{multline*}
and for $(\f,\p),(u,v)\in\HH$
\[
Q'(\f,\p)=(-\a\d \f+i\p,-i\f),
\]
\[
Q''(\f,\p)(u,v)=(-\a\d u+iv,-iu).
\]
Furthermore, $E'(\f,\p)\in\rge\Jc$ and $Q'(\f,\p)\in\rge\Jc$ for all $(\f,\p)\in\HH$.
\end{lemma}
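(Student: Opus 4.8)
The plan is to verify each claimed formula by computing Fréchet derivatives directly from the explicit expressions \eqref{energy} and \eqref{charge}, and then to establish the regularity statements and the range conditions. I would organize the proof in four stages: the charge functional $Q$ (easiest, since it is quadratic), the energy functional $E$ (the nonlinear term requires care), the $C^1$/$C^2$ regularity claims, and finally the $\Jc$-compatibility of the derivatives.

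First I would handle $Q$. Since $Q(u,v)=\Im\intr u\bar v\diff x-\frac\a2|u(0)|^2$ is a bounded quadratic form on $\HH$ (the trace term being controlled by the Sobolev embedding $H^1(\R)\hookrightarrow C_b(\R)$), it is smooth, with $Q''$ constant, and one reads off $Q'(\f,\p)$ and $Q''(\f,\p)$ by polarizing: $\ps{Q'(\f,\p),(u,v)}=\Im\intr(u\bar\p+\f\bar v)\diff x-\a\Re(\f(0)\bar u(0))$, which one then rewrites, using that the pairing is the real $L^2\times L^2$ pairing, to match $(-\a\d\f+i\p,-i\f)$; here one uses $\Im\intr u\bar\p\diff x=\Re\intr u\,\overline{(i\p)}\diff x$ and similarly for the other term. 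Next, for $E$, I would differentiate term by term. The quadratic terms $\frac12\nr{u'}_{L^2}^2$, $\frac{m^2}2\nr u_{L^2}^2$, $\frac12\nr v_{L^2}^2$, $\frac\g2|u(0)|^2$ are handled as for $Q$, producing $(-\f''+m^2\f,\p)$ plus the boundary contribution $\g\d\f$; but since we work with $(\f,\p)\in\DD$, the jump condition \eqref{def-DD} converts $-\f''$ acting distributionally plus $\g\d\f$ into the stated $-\f''+m^2\f+\ldots$ together with the $-i\a\d\p$ term coming from how the pairing sees the domain constraint — this is the delicate bookkeeping step. The nonlinear term $-\frac1{p+1}\intr|u|^{p+1}\diff x$ has Gâteaux derivative $-\Re\intr|\f|^{p-1}\f\bar u\diff x$ by differentiating $|\f+tu|^{p+1}$ in $t$, and its second derivative gives the $-|\f|^{p-1}u-(p-1)|\f|^{p-3}\f\re(\f\bar u)$ expression after differentiating $|\f|^{p-1}\f$ in the real-linear sense; one must check these Gâteaux derivatives are continuous in $(\f,\p)$ to upgrade to Fréchet, using $H^1\hookrightarrow L^\infty$ and dominated convergence.

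For the regularity, I would argue that the quadratic part of $E$ contributes a bounded symmetric bilinear form, hence is $C^\infty$, while the map $\f\mapsto\intr|\f|^{p+1}\diff x$ is $C^1$ on $H^1(\R)$ for every $p>1$ (by the Sobolev embedding and standard Nemytskii-operator estimates as in the Lemma preceding Corollary~\ref{Lipschitz}), but only $C^2$ when one has enough room — this is exactly why $E\in C^1(\HH,\R)$ but $C^2$ is asserted only on $\DD$ (where the extra control, via $\Hds$, makes the second derivative of the nonlinear term continuous). I would state this carefully: on $\DD$ the functions are in $H^2(\R\setminus\{0\})\hookrightarrow C^1$ away from the origin, which is what lets the second-derivative formula make sense and be continuous. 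Finally, for $E'(\f,\p),Q'(\f,\p)\in\rge\Jc$: recalling $\Jc(w,z)=(-i\a\d w-z,w)$, I would simply exhibit preimages — for $Q'(\f,\p)=(-\a\d\f+i\p,-i\f)$ take $(w,z)=(-i\f,\ldots)$ and solve, and similarly for $E'$; concretely $\rge\Jc=\{(\chi,\eta)\in\HH^*:\eta\in H^1,\ \chi+i\a\d\eta+? \}$, so the condition amounts to checking the first component of $E'$ (resp.\ $Q'$) lies in $H^{-1}$ as required and the second in $H^1$, which holds since the second components $\p$ and $-i\f$ are in $H^1$.

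The main obstacle I anticipate is the $E'$ computation on $\DD$: correctly accounting for how the boundary term $\frac\g2|u(0)|^2$ in $E$ and the jump condition defining $\DD$ interact, so that the distributional Laplacian $-\f''$ (which, for $\f\in\Hds$, carries a singular part $-(\f'(0^+)-\f'(0^-))\d$) combines with $+\g\d\f$ to leave precisely the stated $-i\a\d\p$ singular term in $E'(\f,\p)$ — in other words, verifying that $E'(\f,\p)$, computed naively, equals the claimed expression exactly because $(\f,\p)\in\DD$. The nonlinear second-derivative formula, with its $|\f|^{p-3}$ factor, also needs a short justification that it is well-defined (the apparent singularity at $\f=0$ is harmless since it is multiplied by $\f\,\re(\f\bar u)$, giving a bound by $|\f|^{p-1}|u|$) and continuous in $\f$, which again uses $H^1\hookrightarrow L^\infty$.
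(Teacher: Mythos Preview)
Your approach is essentially the same as the paper's: the paper dismisses the derivative formulas and regularity as ``routine verifications'' and then exhibits explicit preimages under $\Jc$, namely $(w,z)=(\p,\f''-m^2\f+|\f|^{p-1}\f)$ for $E'$ and $(w,z)=-i(\f,\p)$ for $Q'$. Your identification of the key bookkeeping step --- that on $\DD$ the jump condition converts the singular part of the distributional $-\f''$ together with $\g\d\f$ into exactly $-i\a\d\p$ --- is indeed the crucial point.

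One small correction: your explanation for why $C^2$ is asserted only on $\DD$ (that the nonlinear second derivative needs the extra regularity of $\Hds$) is not the reason. The bilinear form in the stated $E''$ formula is already well-defined and continuous in $(\f,\p)\in\HH$ via $H^1\hookrightarrow L^\infty$; the restriction to $\DD$ is there so that the stated expression for $E'$ (with $-\f''\in L^2$ pointwise and the $-i\a\d\p$ term) is valid. Relatedly, be careful with the range claim: $E'(\f,\p)\in\rge\Jc$ genuinely requires $(\f,\p)\in\DD$, since the preimage $(\p,\f''-m^2\f+|\f|^{p-1}\f)$ lies in $\HH=H^1\times L^2$ only when $\p\in H^1$ and $\f''\in L^2$. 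The paper's proof makes this restriction explicit, and you do so implicitly when you assert ``$\p$ is in $H^1$''; just make sure this dependence on $\DD$ is stated clearly.
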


\begin{proof}
The regularity stated and the expressions obtained for the Fr\'echet derivatives follow from routine verifications.

Let $(\f,\p)\in\DD$. To see that $E'(\f,\p)\in\rge\Jc$, one has to find $(w,z)\in\HH$ such that
\[
-i\a\d w-z=-\f''+m^2 \f-|\f|^{p-1}\f-i\a\d \p \qandq w=\p \quad \text{in} \ \HH^*.
\]
This yields $(w,z)=(\p,\f''-m^2 \f+|\f|^{p-1}\f)$, which clearly belongs to $\HH$.
Similarly, for $(\f,\p) \in \HH$,
\begin{equation}\label{JinverseQ}
\Jc(w,z)=Q'(\f,\p) \iff (w,z)=-i(\f,\p).
\end{equation}
This completes the proof.
\end{proof}

Lemmas \ref{symplector.lem} and \ref{energy_charge.lem} show that $(\HH,\DD,\Jc)$ is a suitable
symplectic Banach triple for our problem, with associated Hamiltonian $E$.
For initial conditions $U_0=(u_0,u_1)\in \DD$, the differential equation in \eqref{eq-wave-0} can indeed 
be written as the Hamiltonian system (see Definition~6.6 in \cite{DeGeRo15})
\begin{equation}\label{ham-0}
\Jc\frac{\dif}{\dif t}U(t)=E'(U(t)).
\end{equation}

\begin{remark}\label{stable_domain.rem}
\rm The well-posedness theory in Section~\ref{cauchy.sec} shows that the domain $\DD$ is stable
under the flow of \eqref{ham-0}, so that, by Lemma~\ref{energy_charge.lem}, 
$E'(U(t))$ indeed belongs to $\rge\Jc$ over the lifespan of the solution.
\end{remark}

\begin{proposition}
The energy $E$ and the charge $Q$ are constants of the motion for \eqref{ham-0}, i.e.~for any 
$U_0=(u_0,u_1)\in \HH$, $E(U(t))=E(U_0)$ and $Q(U(t))=Q(U_0)$, as long as the solution exists.
\end{proposition}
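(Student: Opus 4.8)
The plan is to prove conservation first for strong solutions with initial data $U_0 \in \DD$, and then extend to general $U_0 \in \HH$ by the continuous dependence established in Section~\ref{cauchy.sec}. For the strong case, fix $U_0 \in \DD$; by the last theorem of Section~\ref{cauchy.sec}, the solution $U$ lies in $C([0,T],\DD) \cap C^1([0,T],\HH)$ for every $T < T(U_0)$, so all the computations below are justified. The key observation, already half-done in the excerpt, is the skew-adjointness-type identity \eqref{eq-AU-U}: for $U = (u,v) \in \DD$ one has $\ps{\Ac U, U}_{\HH,m,\g} = 0$, which says precisely that $\frac{\dif}{\dif t}\big(\tfrac12\|U(t)\|_{\HH,m,\g}^2\big)$ only picks up the contribution of the nonlinear forcing $F(U)$.

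For the energy, I would compute $\frac{\dif}{\dif t} E(U(t))$ directly using $E \in C^1(\HH,\real)$ and the chain rule: $\frac{\dif}{\dif t} E(U(t)) = \ps{E'(U(t)), \dot U(t)}$. By the Hamiltonian formulation \eqref{ham-0} (valid for $U_0 \in \DD$, cf.~Remark~\ref{stable_domain.rem}), $E'(U(t)) = \Jc \dot U(t)$, so $\frac{\dif}{\dif t} E(U(t)) = \ps{\Jc \dot U(t), \dot U(t)}$, which vanishes by the antisymmetry of $\Jc$ (Lemma~\ref{symplector.lem}~(iii)). Alternatively, and perhaps more transparently without invoking the full Hamiltonian machinery, one expands $\frac{\dif}{\dif t} E(U(t))$ by hand using the expression for $E'$ in Lemma~\ref{energy_charge.lem} and the equation $\dot U - \Ac U = F(U)$: the quadratic part of $E$ reproduces $\ps{\Ac U, U}_{\HH,m,\g}$-type terms that cancel as in \eqref{eq-AU-U}, and the remaining cross terms between the nonlinearity $-|u|^{p-1}u$ in $E'$ and the forcing $F(U) = (0, f(u))$ cancel the term $\frac{\dif}{\dif t}\big(-\tfrac1{p+1}\int |u|^{p+1}\big)$ coming from the nonlinear part of $E$. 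The same scheme applies to $Q$: using $Q \in C^2(\HH,\real)$, $Q'(U(t)) = (-\a\d u + iv, -iu)$ from Lemma~\ref{energy_charge.lem}, and $\dot u = v$, $\dot v = u'' - m^2 u + f(u)$, one checks by a short computation (integration by parts, using the jump condition in $\DD$ to handle the $\d$-term, and the fact that $f(u)=|u|^{p-1}u$ is real so $\Im(\bar u f(u)) = 0$) that $\frac{\dif}{\dif t} Q(U(t)) = 0$.

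Finally, to pass from $U_0 \in \DD$ to arbitrary $U_0 \in \HH$: given $U_0 \in \HH$, use density of $\DD$ in $\HH$ (Lemma~\ref{lem-DD-dense}) to pick $U_{0,n} \in \DD$ with $U_{0,n} \to U_0$ in $\HH$. Fix $T < T(U_0)$; by Theorem~\ref{semicontinuity}, for $n$ large the solutions $U_n$ exist on $[0,T]$ and $U_n \to U$ in $C([0,T],\HH)$. Since $E, Q \in C^1(\HH,\real)$ are in particular continuous on $\HH$, we get $E(U(t)) = \lim_n E(U_n(t)) = \lim_n E(U_{0,n}) = E(U_0)$ for each $t \in [0,T]$, and likewise for $Q$. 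As $T < T(U_0)$ was arbitrary, this gives conservation over the whole lifespan.

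\textbf{Main obstacle.} The only genuinely delicate point is the direct by-hand differentiation in the strong case: one must be careful that $U(t) \in \DD$ (not merely $\HH$) so that the integrations by parts producing boundary terms at $x = 0^\pm$ are legitimate and the jump condition \eqref{jump} can be used to absorb the $\g|u(0)|^2$ and $\a|u(0)|^2$ contributions — this is exactly where the structure encoded in $\DD$ and the computation \eqref{eq-AU-U} do the work. Invoking the Hamiltonian identity \eqref{ham-0} together with the antisymmetry of $\Jc$ streamlines this considerably and is the route I would favor for $E$; for $Q$ a short explicit computation seems cleanest.
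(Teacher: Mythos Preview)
Your proof is correct and, at its core, matches the paper's argument: the antisymmetry of $\Jc$ for conservation of $E$, and an explicit computation using the jump condition in $\DD$ for conservation of $Q$. The paper packages this via the Poisson-bracket formalism of \cite{DeGeRo15}, checking that $\{E,E\}=0$ (antisymmetry) and $\{E,Q\}=0$ (the same integration by parts with the jump condition that you sketch), and then invokes \cite[Theorem~5]{DeGeRo15} to conclude conservation --- that cited theorem absorbs precisely the ``strong solutions first, then density'' step that you carry out explicitly with Lemma~\ref{lem-DD-dense} and Theorem~\ref{semicontinuity}. So your route is a self-contained unpacking of what the paper outsources to the abstract framework; neither approach offers a real advantage over the other beyond taste.
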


\begin{proof}
Following \cite[Theorem~5,~p.~191]{DeGeRo15}, one only needs to check that both $E$ and $Q$
Poisson-commute with $E$, i.e.~that $\{E,E\}(u,v)=\{E,Q\}(u,v)=0$ for all $(u,v)\in\DD$, where for any
$F\in \mathrm{Dif}(\DD,\Jc)$, the Poisson bracket $\{E,F\}$ is defined as
\begin{equation}\label{poisson}
\{E,F\}(u,v)=\ps{E'(u,v),\Jc^{-1}F'(u,v)}, \quad (u,v)\in\DD.
\end{equation}
That $\{E,E\}(u,v)=0$ for all $(u,v)\in\DD$ is a trivial consequence of the anti-symmetry of $\Jc$.
As for $\{E,Q\}$, using the explicit expression $\Jc^{-1}(w,z)=(z,-w-i\a\d z)$ (or \eqref{JinverseQ}), 
we have
\begin{align*}
\{E,Q\}(u,v)
&=\ps{(-u''+m^2 u-|u|^{p-1}u-i\a\d v, v),(-iu,-iv)} \\
&=\Re\Big[\intr (-u'' + m^2 u - |u|^{p-1}u -i\a\d v)\overline{(-iu)} \diff x + \Re\intr v\overline{(-iv)}\diff x\Big]\\
&=\Re\Big[-i\intr u''\bar u \diff x + \al v(0)\bar u(0)\Big]\\
&=\Re[-i\bar u(0)(u(0^-)-u(0^+))+\al \bar u(0)v(0)]=0,
\end{align*}
which completes the proof.
\end{proof}


\section{Stability of standing waves} \label{spectrum.sec}

Having established the well-posedness and the Hamiltonian structure of 
the initial-value problem \eqref{eq-wave-0}, 
we now investigate the stability of standing waves by applying the energy-momentum method
described in the introduction. The criterion for orbital stability of the standing waves \eqref{stwave}
is the following.

\begin{proposition} \label{gss1}
Suppose the standing wave $e^{i\omega_0 t}\Phi_{\omega_0}(x)$
satisfies the spectral conditions \textnormal{(S1)--(S3)}. 
Then it is orbitally stable if 
\[
\frac{\diff}{\diff\omega}\Big\vert_{\om=\om_0}Q(\Phi_\omega)>0,
\]
and orbitally unstable if
\[
\frac{\diff}{\diff\omega}\Big\vert_{\om=\om_0}Q(\Phi_\omega)<0.
\]
\end{proposition}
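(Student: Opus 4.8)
The proof amounts to checking that the standing wave $\Phi_{\omega_0}$ fits the abstract framework for orbital stability of relative equilibria of Hamiltonian systems with symmetry, as developed by Grillakis, Shatah and Strauss \cite{GSS} and reformulated in \cite{DeGeRo15,Stuart}, and then invoking the corresponding abstract theorem. The plan is therefore to review the data required by that framework and to indicate where each ingredient has already been established: the symplectic Banach triple $(\HH,\DD,\Jc)$ with Hamiltonian $E$ (Lemmas~\ref{symplector.lem} and~\ref{energy_charge.lem}); the charge $Q$, which, together with $E$, is of class $C^2$, has $\Jc$-compatible derivatives, and is conserved along the flow (Lemma~\ref{energy_charge.lem} and Section~\ref{ham.sec}); and local well-posedness, with $\DD$ stable under the flow (Theorem~\ref{blow-up} and Remark~\ref{stable_domain.rem}). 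It is worth emphasising that this abstract theory uses only local well-posedness and the conservation laws, not unitarity of the linear propagator, so the possible non-unitarity of $e^{t\Ac}$ --- the main novelty of the present setting --- plays no role here.

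Next I would record the regularity of the branch of standing waves and identify the slope quantity. By the explicit formula~\eqref{explicit}, for $\omega$ in a neighbourhood of the admissible frequency $\omega_0$ the map $\omega\mapsto\ffi_\omega\in\Hds$ is $C^1$ (in fact smooth on the admissible set), hence $\omega\mapsto\Phi_\omega=(\ffi_\omega,i\omega\ffi_\omega)$ is a $C^1$ branch of solutions of the stationary equation~\eqref{ham_stat-0}. Setting $d(\omega)=L_\omega(\Phi_\omega)=E(\Phi_\omega)+\omega Q(\Phi_\omega)$, differentiating, and using $E'(\Phi_\omega)+\omega Q'(\Phi_\omega)=0$ to cancel the term involving $\partial_\omega\Phi_\omega$, one obtains $d'(\omega)=Q(\Phi_\omega)$, and therefore $d''(\omega_0)=\frac{\diff}{\diff\omega}\Big\vert_{\omega=\omega_0}Q(\Phi_\omega)$. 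Thus the hypothesis of the proposition is exactly the classical convexity/concavity condition $d''(\omega_0)>0$ (resp.~$d''(\omega_0)<0$).

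Then I would match the spectral data. By the results of Section~\ref{ham.sec} and of the present section, $\tilde R^{-1}L_{\omega_0}''(\Phi_{\omega_0})$ is a bounded self-adjoint operator on $\HH$; by (S1) it has exactly one negative eigenvalue, which is simple; by (S2) its kernel is spanned by $i\Phi_{\omega_0}$, the infinitesimal generator of the $\mathbb{S}^1$-action~\eqref{action}; and by (S3) the remainder of its spectrum is positive and bounded away from $0$. These are precisely the spectral hypotheses under which the abstract results of \cite{GSS,DeGeRo15,Stuart} apply, with state space $\HH$, symplector $\Jc$, and Riesz map $\tilde R$. Invoking that theorem and inserting the identity for $d''(\omega_0)$ obtained above, we conclude that $\Phi_{\omega_0}$ is orbitally stable when this derivative is positive and orbitally unstable when it is negative.

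The step I expect to be the main obstacle is precisely this last verification: one must check, item by item, that (S1)--(S3) coincide with the hypotheses of the abstract stability/instability statement being cited, and in particular that the single negative direction of $L_{\omega_0}''(\Phi_{\omega_0})$ is exactly compensated on the constrained set $\{Q=Q(\Phi_{\omega_0})\}$ when $d''(\omega_0)>0$ --- so that $L_{\omega_0}$ becomes coercive transverse to the group orbit and a Lyapunov-function argument yields \eqref{stability} --- whereas the opposite sign produces a direction of descent, whence orbital instability. Everything else reduces to routine verifications, already carried out in Sections~\ref{cauchy.sec}--\ref{ham.sec} and through the explicit formula~\eqref{explicit}.
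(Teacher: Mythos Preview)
Your proposal is correct and matches the paper's approach: the paper does not give an explicit proof of this proposition, treating it instead as a direct application of the abstract stability/instability criteria of \cite{GSS,DeGeRo15,Stuart}, once the well-posedness (Section~\ref{cauchy.sec}), the Hamiltonian structure and conservation laws (Section~\ref{ham.sec}), and the spectral hypotheses (S1)--(S3) have been put in place. Your outline is in fact more detailed than the paper's own treatment, spelling out the identification $d''(\omega_0)=\frac{\diff}{\diff\omega}\big\vert_{\omega=\omega_0}Q(\Phi_\omega)$ and the item-by-item matching of hypotheses that the paper leaves implicit.
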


Let $A$ be a selfadjoint operator that is bounded below with positive essential spectrum.
We shall henceforth denote by $n(A)\in\N$ the number of negative eigenvalues 
(counted with multiplicities) of $A$,
and we set
\[
n_{\omega}:=n \big( \tilde{R}^{-1}L''_{\omega}(\Phi_{\omega}) \big),
\]
for all admissible $\omega\in\R$. 
In Proposition~\ref{gss1}, we have $n_{\omega_0}=1$. 
For $n_{\omega_0}=2$, we will exhibit regimes of linear instability  
using the following criterion, borrowed from \cite{Gs2}.

\begin{proposition} \label{gss2}
Let
\[
p(d''(\o))=\begin{cases}
1 \quad \textit{if} \ \frac{\diff }{\diff \o}\Big\vert_{\o=\o_0} Q(\Phi_\o) > 0, \\
0 \quad \textit{if} \ \frac{\diff }{\diff \o}\Big\vert_{\o=\o_0} Q(\Phi_\o) < 0.
\end{cases} 
\]
Then the standing wave $e^{i\omega_0 t}\Phi_{\omega_0}$ is linearly unstable if $n_{\o_0}-p(d''(\o_0))$ is odd.
\end{proposition}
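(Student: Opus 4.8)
The plan is to recast the linearized equation \eqref{lin} in the abstract operator form treated in \cite{Gs2} and then to invoke the linearized instability theorem established there. Writing $L:=\tilde R^{-1}L_{\om_0}''(\Phi_{\om_0})$ and $J:=\Jc^{-1}\tilde R$, the generator of \eqref{lin} is $\mathcal B:=JL=\Jc^{-1}L_{\om_0}''(\Phi_{\om_0})$; by the results of Sections~\ref{ham.sec} and~\ref{spectrum.sec}, $L$ is a bounded selfadjoint operator on $\HH$ (for the inner product induced by $\tilde R$), while $J$ is skew-adjoint with bounded inverse $J^{-1}=\tilde R^{-1}\Jc\in\Lc(\HH)$. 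Since $\mathcal B$ is, up to a bounded perturbation and a shift of the coupling constant in the jump condition, of the same type as $\Ac$, it generates a $C^0$-group by the arguments of Section~\ref{cauchy.sec}, so \eqref{lin} is globally well posed and the linear (Lyapunov) instability of $0$ amounts to $e^{t\mathcal B}$ failing to be uniformly bounded on $[0,\infty)$. Under the standing hypotheses, $L$ is bounded below, its essential spectrum is positive and bounded away from $0$ (by (S3) and the equivalence of norms), and it has exactly $n_{\om_0}$ negative eigenvalues, counted with multiplicity (by (S1) or (S1$'$)).

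The first substantive step is to identify the generalized null space $N_g$ of $\mathcal B$. By (S2), $\ker\mathcal B=\ker L_{\om_0}''(\Phi_{\om_0})=\mathrm{span}\{i\Phi_{\om_0}\}$. Differentiating the identity \eqref{ham_stat-0} with respect to $\om$ and using $L_\om=E+\om Q$ gives $L_{\om_0}''(\Phi_{\om_0})\,\partial_\om\Phi_\om|_{\om_0}=-Q'(\Phi_{\om_0})$; combined with \eqref{JinverseQ}, which yields $Q'(\Phi_{\om_0})=-\Jc(i\Phi_{\om_0})$, this shows $\mathcal B\,\partial_\om\Phi_\om|_{\om_0}=i\Phi_{\om_0}$, so that $\{i\Phi_{\om_0},\partial_\om\Phi_\om|_{\om_0}\}$ is a Jordan chain of length two. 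This chain does not extend further: $\partial_\om\Phi_\om|_{\om_0}\in\rge\mathcal B$ if and only if $\ps{\Jc(\partial_\om\Phi_\om|_{\om_0}),i\Phi_{\om_0}}=0$, and by the antisymmetry of $\Jc$ this pairing equals $\ps{Q'(\Phi_{\om_0}),\partial_\om\Phi_\om|_{\om_0}}=\frac{\dif}{\dif\om}\big\vert_{\om=\om_0}Q(\Phi_\om)$, which is nonzero in the regime considered (this is exactly what makes $p(d''(\om_0))$ well defined). Hence $N_g=\mathrm{span}\{i\Phi_{\om_0},\partial_\om\Phi_\om|_{\om_0}\}$ is exactly two-dimensional, and the quadratic form $v\mapsto\ps{L_{\om_0}''(\Phi_{\om_0})v,v}$ restricted to $N_g$ has, in this basis, the matrix $\mathrm{diag}(0,-d''(\om_0))$; in particular its number of negative directions is $p(d''(\om_0))$.

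Next I would apply the counting argument of \cite{Gs2}. Decomposing $\HH=N_g\oplus Z$ along the symplectic structure, one checks (using the antisymmetry of $\Jc$ and $L_{\om_0}''(\Phi_{\om_0})i\Phi_{\om_0}=0$) that the form block-diagonalizes, so that $L_{\om_0}''(\Phi_{\om_0})$ restricted to $Z$ is non-degenerate with exactly $n_{\om_0}-p(d''(\om_0))$ negative eigenvalues, and that $Z$ is $\mathcal B$-invariant with $\mathcal B|_Z$ boundedly invertible. Since $J$ is skew-adjoint, $L$ selfadjoint, and the operators are real, the spectrum of $\mathcal B|_Z$ is invariant under $\lambda\mapsto-\lambda$ and $\lambda\mapsto\bar\lambda$, so that eigenvalues away from the two axes occur in quadruples and purely imaginary ones in pairs carrying a Krein signature. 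The instability theorem of \cite{Gs2} then asserts that the number of eigenvalues of $\mathcal B|_Z$ in the open right half-plane, counted with algebraic multiplicity, has the same parity as $n_{\om_0}-p(d''(\om_0))$. When this integer is odd, at least one eigenvalue of $\mathcal B$ with strictly positive real part must occur, which produces an exponentially growing solution of \eqref{lin}, i.e.\ linear instability of the standing wave.

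The algebra above is short; the genuine work — and the reason the statement is quoted from \cite{Gs2} rather than proved in detail here — is to verify that our concrete setting really fits the abstract hypotheses of that reference. The main obstacle is to accommodate the fact that the linear flow need not be unitary and that the delta interactions are encoded in the domain of $\mathcal B$: one has to check that $\mathcal B=JL$ has compact resolvent off its (positive) essential spectrum, so that (S1) or (S1$'$) genuinely accounts for all of its negative spectrum, that the generalized kernel is no larger than $N_g$, and that the splitting $\HH=N_g\oplus Z$ is compatible with both $L$ and the symplectic form. These points all reduce to (S1) or (S1$'$), (S2), (S3), and the assumption $d''(\om_0)\neq0$ that is implicit in the definition of $p(d''(\om_0))$.
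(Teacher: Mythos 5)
The paper does not prove this proposition at all: it is quoted as a ready-made criterion ``borrowed from \cite{Gs2}'', and the only work done elsewhere in the paper is to compute $n_{\om_0}$ and the sign of $\frac{\diff}{\diff\om}Q(\Phi_\om)$ so that the criterion can be applied. Your proposal ultimately rests on the same citation, but you go further and reconstruct the skeleton of the Grillakis--Shatah--Strauss argument, and the model-specific computations you carry out are correct: differentiating \eqref{ham_stat-0} does give $L''_{\om_0}(\Phi_{\om_0})\,\partial_\om\Phi_\om|_{\om_0}=-Q'(\Phi_{\om_0})$, which together with \eqref{JinverseQ} produces the Jordan chain $\{i\Phi_{\om_0},\partial_\om\Phi_\om|_{\om_0}\}$ for the generator of \eqref{lin}, the obstruction to extending the chain is exactly the pairing $\ps{Q'(\Phi_{\om_0}),\partial_\om\Phi_\om|_{\om_0}}=\frac{\diff}{\diff\om}\big|_{\om_0}Q(\Phi_\om)$, and the restriction of the Hessian to the generalized kernel has $p(d''(\om_0))$ negative directions; this is precisely the mechanism behind the parity count in \cite{Gs2}. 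Two technical statements in your write-up are inaccurate, though not fatal to the sketch: $\Jc$ is only a weak symplector (it is not onto $\HH^*$), so $J=\Jc^{-1}\tilde R$ is a densely defined unbounded operator rather than one with bounded inverse in both directions, and the phrase ``$\mathcal{B}=JL$ has compact resolvent off its (positive) essential spectrum'' should refer to the selfadjoint operator $\tilde R^{-1}L''_{\om_0}(\Phi_{\om_0})$ — the spectrum of the non-selfadjoint generator $\mathcal{B}$ is not positive. Finally, the genuinely delicate point that you flag in your last paragraph — verifying that this concrete setting (non-unitary linear flow, delta interactions encoded in the operator domain, $\rge\Jc\neq\HH^*$) satisfies the abstract hypotheses of \cite{Gs2} — is left unverified both by you and by the paper, so on that score your proposal is neither weaker nor stronger than the published text; it simply makes explicit what the citation hides.
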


\begin{corollary} Suppose the standing wave $e^{i\omega_0 t}\Phi_{\omega_0}$ 
satisfies \textnormal{(S$1'$)--(S3)} and
\[
\frac{\diff }{\diff \o}\Big\vert_{\o=\o_0} Q(\Phi_\o) > 0.
\]
Then it is linearly unstable.
\end{corollary}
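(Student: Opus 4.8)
The plan is to reduce the corollary directly to Proposition~\ref{gss2}. Observe that conditions (S$1'$)--(S3) describe exactly the spectral situation in which $n_{\omega_0}=2$: the operator $\tilde R^{-1}L_{\omega_0}''(\Phi_{\omega_0})$ has two negative eigenvalues counted with multiplicities (either two simple eigenvalues, or a single eigenvalue of multiplicity two), $\ker L_{\omega_0}''(\Phi_{\omega_0})=\mathrm{span}\{i\Phi_{\omega_0}\}$ is one-dimensional, and the rest of the spectrum is positive and bounded away from $0$. Hence $n_{\omega_0}=2$.

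Next I would compute the parity of $n_{\omega_0}-p(d''(\omega_0))$. Since by hypothesis $\frac{\diff}{\diff\omega}\big\vert_{\omega=\omega_0}Q(\Phi_\omega)>0$, the definition of $p$ in Proposition~\ref{gss2} gives $p(d''(\omega_0))=1$. Therefore $n_{\omega_0}-p(d''(\omega_0))=2-1=1$, which is odd. Proposition~\ref{gss2} then applies verbatim and yields that the standing wave $e^{i\omega_0 t}\Phi_{\omega_0}$ is linearly unstable, which is the claim.

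There is essentially no obstacle here: the corollary is a bookkeeping consequence of Proposition~\ref{gss2} once one has identified that (S$1'$) encodes $n_{\omega_0}=2$. The only point requiring a word of care is verifying that the hypotheses of Proposition~\ref{gss2} (and of the underlying result in \cite{Gs2}) are met under (S$1'$)--(S3) — namely that $\tilde R^{-1}L_{\omega_0}''(\Phi_{\omega_0})$ is selfadjoint, bounded below, with positive essential spectrum, and that the slope $\frac{\diff}{\diff\omega}Q(\Phi_\omega)$ is well-defined and nonzero at $\omega_0$. The selfadjointness and boundedness are guaranteed by the remarks preceding (S1)--(S3) in the introduction together with the spectral analysis of Section~\ref{spectrum.sec}, while positivity of the essential spectrum away from $0$ is precisely (S3); the slope hypothesis is part of the assumption. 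With these in hand the proof is the one-line parity argument above.

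\begin{proof}
By assumption, the standing wave satisfies (S$1'$)--(S3). Condition (S$1'$) means precisely that the selfadjoint operator $\tilde R^{-1}L_{\omega_0}''(\Phi_{\omega_0})$ has exactly two negative eigenvalues counted with multiplicities, so $n_{\omega_0}=2$; conditions (S2) and (S3) ensure that the remaining spectrum consists of the simple kernel $\mathrm{span}\{i\Phi_{\omega_0}\}$ and a part that is positive and bounded away from zero, so that Proposition~\ref{gss2} applies. Since $\frac{\diff}{\diff\omega}\big\vert_{\omega=\omega_0}Q(\Phi_\omega)>0$, we have $p(d''(\omega_0))=1$, whence $n_{\omega_0}-p(d''(\omega_0))=2-1=1$ is odd. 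By Proposition~\ref{gss2}, the standing wave $e^{i\omega_0 t}\Phi_{\omega_0}$ is linearly unstable.
\end{proof}
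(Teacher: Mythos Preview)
Your proof is correct and matches the paper's approach: the paper states this result as an immediate corollary of Proposition~\ref{gss2} without writing out a proof, and your argument --- reading off $n_{\omega_0}=2$ from (S$1'$), $p(d''(\omega_0))=1$ from the slope hypothesis, and invoking the parity criterion --- is exactly the intended one-line deduction.
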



\subsection{Spectral analysis}  \label{PointSpectrum}

Our purpose here is to give some spectral properties 
(in particular the number of negative eigenvalues) of 
the operator $\tilde R \inv L''_\omega(\Phi_\omega)$. We will consider
$\alpha, \gamma$ and $\omega$ satisfying \eqref{omega_range}. 
The quantity 
\[
\beta=\beta(\omega):=\g - \a \o, 
\]
which appears in \eqref{stat}, will play an important role below. 
In view of the admissibility condition \eqref{omega_range}, we shall consider
$\b \in (-\b_0,\b_0)$, where $\b_0 := 2 \sqrt{m^2 - \o^2}$.
The main results of this subsection rely on
the dependence on $\beta$ of the key objects entering the spectral analysis.
With this in mind (and to avoid a too heavy notation)
we shall relabel various quantities by $\beta$ and temporarily drop the index $\omega$.
For instance --- with a slight abuse of notation --- we will write $\f_\b$ instead of $\f_\o$, that is,
\begin{equation} \label{def-tilde-phi}
\f_\b(x) = 
\left[\frac{(p+1)(m^2-\omega^2)}{2}\textnormal{sech}^2
\left(\frac{(p-1)\sqrt{m^2-\omega^2}}{2}|x|+\tanh^{-1}
\left(-\frac{\b}{2\sqrt{m^2-\omega^2}}\right)\right)\right]^{\frac{1}{p-1}}.
\end{equation}
One should of course keep in mind the dependence on $\omega$. It will not be relevant
for our analysis here, but will come back with full force in the next subsection.
For $U = (u,v) \in \HH$, we now let 
\begin{equation} \label{expr-Lb}
\Lpp U := \big( -u'' + m^2 u -  \f_\b^{p-1} u - (p-1) \f_\b^{p-1} \Re(u) + \b \d u + i \o v , v -i\o u \big) \quad \in \HH^*.
\end{equation}
In view of Lemma~\ref{energy_charge.lem}, this reads $\Lpp=L_\o''(\Phi_\o)$. 
We shall also use the convenient notation $\RLpp := \tilde R\inv \Lpp$.

Let $\b \in (-\b_0,\b_0)$. We observe that $\Lpp : \HH \to \HH^*$ is a bounded operator and, 
for $U, W \in \HH$, we have 
\begin{equation} \label{eq-L-sym}
\langle \RLpp U , W \rangle_\HH = \langle \Lpp U,W \rangle_{\HH^*,\HH} 
= \langle U,\Lpp W \rangle_{\HH,\HH^*} = \langle U,\RLpp W \rangle_\HH,
\end{equation}
so $\RLpp$ is a bounded self-adjoint operator on $\HH$.

Instead of analyzing directly the spectral properties of $\RLpp$, it will be more convenient to work with the operator on $L^2 \times L^2$ associated to the form $\Lpp$. More precisely, we set
\begin{equation} \label{def-Db}
D_\b :=\{ u \in \Hds : u'(0^+)-u'(0^-)= \b u (0) \}
\end{equation}
and we consider on $L^2 \times L^2$ the operator $\Lcb$ defined by $\Dom(\Lcb) = D_\b \times L^2$ and, for $U = (u,v) \in \Dom(\Lcb)$,
\[
\Lcb U = \big( -u'' + m^2 u -  \f_\b^{p-1} u - (p-1)  \f_\b^{p-1} \Re(u) + i \o v , v -i\o u \big) \quad \in L^2 \times L^2.
\]
This defines a ($\R$-linear) self-adjoint operator which shares the same relevant spectral properties as $\RLpp$:

\begin{proposition} \label{prop-RLpp-Lcb}
The operator $\Lcb$ is selfadjoint and bounded from below on $L^2 \times L^2$, and for $U \in D_\b \times L^2$ we have 
\[
\innp{\Lcb U}{U}_{L^2 \times L^2} = \langle\Lpp U,U\rangle_{\HH^*,\HH} = \langle \RLpp U,U\rangle_{\HH}. 
\]
We have 
\begin{equation} \label{eq-ker}
\ker(\Lcb) = \ker(\RLpp).
\end{equation}
Moreover
\begin{equation} \label{eq-Sp-ess-pos}
\inf \s_\ess (\Lcb) > 0 \quad \Longleftrightarrow \quad \inf \s_\ess (\RLpp) > 0,
\end{equation}
and, in this case, 
\begin{equation} \label{eq-eg-n}
n(\Lcb) = n(\RLpp).
\end{equation}
\end{proposition}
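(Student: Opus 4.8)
The plan is to relate the operators $\Lcb$ on $L^2\times L^2$ and $\RLpp=\tilde R^{-1}\Lpp$ on $\HH$ by exhibiting $\Lcb$ as a bounded perturbation of a ``nice'' operator, and then to transport spectral information between the two Hilbert-space realizations using the identity $\innp{\Lcb U}{U}_{L^2\times L^2}=\langle\RLpp U,U\rangle_\HH$ on the common form domain $D_\b\times L^2$. First I would verify the quadratic-form identity: for $U=(u,v)\in D_\b\times L^2$, integration by parts in the first component produces exactly the boundary term $\b|u(0)|^2$ coming from the jump condition in $D_\b$, which is precisely the term appearing in $\langle\Lpp U,U\rangle_{\HH^*,\HH}$ via \eqref{expr-Lb}; the $i\o$ cross-terms cancel as usual. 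This is the routine part. Selfadjointness and semiboundedness of $\Lcb$ on $L^2\times L^2$ follow because $\Lcb$ differs from $\mathrm{diag}(-\partial_x^2+m^2,\,\mathrm{Id})$ with domain $D_\b\times L^2$ (a selfadjoint, bounded-below operator — this is the $\delta$-interaction Schr\"odinger operator of \cite{Albeverio}) by the bounded, symmetric, relatively compact perturbation $U\mapsto(-\f_\b^{p-1}u-(p-1)\f_\b^{p-1}\Re u+i\o v,\,-i\o u)$, whose first component is multiplication by an $L^\infty$ (in fact exponentially decaying) function.

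Next I would prove \eqref{eq-ker}. If $U\in\ker\RLpp\subset\HH$, then $\Lpp U=0$ in $\HH^*=H^{-1}\times L^2$; reading off the second component gives $v=i\o u\in H^1$, and the first component together with Lemma~\ref{lem-res-u}-type elliptic regularity (the equation is $-u''+\text{(bounded)}u = \text{(bounded)}$ away from $0$, with the jump condition forced by the $\b\d u$ distribution) upgrades $u$ to $\Hds$ and shows $u\in D_\b$, hence $U\in\Dom(\Lcb)$ and $\Lcb U=0$; the reverse inclusion is immediate since $\Dom(\Lcb)\subset\HH$ and the two operators agree there. For \eqref{eq-Sp-ess-pos} and \eqref{eq-eg-n} the key observation is that $\tilde R$ is a topological isomorphism $\HH\to\HH^*$ with $\langle\tilde R U,U\rangle_{\HH^*,\HH}=\nr{U}_\HH^2$, so for any $c\in\R$ one has the equivalence, for $U\in D_\b\times L^2$,
\[
\langle(\Lpp-c\tilde R)U,U\rangle_{\HH^*,\HH}=\innp{\Lcb U}{U}_{L^2\times L^2}-c\nr{U}_\HH^2 .
\]
Since the two norms $\nr{U}_\HH^2$ and $\nr{U}_{L^2\times L^2}^2$ are not equivalent, one cannot directly match spectra; instead the classical route is via the min-max principle together with the compactness of the perturbation.

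Concretely, I would argue as follows. The essential spectrum of $\Lcb$ equals that of $\mathrm{diag}(-\partial_x^2+m^2,\mathrm{Id})$ on $D_\b\times L^2$, which is $[\min(m^2,1),\infty)\cup\{1\}$ up to the discrete $\delta$-eigenvalue, hence $\inf\s_\ess(\Lcb)=\min(m^2,1)>0$; similarly, writing $\RLpp=\tilde R^{-1}\mathrm{diag}(-\partial_x^2+1,\mathrm{Id})+\text{compact}$ shows $\inf\s_\ess(\RLpp)>0$ as well, which already gives \eqref{eq-Sp-ess-pos}: in fact \emph{both} sides hold unconditionally here, so the stated equivalence is trivially true (the phrasing anticipates a more general setting). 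Granting $\inf\s_\ess>0$ on both sides, $n(\Lcb)$ and $n(\RLpp)$ are finite and, by the min-max principle, equal to the maximal dimension of a subspace on which the respective quadratic form is negative-definite. Because $\langle\RLpp U,U\rangle_\HH=\innp{\Lcb U}{U}_{L^2\times L^2}$ on $D_\b\times L^2$, a subspace $V\subset D_\b\times L^2$ on which one form is negative-definite is a subspace on which the other is too; the point that must be checked is that the infimum in the min-max for $\RLpp$ can be computed over subspaces of the \emph{form core} $D_\b\times L^2$ (true, since $D_\b\times L^2$ is a core for the form of $\RLpp$ — indeed $\Lpp$ is bounded on all of $\HH$, so the ``form domain'' is $\HH$ and $D_\b\times L^2$ is dense in it), and symmetrically for $\Lcb$ (its form domain is $H^1\times L^2$, on which $D_\b\times L^2$ is dense — $D_\b$ has codimension behaviour controlled as in Lemma~\ref{lem-DD-dense}). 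Hence the negative-eigenspace dimensions agree, giving \eqref{eq-eg-n}.

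\textbf{Main obstacle.} The delicate point is the min-max comparison: the two forms coincide \emph{pointwise} on $D_\b\times L^2$, but they live over different Hilbert-space norms, so ``negative-definite'' must be interpreted carefully, and one must ensure $D_\b\times L^2$ is simultaneously a form core for both operators and that no negative directions are lost by restricting min-max to it. I expect this density/core argument — rather than the elliptic-regularity bootstrap in \eqref{eq-ker} or the routine integration by parts — to be where the real work lies.
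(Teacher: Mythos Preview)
Your proposal is largely sound --- the form identity, the selfadjointness/semiboundedness of $\Lcb$ (as a bounded symmetric perturbation of the $\delta$-Schr\"odinger operator), the kernel equality via elliptic regularity, and the equality of negative indices via ``maximal dimension of a negative subspace'' are all correct, and the last of these is essentially the paper's min-max argument in different words. The paper's route to selfadjointness (Kato's representation theorem for the closed form $Q(U,V)=\langle\Lpp U,V\rangle_{\HH^*,\HH}$) and to the kernel equality (directly from the definition of $\Dom(T)$) is slicker but not materially different.

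There is, however, a genuine gap in your treatment of \eqref{eq-Sp-ess-pos}. You claim the perturbation
\[
U=(u,v)\longmapsto\bigl(-\f_\b^{p-1}u-(p-1)\f_\b^{p-1}\Re u+i\o v,\,-i\o u\bigr)
\]
is relatively compact with respect to $\mathrm{diag}(-\partial_x^2+m^2,\mathrm{Id})$ on $D_\b\times L^2$. The multiplication terms are indeed relatively compact (decaying coefficient), but the off-diagonal pieces $v\mapsto i\o v$ (from $L^2$ to $L^2$) and $u\mapsto -i\o u$ (from $H^{2,*}$ to $L^2$ on the whole line) are \emph{not}. Weyl's theorem therefore does not apply, and in fact $\s_\ess(\Lcb)$ is \emph{not} equal to $[\min(m^2,1),\infty)\cup\{1\}$: as computed later in Remark~\ref{biglambda}, it is $[\s_1,1]\cup[\s_2,\infty)$ with $\s_1=\L_-^{-1}(m^2-\o^2)$, which depends on $\o$ through $\L$. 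The same objection applies to your ``$\RLpp=\tilde R^{-1}\mathrm{diag}(-\partial_x^2+1,\mathrm{Id})+\text{compact}$'' decomposition. So your direct computation of both essential spectra fails, and with it your proof of \eqref{eq-Sp-ess-pos}.

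The paper avoids this by proving the equivalence \eqref{eq-Sp-ess-pos} abstractly, never computing either essential spectrum. The key extra ingredient is the G\aa rding-type inequality
\[
\langle\Lpp U,U\rangle_{\HH^*,\HH}\;\geq\;\tfrac12\nr{U}_\HH^2-C\nr{U}_{L^2\times L^2}^2,
\]
which lets one upgrade an estimate $\langle\Lcb U,U\rangle\geq\s_0\nr{U}_{L^2\times L^2}^2$ on a finite-codimension subspace to $\langle\RLpp U,U\rangle_\HH\geq\tfrac\eta2\nr{U}_\HH^2$ on (a dense subset of) the same subspace; min-max then gives both $\inf\s_\ess(\RLpp)>0$ and the eigenvalue count. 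The reverse direction is easier since $\nr{\cdot}_\HH\geq\nr{\cdot}_{L^2\times L^2}$. This interpolation between the two norms is precisely the ``real work'' you anticipated, but it enters for the essential spectrum, not only for the negative index.
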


\begin{proof}
\stepp For $U,V \in \HH$, we set $Q(U,V) = \langle\Lpp U,V\rangle_{\HH^*,\HH}$. This defines on $L^2 \times L^2$ a symmetric bilinear form with domain $\Dom(Q) = \HH$. Using a trace inequality as in \eqref{trace}, we can check that $Q$ is bounded from below and closed. We denote by $T$ the corresponding selfadjoint operator given by the Representation Theorem (see for instance \cite[VI-Theorem~2.1]{Kato} for sesquilinear complex forms, the symmetric case being analogous for real bilinear forms). In particular,
\[
\Dom(T) = \left\{ U \in \HH \st V \mapsto Q(U,V) \text{ is a continuous linear functional on } L^2 \times L^2 \right\},
\]
and 
\begin{equation} \label{eq-TUV}
\innp{TU}{V}_{L^2 \times L^2} = Q(U,V), \quad \text{for all} \ U \in \Dom(T), V \in \HH.
\end{equation}
Since $\innp{\Lcb U}{V}_{L^2 \times L^2} = Q(U,V)$ for all $U \in \Dom(T)$ and $V \in \HH$, we have $\Dom(\Lob) \subset \Dom(T)$ and $\Lob = T$ on $\Dom(\Lob)$. Now let $U = (u,v) \in \Dom(T)$. Writing \eqref{eq-TUV} with $V = (w,0)$ for any $w \in C_0^\infty(\R^*)$ proves that $u \in \Hds$. Then, with $w \in C_0^\infty(\R)$ such that $w(0) = 1$ or $w(0) = i$, we obtain $u \in D_\b$, so $U \in \Dom(\Lcb)$. This means that $\Lcb = T$, and the first part of the proposition is proved.

\stepp Let $U = (u,v) \in \ker(\Lcb)$. In particular we have $U \in \HH$ and 
$\langle \RLpp U,W \rangle_{\HH} = \langle \Lpp U,W \rangle_{\HH^*,\HH} = 0$ for all $W \in \HH$, so $U \in \ker(\RLpp)$. 
Conversely, if $U \in \ker(\RLpp )$
then $\langle\Lpp  U,W \rangle = \innp{0}{W}_{L^2 \times L^2}$ for all $W \in \HH$, 
so $U \in \Dom(\Lcb)$ and $\Lcb U = 0$. This proves \eqref{eq-ker}.

\stepp Now suppose that $\inf \s_\ess(\Lcb) > 0$. 
Since $\Lcb$ is bounded from below, it has a finite number $\tilde m$ 
of non-positive eigenvalues (counted with multiplicities). 
We denote by $\tilde \Th$ the subspace of $L^2 \times L^2$ generated by the 
corresponding eigenvectors, and by $\tilde \Th^\bot$ the orthogonal complement 
of $\tilde \Th$ in $L^2 \times L^2$. We also set $\tilde \Th_1^\bot = \tilde \Th^\bot \cap \HH$. 
Since $\tilde \Th \subset \Dom(\Lcb) \subset \HH$, $\tilde \Th$ and $\tilde \Th_1^\bot$ 
are complementary subspaces of $\HH$.

There exists $\s_0 > 0$ such that $\s(\Lcb) \cap (0,\s_0) = \emptyset$. Then, 
for all $U \in \Dom(\Lcb) \cap \tilde \Th^\bot$, we have 
\[
\langle \RLpp U , U \rangle_{\HH} = \langle \Lpp  U , U \rangle_{\HH^*,\HH} 
= \langle \Lcb U , U \rangle_{L^2 \times L^2} \geq \s_0 \nr{U}_{L^2 \times L^2}^2.
\]
On the other hand, by the trace inequality there exists $C \geq 0$ such that, for all $U \in \HH$,
\[
\langle \Lpp  U,U\rangle_{\HH^*,\HH} \geq \frac {\nr{U}_\HH^2} 2 - C \nr{U}_{L^2 \times L^2}.
\]
Thus, for $\eta \in (0,1)$ and $U \in \Dom(\Lcb) \cap \tilde \Th_1^\bot$, we have 
\[
\langle \RLpp U,U\rangle_{\HH} \geq \frac {\eta}2 \nr{U}_{\HH}^2 - \eta C \nr{U}_{L^2 \times L^2} 
+ (1-\eta) \s_0 \nr{U}_{L^2 \times L^2}^2.
\]
For $\eta > 0$ small enough, this yields 
\[
\langle\RLpp U,U\rangle_{\HH} 
\geq \frac {\eta}2 \nr{U}_{\HH}^2, \quad\text{for all} \ U \in \Dom(\Lcb) \cap \tilde \Th_1^\bot.
\]
But $\Dom(\Lcb) \cap \tilde \Th_1^\bot$ is dense in $\Th_1^\bot$, so
\[
\langle\RLpp U,U\rangle_{\HH} \geq \frac {\eta}2 \nr{U}_{\HH}^2,
\quad\text{for all} \ U \in \tilde \Th_1^\bot.
\]
Since $\tilde \Th_1^\bot$ is of codimension $m$ in $\HH$, 
the Min-Max Principle (see, e.g., Theorem XIII.1 in \cite{rs4}) implies
that $\inf \s_\ess(\RLpp) > 0$ and that $\RLpp$ 
has at most $\tilde m$ negative eigenvalues (counted with multiplicities).

Conversely, assume that $\inf \s_\ess(\RLpp) > 0$. Since $\RLpp$ is bounded, 
it has a finite number $m$ of non-positive eigenvalues (counted with multiplicities). 
We denote by $\Th$ the subspace of $\HH$ generated by the corresponding eigenvectors, 
and by $\Th^\bot$ the orthogonal complement of $\Th$ in $\HH$. 
There exists $\s_1 > 0$ such that $\s(\RLpp) \cap (0,\s_1) = \emptyset$. 
Then, for all $U \in \Th^\bot$, we have 
\[
\langle \Lpp  U, U \rangle_{\HH^*,\HH} 
= \langle \RLpp U, U \rangle_{\HH} \geq \s_1 \nr{U}_\HH^2 \geq \s_1 \nr{U}_{L^2 \times L^2}^2.
\]
We recall that $\HH$ is the form domain of $\Lcb$ and that $\Lcb$ 
is associated to the form $Q$, 
so by the form version of the Min-Max Principle (see Theorem XIII.2 in \cite{rs4}), 
we have $\inf \s_\ess (\Lcb) \geq \s_1 > 0$ and $\Lcb$ 
has at most $m$ non-positive eigenvalues (counted with multiplicities). 

We have thus proved \eqref{eq-Sp-ess-pos} and that, in this case, the operators $\RLpp$ and $\Lcb$ have 
the same number of non-positive eigenvalues. With \eqref{eq-ker}, this gives \eqref{eq-eg-n}.
\end{proof}

Since $\Lcb$ is not $\C$-linear, it is usual to split functions into real and imaginary parts. Then, the operator $\Lcb$ acting on pairs of complex-valued functions is formally equivalent to the following operator acting on quadruplets of real-valued functions:
\[
\begin{pmatrix}
\Lp + \o^2 & 0 & 0 & -\omega \\
0  & \Lm + \o^2 & \omega & 0 \\
0 &\omega& 1 & 0 \\
-\omega& 0 & 0& 1\\
\end{pmatrix},
\]
where 
\begin{align}
\Lp u &=-u''+(m^2-\omega^2)u-p  \f^{p-1}_{\b} u \label{op1},
\\
\Lm u &= -u'' + (m^2-\omega^2)u -  \f^{p-1}_{\b} u \label{op2}.
\end{align}
Here, $\Lp$ and $\Lm$ are $\R$-linear operators acting on a space of real-valued functions. However, we are going to use some spectral arguments which are more conveniently written with complex operators.

We denote by $L^2_\C$ the Lebesgue space $L^2(\R,\C)$ endowed with its usual complex structure. Then we define $H^1_\C$ and $H^{2,*}_\C$ accordingly. We also define $D^\b_\C$ as $D_\b$, with $\Hds$ replaced by $H^{2,*}_\C$. Then we define the operators $\Lp$ and $\Lm$ by $\Dom(\Lp) = \Dom(\Lm) = D^\b_\C \times L^2_\C$ and, for $u$ in $D^\b_\C \times L^2_\C$, $\Lp u$ and $\Lm u$ are defined by \eqref{op1} and \eqref{op2}. These are in particular $\C$-linear operators.\\

For $\l \in \R \setminus \{1\}$ we set (see Figure \ref{fig:Lambda})
\[
\Lambda(\lambda):=\lambda+\frac{\lambda\omega^2}{1-\lambda}.
\]

\begin{proposition} \label{prop-Lcb-Lp-Lm}
The operators $\Lp$ and $\Lm$ are selfadjoint and bounded from below on $L^2_\C$. Moreover, for $\l \in \R \setminus \{1\}$,
\begin{enumerate}[\rm (i)]
\item $\l \in \s(\Lob)$ if and only if $\L(\l) \in \s(\Lp) \cup \s(\Lm)$,
\item we have 
\begin{equation} \label{eq-dim-ker}
\dim(\ker( \Lob-\l)) = \dim(\ker( \Lp-\L(\l))) + \dim(\ker( \Lm-\L(\l))),
\end{equation}
and in particular,
\begin{equation} \label{eq-nL}
n(\RLpp) = n(\Lp) + n(\Lm),
\end{equation}
\item $\l \in \s_\ess(\Lob)$ if and only if $\L(\l) \in \s_\ess(\Lp) \cup \s_\ess(\Lm)$. 
\end{enumerate}
\end{proposition}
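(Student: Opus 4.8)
The plan is to reduce the spectral analysis of the $4\times 4$ block operator to that of the two scalar Schr\"odinger-type operators $\Lp$ and $\Lm$ by a purely algebraic manipulation, exploiting the block structure. First I would write the operator $\Lob$ (after splitting into real and imaginary parts) in the displayed $4\times 4$ matrix form acting on quadruplets $(u_1,u_2,v_1,v_2)$, where $u=u_1+iu_2$ and $v=v_1+iv_2$; the pairing is such that $(u_1,v_2)$ couple through the $\pm\omega$ off-diagonal entries and likewise $(u_2,v_1)$. The eigenvalue equation $\Lob U = \l U$ then becomes, in the $v$-components, $\omega u_2 + v_1 = \l v_1$ and $-\omega u_1 + v_2 = \l v_2$. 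For $\l\neq 1$ these solve uniquely for $v_1,v_2$ in terms of $u_1,u_2$, namely $v_1 = \frac{\omega}{1-\l}u_2$ and $v_2 = \frac{-\omega}{1-\l}u_1$ (I would double-check the signs against \eqref{expr-Lb}). Substituting back into the $u$-equations produces, for $u_1$, the equation $(\Lp + \omega^2)u_1 - \omega v_2 = \l u_1$, i.e. $\Lp u_1 + \omega^2 u_1 + \frac{\omega^2}{1-\l}u_1 = \l u_1$, which is exactly $\Lp u_1 = \bigl(\l - \omega^2 - \frac{\omega^2}{1-\l}\bigr)u_1 = \bigl(\l - \frac{\lambda\omega^2}{1-\lambda} - \omega^2 \cdot \text{(collect)}\bigr)u_1$. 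Here I should be slightly careful: writing $\l - \omega^2 - \frac{\omega^2}{1-\l} = \l - \frac{\omega^2(1-\l) + \omega^2}{1-\l} = \l - \frac{\omega^2(2-\l)}{1-\l}$; a short computation shows this equals $\Lambda(\l) = \l + \frac{\l\omega^2}{1-\l}$ precisely when one tracks the $+\omega^2$ on the diagonal of the first block, so that the net shift is $\Lambda(\l)$. Thus $u_1 \in \ker(\Lp - \Lambda(\l))$ and, symmetrically, $u_2 \in \ker(\Lm - \Lambda(\l))$ (the roles of $\Lp$ and $\Lm$ being assigned by which block the $p\f^{p-1}$ versus $\f^{p-1}$ coefficient sits in).

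This correspondence $(u_1,u_2,v_1,v_2)\mapsto(u_1,u_2)$ is, for fixed $\l\neq 1$, a linear isomorphism from $\ker(\Lob-\l)$ onto $\ker(\Lp-\Lambda(\l))\oplus\ker(\Lm-\Lambda(\l))$: it is injective because the $v$-components are determined by the $u$-components, and surjective because given $u_1,u_2$ in the respective kernels one recovers $v_1,v_2$ by the formulas above and checks the full system is satisfied. One also has to verify that the domain constraints match — the jump condition in $D_\b$ involves only $u$ (not $v$), and the $v$-components built from $u_1,u_2$ automatically lie in $L^2_\C$ — so the reconstructed quadruplet genuinely lies in $\Dom(\Lob)$ iff $(u_1,u_2)$ lies in $D^\b_\C$. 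This gives \eqref{eq-dim-ker} directly, and \eqref{eq-nL} follows by applying \eqref{eq-dim-ker} at each negative eigenvalue: for $\l<0$ one has $1-\l>0$, so $\Lambda'(\l) = 1 + \frac{\omega^2}{(1-\l)^2}>0$, and moreover $\Lambda(0)=0$ and $\Lambda(\l)<0$ for $\l<0$; hence $\Lambda$ restricts to an increasing bijection from $(-\infty,0)$ onto $(-\infty,0)$, so it sets up a bijection between the negative eigenvalues of $\Lob$ and those of $\Lp$ together with those of $\Lm$, preserving multiplicities by \eqref{eq-dim-ker}, and $n(\RLpp)=n(\Lob)$ by Proposition~\ref{prop-RLpp-Lcb}. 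Statement~(i) is the set-level version of this kernel computation: $\l\in\s(\Lob)$ iff $\Lob-\l$ fails to be boundedly invertible, and the block reduction (now carried out not just on kernels but on the resolvent, inverting the $2\times 2$ block in the $v$-variables explicitly since $\l\neq 1$) shows $\Lob-\l$ is invertible iff both $\Lp-\Lambda(\l)$ and $\Lm-\Lambda(\l)$ are, giving (i).

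For part~(iii) I would again use the block reduction: the Schur-complement computation expresses $(\Lob-\l)^{-1}$, when it exists, as a bounded operator built from $(\Lp-\Lambda(\l))^{-1}$, $(\Lm-\Lambda(\l))^{-1}$ and bounded multiplication/scalar operators; more robustly, I would argue that $\Lob - \l$ differs from the ``decoupled'' operator $\mathrm{diag}(\Lp+\omega^2-\l,\ \Lm+\omega^2-\l,\ 1-\l,\ 1-\l)$ by a bounded operator, and that the essential spectrum is governed by the unbounded blocks. The cleanest route is: the relatively compact perturbations here are the potential terms $\f_\b^{p-1}$, which decay exponentially, so $\s_\ess(\Lp) = \s_\ess(\Lm) = [m^2-\omega^2,\infty)$ by Weyl's theorem; then $\l\in\s_\ess(\Lob)$ iff the constant-coefficient version of the block operator is non-invertible at $\l$ modulo compacts, which by the same Schur complement reduces to $\Lambda(\l)\in\s_\ess(\Lp)\cup\s_\ess(\Lm)$. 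The main obstacle I anticipate is purely bookkeeping: getting every sign right in the substitution that produces $\Lambda(\l)$, and making sure the two blocks are correctly matched to $\Lp$ (the one with coefficient $p\f_\b^{p-1}$, coming from the real part) versus $\Lm$ (coefficient $\f_\b^{p-1}$, imaginary part) — a sign error there would not change the final statement but would derail the proof. A secondary subtlety is the careful handling of the domain in the reconstruction map, which one must spell out to legitimately claim the isomorphism of kernels rather than just equality of dimensions; but this is routine given the description of $\Dom(\Lob)$ in terms of $D_\b\times L^2$.
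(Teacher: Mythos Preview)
Your approach via block reduction is essentially the paper's: write $(\Lcb-\l)U=F$ as a $4\times 4$ system in $(u_1,u_2,v_1,v_2)$, eliminate $v_1,v_2$ for $\l\neq 1$, and reduce to equations for $(\Lp-\Lambda(\l))u_1$ and $(\Lm-\Lambda(\l))u_2$. The sign bookkeeping you flag is indeed just bookkeeping (the correct formulas are $v_2=\frac{\omega u_1}{1-\l}$, $v_1=\frac{-\omega u_2}{1-\l}$, and then $\l-\omega^2+\frac{\omega^2}{1-\l}=\Lambda(\l)$ on the nose).

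There is, however, one genuine subtlety you overlook. Your map $(u_1,u_2,v_1,v_2)\mapsto(u_1,u_2)$ lands in pairs of \emph{real-valued} functions, since $u=u_1+iu_2$ is the decomposition into real and imaginary parts; but $\Lp$ and $\Lm$ are defined as $\C$-linear operators on $L^2_\C$, and the right-hand side of \eqref{eq-dim-ker} records \emph{complex} dimensions (the paper remarks on this just after the statement). The paper handles this explicitly: writing $K^+_\R(\l)$ and $K^-_\R(\l)$ for the real-valued elements of $\ker(\Lp-\Lambda(\l))$ and $\ker(\Lm-\Lambda(\l))$ respectively, one must check that $\dim_\R K^\pm_\R(\l)=\dim_\C\ker(L^\pm_\b-\Lambda(\l))$, which holds because the operators have real coefficients. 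Without this step your isomorphism only yields $\dim_\R\ker(\Lob-\l)=\dim_\R K^+_\R(\l)+\dim_\R K^-_\R(\l)$, which is not yet \eqref{eq-dim-ker}.

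Two smaller points. First, you do not address the selfadjointness and lower-boundedness of $\Lp,\Lm$; the paper handles this via the representation theorem applied to the sesquilinear forms $q^\pm_\b$. Second, for (iii) the paper simply derives it from (i) and (ii): since $\Lambda$ is a homeomorphism on each of $(-\infty,1)$ and $(1,\infty)$, it preserves the property of being an isolated eigenvalue of finite multiplicity, so no separate Weyl-type argument is needed at this stage.
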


In \eqref{eq-dim-ker} and \eqref{eq-nL}, the left-hand sides are dimensions of real vector spaces while the right-hand sides are dimensions of complex vector spaces.

\begin{proof}
\stepp As in the proof of Proposition \ref{prop-RLpp-Lcb}, we can check that $\Lp$ and $\Lm$ are the selfadjoint operators corresponding to the sesquilinear forms 
\begin{equation} \label{def-qp}
q_\b^+ : (u,w) \mapsto \innp{u'}{w'}_{L^2_\C} + (m^2-\o^2) \innp{u}{w}_{L^2_\C} 
- p \langle \f_\b^{p-1} u,w\rangle_{L^2_\C} + \b u(0) \bar w(0)
\end{equation}
and 
\begin{equation} \label{def-qm}
q_\b^- : (u,w) \mapsto \innp{u'}{w'}_{L^2_\C} + (m^2-\o^2) \innp{u}{w}_{L^2_\C} 
-  \langle \f_\b^{p-1} u,w\rangle_{L^2_\C} + \b u(0) \bar w(0),
\end{equation}
which are closed, symmetric and bounded from below.

\stepp Let $\l \in \R$. Let $U = (u,v)$ and $F = (f,g)$ in $L^2 \times L^2$. We write $u = u_1 + i u_2$ where $u_1$ and $u_2$ are real-valued. We use similar notation for $v$, $f$ and $g$. Then $u$ belongs to $D_\b$ if and only if $u_1$ and $u_2$ belong to $D^\b_\C$ and in this case
\begin{equation*} 
(\Lcb - \l ) U = F \quad \Longleftrightarrow \quad 
\left\{
\begin{array}{rcl}
(\Lp + \o^2 - \l) u_1 - \o v_2 &=&  f_1,\\
(\Lm + \o^2 - \l) u_2 + \o v_1 &=& f_2,\\
(1-\l) v_1 + \o u_2 &=& g_1,\\
(1-\l) v_2 - \o u_1 &=& g_2.
\end{array}
\right.
\end{equation*}
If $\l \neq 1$ this gives
\begin{equation} \label{systeme}
(\Lcb - \l ) U = F \quad \Longleftrightarrow \quad 
\left\{
\begin{array}{rcl}
(\Lp - \L(\l)) u_1 &=&  f_1 + \frac {\o g_2}{1-\l},\\
(\Lm - \L(\l)) u_2 &=& f_2 - \frac {\o g_1}{1-\l},\\
v_1 &=& \frac {g_1 - \o u_2}{1-\l},\\
v_2 &=& \frac {g_2 + \o u_1}{1-\l}.
\end{array}
\right.
\end{equation}

\stepp We set $K^+(\l) = \ker(\Lp - \L(\l))$ and denote by $K^+_\R(\l)$ the $\R$-linear subspace of real-valued functions in $K^+(\l)$. Given $u \in D^\b_\C$, we have $u \in K^+(\l)$ if and only if $\Re(u)$ and $\Im(u)$ are in $K^+_\R(\l)$. A family of linearly independent vectors in $K^+_\R(\l)$ is also a family of linearly independent vectors in $K^+(\l)$, so $\dim_\R (K^+_\R(\l)) \leq \dim_\C(K^+(\l))$. In particular, if the left-hand side is infinite, then so is the right-hand side. Now assume that $\dim_\R (K^+_\R(\l))$ is finite (possibly 0) and consider a basis $e = (e_1,\dots,e_m)$ of $K^+_\R(\R)$ (with $m \in \N$). Let $u \in K^+(\l)$. Then $\Re(u)$ and $\Im(u)$ belong to $K^+_\R(\l)$ and are $\R$-linear combinations of vectors in $e$, so $u$ is a $\C$-linear combination of vectors in $e$. This proves that 
\[
\dim_\C K^+(\l) = \dim_\R K^+_\R(\l).
\]
We similarly define $K^-(\l)$ and $K^-_\R(\l)$ and see that $\dim_\C K^-(\l) = \dim_\R K^-_\R(\l)$.

If $e_1^+,\dots,e_{m_+}^+$ are linearly independent vectors in $K^+_\R(\l)$ and $e_1^-,\dots,e_{m_-}^-$ are linearly independent vectors in 
$K^-_\R(\l)$ ($m_\pm$ may be zero), then 
\begin{equation} \label{basis-ker}
\left( e_1^+  , \frac {i \o e_{1}^+}{1-\l} \right) , \dots, 
\left( e_1^+  , \frac {i \o e_{m_+}^+}{1-\l} \right), 
\left(i e_1^- , - \frac {\o e_{1}^-}{1-\l} \right), \dots, 
\left(i e_1^- , - \frac {\o e_{m_-}^-}{1-\l} \right)
\end{equation}
is a family of linearly independent vectors in $\ker(\Lob-\l)$, so 
\begin{equation} \label{dim-ker}
\dim(\ker(\Lob)-\l) \geq \dim(K^+_\R(\l)) + \dim(K^-_\R(\l)).
\end{equation}
In particular, if the right-hand side is infinite, then so is the left-hand side. 
Now assume that the right-hand side is finite. If the above families span $K^+_\R(\l)$ 
and $K^-_\R(\l)$, then \eqref{basis-ker} span $\ker(\Lcb-\L(\l))$, 
so the inequality in \eqref{dim-ker} is an equality and \eqref{eq-dim-ker} is proved. 
Since $\L$ is a bijection from $(-\infty,0)$ to itself, \eqref{eq-nL} follows.

\stepp Assume that $\L(\l) \in \rho(\Lp) \cap \rho(\Lm)$. Let $F = (f_1 + i f_2,g_1 + i g_2) \in L^2 \times L^2$. Let $(u_1,u_2,v_1,v_2)$ be the unique solution of \eqref{systeme} and $U = (u_1 + iu_2, v_1 + i v_2)$. Then $U \in \Dom(\Lcb)$ and $(\Lcb - \l)U = F$, so $(\Lcb-\l)$ is surjective. We already know that $\l$ is not an eigenvalue of $\Lcb$, so $(\Lcb - \l)$ is bijective. This implies that $\l$ is in the resolvent set of $\Lcb$. 

Conversely, assume that $\l \in \rho(\Lcb)$ and let $f = f_1 + i f_2 \in L^2_\C$. We denote by $u_1$ (resp. $u_2$) the first component of $(\Lob-\l)\inv (f_1,0,0,0)$ (resp. $(\Lob-\l)\inv (f_2,0,0,0)$). Then $u = u_1 + i u_2$ is such that $(\Lp-\L(\l)) u = f$, and we deduce that $\L(\l) \in \rho(\Lp)$. Similarly, $\L(\l) \in \rho(\Lm)$. This proves (i). Then (iii) follows from (i) and (ii).
\end{proof}

\begin{figure}[htpb!]
\centering
\includegraphics[width=0.40\linewidth]{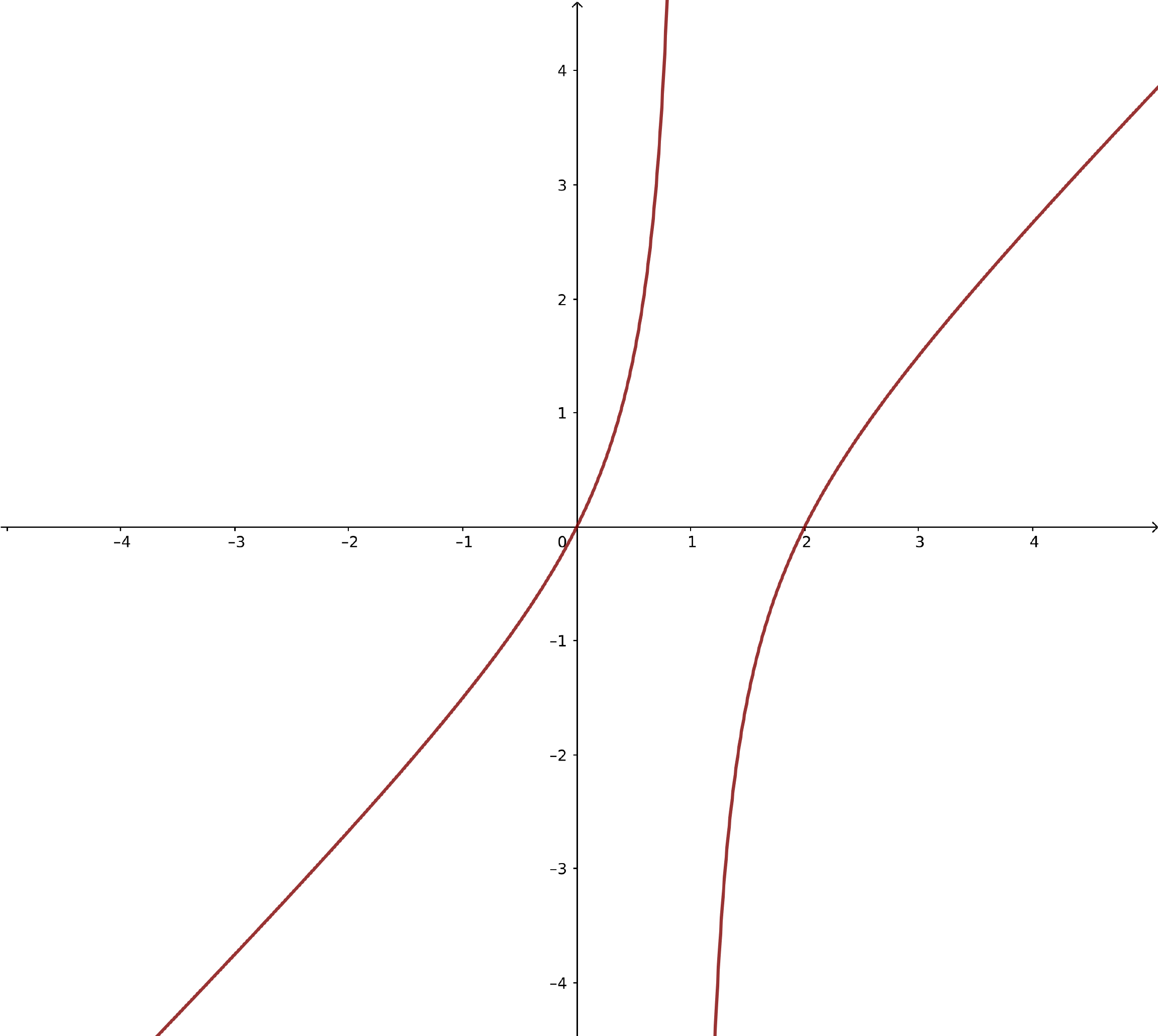}
\caption{Graph of $\lambda\mapsto\Lambda(\lambda)$}
  \label{fig:Lambda}
\end{figure}

\begin{remark} \label{biglambda}
\rm
If we denote by $\L_-$ and $\L_+$ the restrictions of $\L$ to $(-\infty,1)$ and $(1,+\infty)$, 
then $\L_- : (-\infty,1) \to \R$ and $\L_+ : (1,+\infty) \to \R$ are increasing bijections. 
Therefore,
\[
\s_\ess(\Lob) \setminus \{1\} = \L_-\inv( \s_\ess(\Lp) \cup \s_\ess(\Lm)) \cup \L_+\inv(\s_\ess(\Lp) \cup \s_\ess(\Lm)).
\]
Furthermore, since $\s_\ess(\Lp) \cup \s_\ess(\Lm)$ contains a neighborhood of $+\infty$ 
(see Proposition \ref{prop-spec-Lp-Lm} below) and $\s_\ess(\Lob)$ is closed, 
we have that $1\in\s_\ess(\Lob)$. More precisely,
\[
\s_\ess(\Lob) = [\s_1,1] \cup [\s_2,+\infty),
\]
with $\s_1 = \L_- \inv(m^2-\o^2) \in (0,1)$ and $\s_2 = \L_+ \inv(m^2-\o^2) > 1$.
\end{remark}

With Propositions \ref{prop-RLpp-Lcb} and \ref{prop-Lcb-Lp-Lm}, we can deduce 
the spectral properties of $\RLpp$ from those of $\Lp$ and $\Lm$, which we now describe.

\begin{proposition} \label{prop-spec-Lp-Lm}
Let $\b \in (-\b_0,\b_0)$.
\begin{enumerate}[\rm (i)]
\item We have $\s_\ess(\Lp) = \s_\ess(\Lm) = [m^2 - \o^2,+\infty)$.
\item The first eigenvalue of $\Lm$ is 0, it is simple, and the corresponding eigenspace 
is spanned by $\f_\b$.
\item 0 is an eigenvalue of $\Lp$ if and only if $\b = 0$. 
Moreover, $\ker(\Lpo) = \mathrm{span}(\partial_x \f_0)$ and 
the negative spectrum of $\Lpo$ reduces to a simple eigenvalue.
\end{enumerate}
\end{proposition}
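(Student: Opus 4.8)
The plan is to analyze $\Lp$ and $\Lm$ as one-dimensional Schr\"odinger operators perturbed by a $\delta$-interaction of strength $\b$, using that the profile $\f_\b$ solves the stationary equation \eqref{stat2}--\eqref{jump2}. The essential spectrum is the easiest part: away from $x=0$ the potentials $-p\f_\b^{p-1}$ and $-\f_\b^{p-1}$ decay exponentially (by the explicit formula \eqref{def-tilde-phi}), so they are relatively compact perturbations of $-\partial_x^2 + (m^2-\o^2)$ on $H^{2,*}_\C$ with the jump condition in $D^\b_\C$; by Weyl's theorem $\s_\ess(\Lp)=\s_\ess(\Lm)=[m^2-\o^2,+\infty)$, proving (i).

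For (ii), note that differentiating \eqref{stat2} (or rather using the equation itself) shows that $\f_\b$ satisfies $-\f_\b'' + (m^2-\o^2)\f_\b - \f_\b^{p-1}\f_\b = 0$ a.e., and the jump condition \eqref{jump2} for $\f_\b$ is exactly $\f_\b'(0^+)-\f_\b'(0^-)=\b\f_\b(0)$, so $\f_\b\in D^\b_\C$ and $\Lm\f_\b = 0$. Since $\f_\b>0$ everywhere (again from \eqref{def-tilde-phi}), it is a ground state: any eigenfunction for an eigenvalue $\leq 0$ would, by the standard Perron--Frobenius / positivity argument for $-\partial_x^2 + V + \b\d$ (the $\delta$-interaction preserves positivity-improving of the resolvent, e.g.\ via the explicit resolvent formula or the Feynman--Kac-type representation), have to be of one sign and hence proportional to $\f_\b$. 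This forces $0$ to be the lowest eigenvalue, simple, with eigenspace $\mathrm{span}(\f_\b)$, and no negative spectrum for $\Lm$; so $n(\Lm)=0$.

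For (iii), the natural candidate element of $\ker(\Lp)$ is $\partial_x\f_\b$, since differentiating \eqref{stat2} gives $-(\f_\b')'' + (m^2-\o^2)\f_\b' - p\f_\b^{p-1}\f_\b' = 0$ away from $x=0$. The point is that $\partial_x\f_\b$ lies in $D^\b_\C$ \emph{only} when $\b=0$: indeed $\f_\b$ is even, so $\f_\b'$ is odd and continuous away from $0$, and one computes from \eqref{def-tilde-phi} that $\f_\b'(0^\pm) = \mp\tfrac{\b}{2}\f_\b(0)$ up to sign; thus $\f_\b'$ has a jump in value at $0$ unless $\b=0$, and for $\b\neq 0$ it is not even in $H^1$. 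When $\b=0$, $\f_0'$ is the classical ground-state derivative, which is the (unique, up to scalar) $H^1$ solution, giving $\ker(\Lpo)=\mathrm{span}(\partial_x\f_0)$. For $\b\neq 0$ one must show $0\notin\s_p(\Lp)$: the only possible decaying solutions of $-u''+(m^2-\o^2)u-p\f_\b^{p-1}u=0$ on each half-line are spanned by the restriction of $\f_0'$-type solutions (the ODE away from $0$ is $\b$-independent), and matching them through the jump condition $u'(0^+)-u'(0^-)=\b u(0)$ has no nontrivial solution when $\b\neq0$ — this Wronskian/shooting computation is the one genuinely delicate point. Finally, the negative spectrum of $\Lp$: by Sturm oscillation theory for the $\b=0$ operator, $\Lpo$ has exactly one negative eigenvalue (the ground state of $-\partial_x^2+(m^2-\o^2)-p\f_0^{p-1}$, which has one more node below $\f_0'$), simple; for general $\b$ one uses that $q_\b^+ \geq q_0^+$ is false in general, so instead argue via continuity in $\b$ together with the fact that the eigenvalue can only cross $0$ when $0\in\s_p(\Lp)$, which by the previous step happens only at $\b=0$ — hence the number of negative eigenvalues is constant on each of $(-\b_0,0)$ and $(0,\b_0)$, and a direct check (e.g.\ testing $q_\b^+$ on $\f_\b$, for which $q_\b^+(\f_\b,\f_\b)=-(p-1)\|\f_\b\|_{p+1}^{p+1}<0$) shows it equals $1$ throughout. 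The main obstacle is the shooting/Wronskian argument establishing that $0$ is not an eigenvalue of $\Lp$ for $\b\neq 0$, since the rest reduces to standard one-dimensional Sturm--Liouville and positivity arguments adapted to the $\delta$-interaction.
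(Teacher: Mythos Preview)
Your treatment of (i) and (ii) follows the same lines as the paper: Weyl's theorem for the essential spectrum, and positivity of $\f_\b$ for the ground-state property of $\Lm$. For $\b=0$ in (iii), your Sturm-oscillation argument for $n(\Lpo)=1$ is a legitimate alternative to the paper's citation of \cite{LeCoz1}.

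There is, however, a genuine gap in your argument that $0$ is not an eigenvalue of $\Lp$ for $\b\neq0$. The claim that ``the ODE away from $0$ is $\b$-independent'' is false as stated: the potential is $-p\f_\b^{p-1}$, which depends on $\b$. The correct observation, used in the paper, is that $\f_\b'$ itself (not $\f_0'$) solves the eigenvalue equation on each open half-line (differentiate the stationary equation there), so any $u\in\ker(\Lp)$ satisfies $u=\mu_\pm\f_\b'$ on $(0,\pm\infty)$. Continuity of $u$ at $0$ together with $\f_\b'(0^+)=-\f_\b'(0^-)\neq0$ forces $\mu_-=-\mu_+$. The jump condition for $u$ then reads
\[
\mu_+\big(\f_\b''(0^+)+\f_\b''(0^-)\big)=\b\, u(0)=\tfrac{\b^2}{2}\,\mu_+\f_\b(0),
\]
and substituting $\f_\b''(0^\pm)=(m^2-\o^2)\f_\b(0)-\f_\b^p(0)$ from the stationary equation together with the explicit value $\f_\b^{p-1}(0)=\tfrac{p+1}{2}\big(m^2-\o^2-\tfrac{\b^2}{4}\big)$ yields $\mu_+\cdot\tfrac{p+1}{2}=\mu_+$, hence $\mu_+=0$. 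This is the ``delicate point'' you flagged but did not carry out; note that it is not a Wronskian computation but an algebraic identity hinging on the explicit form of $\f_\b(0)$.

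Finally, your last paragraph overshoots the proposition, which only claims $n(\Lpo)=1$ at $\b=0$. More seriously, your conclusion ``equals $1$ throughout'' is wrong: Proposition~\ref{prop-n-Lp} of the paper shows $n(\Lp)=2$ for all $\b>0$. The test $q_\b^+(\f_\b,\f_\b)<0$ gives only the lower bound $n(\Lp)\geq1$, and the second eigenvalue that crosses zero at $\b=0$ moves \emph{down} into the negative spectrum as $\b$ increases past $0$, not up.
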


\begin{proof}
It is known from \cite[Theorem I-3.1.4]{Albeverio} that the essential spectrum of 
$-\partial_{x}^2 + (m^2-\o^2)$ is $[m^2-\o^2,+\infty)$. 
As $\Lp$ and $\Lm$ are relatively compact perturbations of this operator, 
(i) follows from the Weyl Theorem (see, e.g., \cite[IV-Theorem~5.35]{Kato}).

Since $\f_\b \in D_\b$, $\Lm  \f_\b = 0$ and $ \f_\b > 0$, the first eigenvalue of $\Lm$ is $0$, it is
simple and the rest of the spectrum is positive (see, e.g., \cite[Chapter~2]{Berezin}). This proves (ii).

As for (iii), we observe that $\ffi_\b$ satisfies
\begin{equation} \label{spect4}
-\ffi''_\b + (m^2-\o^2)  \ffi_\b -  \ffi^{p}_\b = 0
\end{equation}
on $(-\infty,0)$ and on $(0,+\infty)$.
When $\b = 0$, $ \f_0$ is smooth and \eqref{spect4} holds on $\R$. 
After differentiation, we see that $ \f'_0$ belongs to $\ker(\Lpo)$. 
By Theorem 3.3 in \cite[Chapter~2]{Berezin}, 0 is a simple eigenvalue of $\Lpo$ 
and the corresponding eigenspace is spanned by $\f_0'$. 
Moreover, by \cite[Lemma 4.16]{LeCoz1}, $\Lpo$ has one simple negative eigenvalue.

Now assume that $\b \neq 0$. Let $u \in \ker(\Lp)$. In particular, $u$ satisfies
\begin{equation} \label{eq-diff-phi-beta}
- u'' + (m^2-\omega^2) u - p  \ffi^{p-1}_\b u=0
\end{equation}
on $(-\infty,0)$ and on $(0,+\infty)$. 
Since $ \f_\b'$ is also a solution of \eqref{eq-diff-phi-beta} on $(-\infty,0)$ and $(0,+\infty)$, 
there exist $\m_-,\m_+ \in \R$ such that $u = \m_-  \f_\b'$ on $(-\infty,0)$ and $u = \m_+  \f_\b'$ on $(0,+\infty)$. 
As $u$ is continuous at 0, we have 
\begin{equation} \label{eq-u0}
\m_-  \f_\b'(0^-) = u(0) = \m_+  \f_\b'(0^+).
\end{equation}
Thus $\m_- = - \m_+$ because $ \f_\b'(0^+) = -  \f_\b'(0^-) \neq 0$. Moreover, $ \f_\b$ and $u$ 
both satisfy the jump condition in \eqref{def-Db}, so 
\begin{equation} \label{eq-jump-phi-beta}
\b  \f_\b(0) =  \f_\b'(0^+) -  \f_\b'(0^-) = 2  \f_\b'(0^+)
\end{equation}
and 
\begin{equation} \label{eq-jump-u-phi}
\b u(0) = \big(u'(0^+) - u'(0^-)\big) = \m_+ \big(  \f_\b''(0^+) +  \f_\b''(0^-) \big).
\end{equation}
On the other hand, by \eqref{spect4}, 
\begin{equation} \label{eq-phi''}
 \f_\b''(0^\pm) = \lim_{x \to 0^\pm}  \f_\b''(x) = (m^2-\o^2)  \f_\b(0) -  \f_\b^p(0).
\end{equation}
Using \eqref{eq-u0}--\eqref{eq-phi''} we now have
\[
\frac {\m_+ \b^2}2  \f_\b(0) = 2 \m_+ \big( (m^2-\o^2)  \f_\b(0) -  \f_\b^p(0) \big).
\]
Since $ \f_\b(0) \neq 0$, it follows that
\[
\m_+  \f_\b^{p-1}(0) = \m_+ \left( m^2-\o^2 - \frac {\b^2} 4 \right).
\]
But a direct computation using \eqref{def-tilde-phi} gives
\[
 \f_\b^{p-1}(0) = \frac {p+1}2 \left(m^2 - \o^2 - \frac {\b^2} 4 \right).
\]
This proves that $\m_+ = 0$, whence $u = 0$. Therefore, 0 is not an eigenvalue 
of $\Lp$ when $\b \neq 0$. The proof is complete.
\end{proof}

Since $n(\Lm)=0$, \eqref{eq-nL} now gives $n(\RLpp)=n(\Lp)$.
By Proposition~\ref{prop-spec-Lp-Lm}, $\Lpo$ has a simple negative eigenvalue, 
has 0 as a simple eigenvalue, and the rest of its spectrum is positive. 
We now determine the number of negative eigenvalue of $\Lp$ for all $\b \in (-\b_0,\b_0)$ 
by a perturbation argument.

\begin{proposition}  \label{prop-n-Lp}
Let $\b \in (\b_0,\b_0)$. Then
\[
n(\Lp)=
\begin{cases}
1 & \text{if} \quad \b \leq 0, \\
2 & \text{if} \quad \b > 0.
\end{cases}
\]
\end{proposition}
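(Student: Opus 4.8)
The plan is to exploit the reflection symmetry $x\mapsto -x$. Since the $\tanh^{-1}$-term in \eqref{def-tilde-phi} depends on $|x|$, the soliton $\f_\b$ is an \emph{even} function for every $\b\in(-\b_0,\b_0)$; hence the potential $\f_\b^{p-1}$ is even and the differential expression $-\partial_x^2+(m^2-\o^2)-p\f_\b^{p-1}$ commutes with the parity involution $u(x)\mapsto u(-x)$. Writing $u=u_{\mathrm{ev}}+u_{\mathrm{odd}}$, the jump condition $u'(0^+)-u'(0^-)=\b u(0)$ defining $D^\b_\C$ only constrains the even part: for an odd function in $H^{2,*}_\C$ the derivative is automatically continuous at $0$ and the value at $0$ vanishes. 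Consequently $\Lp$ splits as an orthogonal direct sum $\Lp=L^{+,\mathrm{ev}}_\b\oplus L^{+,\mathrm{odd}}_\b$ along $L^2_\C=L^2_{\mathrm{ev}}\oplus L^2_{\mathrm{odd}}$, where $L^{+,\mathrm{odd}}_\b=-\partial_x^2+(m^2-\o^2)-p\f_\b^{p-1}$ acts on the odd functions in $H^2(\R)$ (a domain independent of $\b$), and $L^{+,\mathrm{ev}}_\b$ acts by the same expression on the even functions of $D^\b_\C$ (a Robin-type condition $u'(0^+)=\tfrac\b2 u(0)$). Thus $n(\Lp)=n(L^{+,\mathrm{ev}}_\b)+n(L^{+,\mathrm{odd}}_\b)$, and it suffices to prove $n(L^{+,\mathrm{ev}}_\b)=1$ for all $\b\in(-\b_0,\b_0)$, while $n(L^{+,\mathrm{odd}}_\b)$ equals $0$ for $\b\leq 0$ and $1$ for $\b>0$.

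Both families are norm-resolvent continuous (indeed real-analytic) on $(-\b_0,\b_0)$, since $\b\mapsto\f_\b$ is real-analytic there with $\|\f_\b\|_{L^\infty}$ locally bounded; both operators are bounded below uniformly on compact $\b$-intervals (a trace inequality as in \eqref{trace} for the even part, boundedness of $\f_\b^{p-1}$ for the odd part); and both have essential spectrum $[m^2-\o^2,+\infty)$ by the Weyl theorem, with $m^2-\o^2>0$ by \eqref{omega_range}. Hence, on any subinterval of $(-\b_0,\b_0)$ free of the eigenvalue $0$, the number of negative eigenvalues (with multiplicity) is constant: a negative eigenvalue can neither escape to $-\infty$, nor be created or absorbed at the bottom $m^2-\o^2>0$ of the essential spectrum while negative, nor change sign without crossing $0$. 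By Proposition~\ref{prop-spec-Lp-Lm}(iii), $0\in\s_p(\Lp)$ only for $\b=0$, and then $\ker(\Lpo)=\mathrm{span}(\f_0')$ with $\f_0'$ \emph{odd}; therefore $0\notin\s_p(L^{+,\mathrm{ev}}_\b)$ for every $\b$, so $n(L^{+,\mathrm{ev}}_\b)$ is constant on all of $(-\b_0,\b_0)$. At $\b=0$, $L^{+,\mathrm{ev}}_0$ is the restriction to even functions of the smooth operator $\Lpo$ of Proposition~\ref{prop-spec-Lp-Lm}(iii): its unique negative eigenvalue is simple, hence its eigenfunction is a parity eigenvector, and being a ground state it is positive, hence even; since $\ker(\Lpo)=\mathrm{span}(\f_0')$ is odd and the rest of $\s(\Lpo)$ is positive, we get $n(L^{+,\mathrm{ev}}_\b)=n(L^{+,\mathrm{ev}}_0)=1$ for all $\b\in(-\b_0,\b_0)$.

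For the odd sector, Proposition~\ref{prop-spec-Lp-Lm}(iii) gives $0\in\s_p(L^{+,\mathrm{odd}}_\b)$ exactly when $\b=0$, so $n(L^{+,\mathrm{odd}}_\b)$ is constant on each of $(-\b_0,0)$ and $(0,\b_0)$ and it remains to treat small $|\b|$. At $\b=0$ the odd function $\f_0'$ vanishes only at the origin and lies in $\ker(\Lpo)$, so $0$ is the lowest eigenvalue of $L^{+,\mathrm{odd}}_0$, it is simple, and the rest of $\s(L^{+,\mathrm{odd}}_0)$ is positive; thus $n(L^{+,\mathrm{odd}}_0)=0$. Since $\Dom(L^{+,\mathrm{odd}}_\b)$ does not depend on $\b$, we may write $L^{+,\mathrm{odd}}_\b=L^{+,\mathrm{odd}}_0-p(\f_\b^{p-1}-\f_0^{p-1})$, an analytic family of bounded perturbations, and first-order perturbation theory for the simple isolated eigenvalue $0$ produces a $C^1$ branch $\l(\b)$ with $\l(0)=0$ and
\[
\l'(0)=-\frac{p(p-1)}{\nr{\f_0'}_{L^2}^2}\int_\R\f_0^{p-2}\,\big(\partial_\b\f_\b\big)\big|_{\b=0}\,(\f_0')^2\diff x.
\]
Differentiating \eqref{def-tilde-phi} at $\b=0$ yields
\[
\big(\partial_\b\f_\b\big)\big|_{\b=0}=\frac{\tanh\!\big(\tfrac{(p-1)\sqrt{m^2-\o^2}}{2}|x|\big)}{(p-1)\sqrt{m^2-\o^2}}\,\f_0\geq 0,
\]
with strict inequality for $x\neq0$; since $\f_0>0$, the integrand above is nonnegative and positive a.e., so $\l'(0)<0$. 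Hence for small $\b>0$ we have $\l(\b)<0$, and as $0$ was isolated at the bottom of $\s(L^{+,\mathrm{odd}}_0)$ no other eigenvalue of $L^{+,\mathrm{odd}}_\b$ is $\leq0$, so $n(L^{+,\mathrm{odd}}_\b)=1$; for small $\b<0$ we have $\l(\b)>0$, so $n(L^{+,\mathrm{odd}}_\b)=0$. By constancy on $(0,\b_0)$ and on $(-\b_0,0)$, together with $n(L^{+,\mathrm{odd}}_0)=0$, we conclude $n(L^{+,\mathrm{odd}}_\b)=1$ for $\b>0$ and $n(L^{+,\mathrm{odd}}_\b)=0$ for $\b\leq0$.

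Adding the two sectors gives $n(\Lp)=1$ for $\b\leq0$ and $n(\Lp)=2$ for $\b>0$, as claimed. The only genuine computation in this scheme is the sign of $\l'(0)$, equivalently the sign of $(\partial_\b\f_\b)|_{\b=0}$; this is the step that truly uses the explicit formula \eqref{def-tilde-phi}, and I expect it to be the main point of the argument --- everything else is a soft continuation-and-symmetry argument resting on Proposition~\ref{prop-spec-Lp-Lm}.
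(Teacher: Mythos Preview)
Your proof is correct and proceeds by a genuinely different route than the paper's. The paper does not split by parity; it works with the full operator $\Lp$ as an analytic family of type~(B) (the domain $D^\b_\C$ depends on $\b$), builds the eigenvalue branch $\l(\b)$ with $\l(0)=0$ and eigenvector $f(\b)=\f_0'+\b f_1+O(\b^2)$, and computes $\l'(0)$ indirectly: it uses the algebraic identity $p(p-1)\f_0^{p-2}(\f_0')^2=\Lpo\big((m^2-\o^2)\f_0-\f_0^p\big)$ together with the relation $q_0^+(\psi,g_1)=-\psi(0)\f_0(0)$ (obtained by differentiating $q_\b^-(\psi,\f_\b)=0$ at $\b=0$), which reduces $\l'(0)$ to the single pointwise quantity $(m^2-\o^2)\f_0(0)^2-\f_0(0)^{p+1}<0$. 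Your parity decomposition buys you a simpler perturbation problem in the odd sector --- fixed domain, bounded multiplicative perturbation --- so first-order perturbation theory is completely standard, and the sign of $\l'(0)$ is transparent from the manifest nonnegativity of $(\partial_\b\f_\b)|_{\b=0}$ in the explicit formula. The paper's approach, by contrast, does not use the evenness of $\f_\b$ and would survive asymmetric perturbations; it also yields a closed-form value of $\l'(0)$ rather than just its sign. Both arguments ultimately rest on the explicit profile \eqref{def-tilde-phi} at the decisive step, and both close by the same continuation argument (no zero-crossing for $\b\neq 0$, uniform lower bound, fixed positive essential spectrum).
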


\begin{proof}
We recall that $\Lp$ is associated to the form $q_\b^+$ defined in \eqref{def-qp}. 
This form is bounded from below and closed with dense domain $H^1_\C$. 
Moreover, for all $u \in H^1_\C$, the map $\b \mapsto q_\b^+(u,u)$ 
is analytic (the map $\b \mapsto \f_\b$ is pointwise analytic and locally bounded in $L^\infty$), 
so the family of operators $\Lp$ is analytic of type (B) in the sense of Kato 
(see \cite[Sec.~VII.4.2]{Kato}).
Thus, by analytic perturbation of $\Lpo$, there exist $\b_1 \in (0,\b_0)$ and an 
analytic function $\l : (-\b_1,\b_1) \to \R$ such that $\l(0) = 0$ and, for all $\b \in (-\b_1,\b_1)$, 
$\l(\b)$ is a simple eigenvalue of $\Lp$, $\Lp$ has a simple eigenvalue smaller than $\l(\b)$, 
and the rest of its spectrum is positive. Furthermore, there exists an analytic function 
$f : (-\b_1,\b_1) \to L^2_\C$ such that, for $\b \in (-\b_1,\b_1)$, $f(\b)$ belongs to $D_\C^\b$ 
and is an eigenfunction corresponding to the eigenvalue $\l(\b)$.

In particular, there exist $\l_1 \in \R$ and $f_1 \in L^2_\C$ such that 
\begin{equation} \label{Taylor1}
\l(\b) = \b \l_1 + O(\b^2),
\end{equation}
\begin{equation}\label{Taylor2}
f(\b) = \f'_0 + \b f_1 + O(\b^2).
\end{equation}
From \eqref{explicit}, $\f_\b$ is also analytic in $\b\in (-\b_0,\b_0)$ 
as a function in $H^1(\R)$, so there exists $g_1 \in H^1_\C$ such that 
\begin{equation} \label{Taylor3}
\f_\b = \f_0 + \b g_1 + O(\b^2).
\end{equation}

For $\b$ small, the sign of $\l(\b)$ is given by the sign of $\l_1$, which we now compute. 
We have
\begin{equation} \label{Taylor5}
\langle \Lp f(\beta),\ffi'_0 \rangle_{L^2_\C}
=\langle \l(\b) f(\beta),\ffi'_0 \rangle_{L^2_\C}=\lambda_1 \beta \|\ffi'_0\|^2_{L^2_\C}+O(\beta^2).
\end{equation}
On the other hand, since $\Lp$ is selfadjoint and $f(\b),\f_0' \in D_\b$, 
\begin{equation*} 
\langle \Lp f(\beta),\ffi'_0\rangle_{L^2_\C}
= \langle f(\beta),\Lp \ffi'_0 \rangle_{L^2_\C}.
\end{equation*}
Then, by \eqref{Taylor3}, 
\begin{equation*} 
\Lp \ffi'_0
= (\Lp-\Lpo)\f_0'
=-p(\ffi_\beta^{p-1}-\ffi^{p-1}_0)\ffi'_0
=-\beta p(p-1)\ffi^{p-2}_0\ffi'_0g_1+O(\beta^2).
\end{equation*}
With \eqref{Taylor2}, this yields
\begin{equation}\label{Taylor8}
\langle \Lp f(\beta), \ffi'_0\rangle_{L^2_\C}
=-\beta \langle\ffi'_0,p(p-1)\ffi^{p-2}_0\ffi'_0g_1\rangle_{L^2_\C}+O(\beta^2).
\end{equation}
A straightforward calculation using that $\Lm \f_0 = 0$ gives 
\begin{equation}\label{correct_formula}
p(p-1)\ffi_0^{p-2}(\ffi'_0)^2=\Lpo \big( (m^2-\omega^2)\ffi_0-\ffi_0^p \big).
\end{equation}
Now consider an arbitrary $\p \in H^1(\R,\R)$. 
Differentiating the identity $q_\b^-(\p,\f_\b)=0$ with respect to $\beta$ at $\beta=0$ yields
\begin{equation}\label{L_0g_1}
q_0^+( \psi,g_1) = - \psi(0)\ffi_0(0).
\end{equation}
In view of \eqref{correct_formula} and \eqref{L_0g_1}, \eqref{Taylor8} then becomes
\begin{equation*} 
\langle \Lp f(\beta),\ffi'_0\rangle_{L^2_\C}
= - \beta q_0^+ \big( (m^2-\omega^2)\ffi_0-\ffi_0^p, g_1 \big)  +O(\beta^2)
= \beta[(m^2-\omega^2)\ffi_0(0)^2-\ffi_0(0)^{p+1}]+O(\beta^2). 
\end{equation*}
Combining this with \eqref{Taylor5}, we obtain
\[
\lambda_1=\frac{(m^2-\omega^2)\ffi_0(0)^2-\ffi_0(0)^{p+1}}{\|\ffi'_0\|_{L^2_\C}^2}.
\]
But from \eqref{explicit} we have 
\[
\ffi_0(0)^{p-1}=\frac{p+1}{2}(m^2-\omega^2)>(m^2-\omega^2),
\]
hence $\lambda_1 < 0$. It follows that there exists $\b_2 \in (0,\b_1)$
such that $\Lp$ has 
exactly one negative eigenvalue for all $\b \in [-\b_2,0)$
and exactly two negative eigenvalues for all $\b \in (0,\b_2]$.

Finally, there exists $\k \in \R$ such that 
$\Lp \geq \k$ for all $\b \in (-\b_0,\b_0)$, so the negative eigenvalues of $\Lp$ are in $[\k,0)$. 
Moreover, we know from Proposition \ref{prop-spec-Lp-Lm} that 0 is not 
an eigenvalue of $\Lp$ if $\b \neq 0$. We define the contour $\Gamma$ as the boundary 
of the set $\O = (\k-1,0) + i (-1,1)$. 
Since $\G$ is in the resolvent set of $\Lp$ for $\b \neq 0$, we know from the analytic perturbation theory 
(see \cite[VII-Section~1.3]{Kato}) that the number of eigenvalues of $\Lp$ in $\O$ 
does not depend on $\b \in (-\b_0,0)$ or $\b \in (0,\b_0)$. 
Since $L^+_{\o,\b_2}$ has two negative eigenvalues and $L_{\o,-\b_2}^+$ has exactly one, 
we have $n(\Lp) = 1$ for all $\b \in (-\b_0,0)$ and $n(\Lp) = 2$ for all $\b \in (0,\b_0)$.
\end{proof}

Combining Propositions \ref{prop-RLpp-Lcb}, \ref{prop-Lcb-Lp-Lm}, 
\ref{prop-spec-Lp-Lm} and \ref{prop-n-Lp} with $\b = \g - \a \o$, we finally obtain the following result.

\begin{proposition} \label{two_negative}
Suppose $m^2-\omega^2>(\gamma-\alpha\omega)^2/4$. Then
\[
n_\om=
\begin{cases}
1 & \text{if} \quad \gamma-\alpha\omega \leq 0, \\
2 & \text{if} \quad \gamma-\alpha\omega > 0.
\end{cases}
\]
\end{proposition}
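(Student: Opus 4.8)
The plan is to assemble Proposition~\ref{two_negative} purely by chaining the reductions established in the preceding four propositions, with the correct identification of the parameters. First I would note that the hypothesis $m^2-\omega^2>(\gamma-\alpha\omega)^2/4$ is precisely the admissibility condition \eqref{omega_range}, and that setting $\beta=\gamma-\alpha\omega$ puts us in the regime $\beta\in(-\beta_0,\beta_0)$ with $\beta_0=2\sqrt{m^2-\omega^2}$, which is the standing hypothesis of Propositions~\ref{prop-spec-Lp-Lm} and \ref{prop-n-Lp}. With this substitution, the operator $\Lpp=L_\omega''(\Phi_\omega)$ appearing in \eqref{expr-Lb} is exactly the one analysed in the subsection, and $n_\omega=n(\RLpp)$ by definition.

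The second step is the chain of equalities for the counting function. From Proposition~\ref{prop-spec-Lp-Lm}(i), $\sigma_\ess(\Lp)=\sigma_\ess(\Lm)=[m^2-\omega^2,+\infty)$, which is bounded away from $0$ since $m^2-\omega^2>\beta^2/4\geq 0$; by Proposition~\ref{prop-Lcb-Lp-Lm}(iii) and Remark~\ref{biglambda} this gives $\inf\sigma_\ess(\Lcb)>0$, and then Proposition~\ref{prop-RLpp-Lcb} (relations \eqref{eq-Sp-ess-pos} and \eqref{eq-eg-n}) yields $n(\RLpp)=n(\Lcb)$. Next, \eqref{eq-nL} in Proposition~\ref{prop-Lcb-Lp-Lm}(ii) gives $n(\RLpp)=n(\Lp)+n(\Lm)$. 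Finally, Proposition~\ref{prop-spec-Lp-Lm}(ii) states that the first eigenvalue of $\Lm$ is $0$ and the rest of its spectrum is positive, so $n(\Lm)=0$; hence $n_\omega=n(\Lp)$.

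The third and last step is to invoke Proposition~\ref{prop-n-Lp}, which asserts precisely that $n(\Lp)=1$ if $\beta\leq 0$ and $n(\Lp)=2$ if $\beta>0$. Reverting to $\beta=\gamma-\alpha\omega$ gives the stated dichotomy, and the proof is complete. In short, there is no new computation to perform here: the proposition is a bookkeeping corollary.

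The only place that requires any care — and hence the point I would expect a referee to scrutinise — is verifying that $\inf\sigma_\ess>0$ is genuinely available so that the equivalences \eqref{eq-Sp-ess-pos}--\eqref{eq-eg-n} and the clean counting identity \eqref{eq-nL} (which uses that $\Lambda$ is a bijection from $(-\infty,0)$ to itself) may be applied. This is immediate from $m^2-\omega^2>0$ together with Remark~\ref{biglambda}, but it is worth stating explicitly in the write-up since all three reduction propositions carry that positivity as a hypothesis. No genuine obstacle remains beyond this routine check.
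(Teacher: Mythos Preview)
Your proposal is correct and follows exactly the paper's approach: the paper presents Proposition~\ref{two_negative} with no separate proof, merely stating that it follows by ``Combining Propositions \ref{prop-RLpp-Lcb}, \ref{prop-Lcb-Lp-Lm}, \ref{prop-spec-Lp-Lm} and \ref{prop-n-Lp} with $\beta = \gamma - \alpha\omega$''. Your write-up supplies the explicit chain of reductions and the verification that $\inf\sigma_\ess>0$ holds, which is precisely the bookkeeping the paper leaves implicit.
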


\begin{remark} \label{rad}
\rm
For $\gamma-\alpha\omega>0$, the operator $\RLpp$ 
restricted to $\HH_{\mathrm{rad}}$ has only one negative eigenvalue (see \cite[Lemma~21]{LeCoz}). 
Hence, $\frac{\diff}{\diff\o}Q(\Phi_\o)<0$ implies orbital instability in $\HH_{\mathrm{rad}}$,
and so orbital instability in $\HH$.
\end{remark}


\subsection{Slope condition}\label{slope.sec}

We shall now turn our attention to the slope condition in order to classify various stability/instability regimes.
We still consider $\alpha, \gamma$ and $\omega$ satisfying \eqref{omega_range}, and we now restore 
the dependence on $\omega$ in the notation --- which was dropped in the previous subsection, where
the parameter $\beta=\gamma-\alpha\omega$ played the key role. 

From \eqref{explicit} and \eqref{charge}, we get
\begin{align} \label{general}
Q(\Phi_\omega)
&=-\omega\|\ffi_\omega\|^2_{L^2}-\frac{\alpha}{2}|\ffi_\omega(0)|^2 \nonumber \\
&=- C(\omega)\frac{4\omega}{(p-1)\sqrt{m^2-\omega^2}}
\int^\infty_{\tau(\omega)} \text{sech}^{\frac{4}{p-1}}(y)\diff y
-\frac{\alpha}{2}C(\omega)\left(1-\frac{(\gamma-\alpha\omega)^2}{4(m^2-\omega^2)}\right)^{\frac{2}{p-1}},
\end{align}
where
\[
C(\omega)=\left(\frac{(p+1)(m^2-\omega^2)}{2}\right)^{\frac{2}{p-1}} \quad \text{and} 
\quad \tau(\omega)=\text{tanh}^{-1}\left(\frac{-(\gamma-\alpha\omega)}{2\sqrt{m^2-\omega^2}}\right).
\]
We first investigate the stability of standing waves when $p=3$, in which case \eqref{general} reduces to
\begin{align*}
Q(\Phi_\omega) 
&=2(m^2-\omega^2)
\left[\frac{-2\omega}{\sqrt{m^2-\omega^2}}
\left(1+\frac{\gamma-\alpha\omega}{2\sqrt{m^2-\omega^2}}\right)
-\frac{\alpha}{2}\left(1-\frac{(\gamma-\alpha\omega)^2}{4(m^2-\omega^2)}\right)\right] \\
&=-4 \omega\sqrt{m^2-\omega^2}-2\omega(\gamma-\alpha\omega)-\alpha(m^2-\omega^2)
+\frac{\alpha}{4}(\gamma-\alpha\omega)^2.
\end{align*}
We shall inspect the derivative of $Q(\Phi_\omega)$ with respect to $\omega$, which is given by
\begin{equation}\label{slope3}
\frac {\diff}{\diff \o} Q(\Phi_\omega)
=\frac{4\omega^2}{\sqrt{m^2-\omega^2}}-4\sqrt{m^2-\omega^2}
+\left(\frac{\alpha^3}{2}+6\alpha\right)\omega-\gamma\left(2+\frac{\alpha^2}{2}\right).
\end{equation}

In the following theorem we address the case when either $\a=0$ or $\g=0$. 
Let us first remark that in these cases there exists an $H^1$ solution of \eqref{stat}. 
Indeed, if
\[
\a=0, \quad |\g|<2m \quad \text{and} \quad \o \in (- \o_\g, \o_\g), 
\quad \text{with} \quad \o_\g = \sqrt{m^2-\frac{\g^2}{4}},
\]
or if
\[
\g=0 \quad \text{and} \quad \o \in (-\o_\a,\o_\a), \quad \text{with} \quad \o_\a = \frac{2m}{\sqrt{4+\a^2}},
\]
then the admissibility relation \eqref{omega_range} is satisfied.

\begin{theorem}\label{stability3}
Let $p=3$ and $m>0$. 
\begin{enumerate}[\rm (i)]
\item Suppose that $\alpha=0$, $|\g|< 2m$ and $\abs \o \leq \o_\g$. Let  
\[ 
\tilde \o_\g = \sqrt{ \frac {16m^2 - \g^2 + \g \sqrt{\g^2 + 32m^2}}{32}} 
= m \sqrt{\frac 12 + \frac {\g}{\sqrt{\g^2 + 32m^2} + \g}}.
\]
\begin{itemize}
\item For $\gamma<0$, $e^{i\omega t}\Phi_\omega$ is orbitally stable 
if $\abs{\o}  > \tilde \o_\g$ and orbitally unstable if $\abs \omega < \tilde \o_\g$.
\item For $\gamma > 0$, $e^{i\omega t}\Phi_\omega$ is linearly unstable if $\abs{\o} > \tilde \o_\g$ 
and orbitally unstable on $\HH_{\mathrm{rad}}$ if $\abs \omega < \tilde \o_\g$.
\end{itemize}
\item Suppose that $\gamma=0$ and $|\alpha|< 2\sqrt{\sqrt{5}-2}$. We set 
\[ 
\o_\a^\pm = \frac{m}{\sqrt{2}}\sqrt{1\mp\frac{\abs{\k}}{\sqrt{4+\k^2}}},
\quad
\text{where}
\quad 
\k = \frac 1 4 \left(\frac{\alpha^3}{2}+6\alpha\right).
\]

\medskip
\noindent
Suppose $\a < 0$.
\begin{itemize}
\item If $\o \in (-\o_\a,-\o_\a^-)$ then $e^{i\omega t}\Phi_\omega$ is orbitally stable.
\item If $\o \in (-\o_\a^-,0)$ then $e^{i\omega t}\Phi_\omega$ is orbitally unstable.
\item If $\o \in (0,\o_\a^+)$ then $e^{i\omega t}\Phi_\omega$ is orbitally unstable on 
$\HH_{\mathrm{rad}}$.
\item If $\o \in (\o_\a^+ , \o_\a)$ then $e^{i\omega t}\Phi_\omega$ is linearly unstable.
\end{itemize}

\medskip
\noindent
Suppose $\a > 0$.
\begin{itemize}
\item If $\o \in (-\o_\a,-\o_\a^+)$ then $e^{i\omega t}\Phi_\omega$ is linearly unstable.
\item If $\o \in (-\o_\a^+,0)$ then $e^{i\omega t}\Phi_\omega$ is orbitally unstable on 
$\HH_\mathrm{rad}$.
\item If $\o \in (0,\o_\a^-)$ then $e^{i\omega t}\Phi_\omega$ is orbitally unstable. 
\item If $\o \in (\o_\a^- , \o_\a)$ then $e^{i\omega t}\Phi_\omega$ is orbitally stable.
\end{itemize}
\end{enumerate}
\end{theorem}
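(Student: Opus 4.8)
The plan is to combine the spectral count $n_\omega$ provided by Proposition~\ref{two_negative} with a careful sign analysis of $\frac{\diff}{\diff\o} Q(\Phi_\omega)$, and then to read off each conclusion from the table in the introduction (via Proposition~\ref{gss1}, Proposition~\ref{gss2}/its Corollary, and Remark~\ref{rad}). Since $p=3$, the charge is the explicit quartic-type expression computed just above the statement, and its $\omega$-derivative is \eqref{slope3}. So the whole proof reduces to: (a) determine the sign of $\gamma-\alpha\omega$ on each relevant $\omega$-interval (this fixes $n_\omega\in\{1,2\}$), and (b) determine the sign of the right-hand side of \eqref{slope3} on each such interval.

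For part (i), $\alpha=0$ forces $\gamma-\alpha\omega=\gamma$, so $n_\omega$ is constant: $n_\omega=1$ if $\gamma<0$ and $n_\omega=2$ if $\gamma>0$, uniformly in admissible $\omega$. With $\alpha=0$ the slope \eqref{slope3} becomes $\frac{\diff}{\diff\o}Q(\Phi_\omega)=\frac{4\omega^2}{\sqrt{m^2-\omega^2}}-4\sqrt{m^2-\omega^2}-2\gamma$; multiplying by $\sqrt{m^2-\omega^2}>0$, its sign is that of $8\omega^2-4m^2-2\gamma\sqrt{m^2-\omega^2}$. I would solve the threshold equation $8\omega^2-4m^2=2\gamma\sqrt{m^2-\omega^2}$: isolating the radical, squaring, and setting $s=\omega^2$ gives a quadratic in $s$ whose relevant root is exactly $\tilde\omega_\gamma^2$ as displayed (one checks the sign constraints to discard the spurious root). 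A monotonicity/endpoint check then shows $\frac{\diff}{\diff\o}Q(\Phi_\omega)>0$ for $|\omega|>\tilde\omega_\gamma$ and $<0$ for $|\omega|<\tilde\omega_\gamma$ (note $\frac{\diff}{\diff\o}Q\to+\infty$ as $|\omega|\to\omega_\gamma$, and is negative near $\omega=0$, since there $8\omega^2-4m^2<0$ while the competing term has sign $-2\gamma\sqrt{m^2}$; for $\gamma<0$ one must verify the unique crossing, and for $\gamma>0$ likewise). Feeding $(n_\omega,\operatorname{sign}\frac{\diff}{\diff\o}Q)$ into the table yields the four bullet points: for $\gamma<0$, $n_\omega=1$ gives orbital stability resp.\ instability according to the slope sign; for $\gamma>0$, $n_\omega=2$ with $n_{\omega,\mathrm{rad}}=1$ (Remark~\ref{rad}) gives linear instability when the slope is positive and orbital instability in $\HH_{\mathrm{rad}}$ when it is negative.

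For part (ii), $\gamma=0$ gives $\gamma-\alpha\omega=-\alpha\omega$, so now $n_\omega$ depends on the sign of $\omega$: if $\alpha<0$ then $n_\omega=1$ for $\omega<0$ and $n_\omega=2$ for $\omega>0$, while if $\alpha>0$ the roles of positive and negative $\omega$ are exchanged; at $\omega=0$, $\beta=0$ and $n_0=1$. The slope \eqref{slope3} with $\gamma=0$ reads $\frac{\diff}{\diff\o}Q(\Phi_\omega)=\frac{4\omega^2}{\sqrt{m^2-\omega^2}}-4\sqrt{m^2-\omega^2}+4\kappa\omega$ with $\kappa=\frac14(\frac{\alpha^3}{2}+6\alpha)$; multiplying by $\sqrt{m^2-\omega^2}$, the sign is that of $8\omega^2-4m^2+4\kappa\omega\sqrt{m^2-\omega^2}$. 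I would again isolate the radical, square, and substitute $s=\omega^2$ to get a quadratic $16s^2-(16m^2+16\kappa^2)s+\cdots$; its two roots are $m^2\big(\tfrac12\mp\tfrac{|\kappa|}{\sqrt{4+\kappa^2}}\big)$, i.e.\ $(\omega_\alpha^{\pm})^2$ as defined. Tracking which root corresponds to a sign change of the original (unsquared) expression — this depends on $\operatorname{sign}(\omega)$ and on $\operatorname{sign}(\kappa)=\operatorname{sign}(\alpha)$ — identifies, for each of the four sub-intervals of $(-\omega_\alpha,\omega_\alpha)$, both $n_\omega$ and the slope sign, and the table produces the eight bullets. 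The condition $|\alpha|<2\sqrt{\sqrt5-2}$ is exactly what guarantees $(\omega_\alpha^{\pm})^2<\omega_\alpha^2=\tfrac{4m^2}{4+\alpha^2}$, i.e.\ that both thresholds lie inside the admissible range (one reduces this to a polynomial inequality in $\alpha^2$); I would verify this at the outset.

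The main obstacle is the bookkeeping in part (ii): correctly matching each squared root $(\omega_\alpha^{\pm})^2$ to an actual sign change of $8\omega^2-4m^2+4\kappa\omega\sqrt{m^2-\omega^2}$ rather than a spurious one introduced by squaring, simultaneously on the four sub-intervals and for both signs of $\alpha$, and then pairing this with the $\omega$-dependent value of $n_\omega$. A clean way to organize this is to note that on $\omega>0$ the sign of the slope is governed by the function $h(\omega)=8\omega^2-4m^2+4\kappa\omega\sqrt{m^2-\omega^2}$, which satisfies $h(0)=-4m^2<0$ and $h(\omega)\to+\infty$ as $\omega\uparrow\omega_\alpha$ (since there $8\omega^2-4m^2\to 8\omega_\alpha^2-4m^2>0$ once $\alpha^2<4$, and the radical term stays bounded), so $h$ changes sign an odd number of times on $(0,\omega_\alpha)$; the squaring computation shows there is exactly one such crossing, located at $\omega_\alpha^+$ if $\kappa<0$ and at $\omega_\alpha^-$ if $\kappa>0$ — this is the only delicate point, and the analogous analysis on $\omega<0$ (where $h(\omega)\to+\infty$ as $\omega\downarrow-\omega_\alpha$ as well) gives the mirror-image crossing. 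Everything else is assembling the table entries, which I would present as a short case-by-case list rather than a computation.
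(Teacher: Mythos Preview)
Your approach is essentially the same as the paper's: determine $n_\omega$ from Proposition~\ref{two_negative}, analyze the sign of \eqref{slope3} by clearing the square root and squaring, and then read off the conclusions from Propositions~\ref{gss1}, \ref{gss2} and Remark~\ref{rad}. The paper's proof is equally brief --- it records the quartic in $\omega$, identifies the admissible roots via the sign constraint, and asserts the resulting sign of $\frac{\diff}{\diff\omega}Q$ without writing out the endpoint checks.

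One correctable slip in your part~(ii) argument: the function $h(\omega)=8\omega^2-4m^2+4\kappa\omega\sqrt{m^2-\omega^2}$ does \emph{not} tend to $+\infty$ as $\omega\uparrow\omega_\alpha$, since $\omega_\alpha=\frac{2m}{\sqrt{4+\alpha^2}}<m$ and $h$ is continuous on $[-m,m]$. What you actually need for your odd-number-of-crossings argument is that $h(\pm\omega_\alpha)>0$; noting $8\omega_\alpha^2-4m^2>0$ for $\alpha^2<4$ is not enough by itself, because the radical term can be negative. A direct computation gives $h(\omega_\alpha)=\frac{4m^2}{4+\alpha^2}\bigl(4-\alpha^2+2\kappa|\alpha|\bigr)$ and similarly at $-\omega_\alpha$, and the worst case vanishes precisely at $|\alpha|=2\sqrt{\sqrt5-2}$, which is where the hypothesis enters. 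You should also double-check the assignment of $\omega_\alpha^{+}$ versus $\omega_\alpha^{-}$ to each half-line: the sign constraint $\mathrm{sgn}(2\omega^2-m^2)=\mathrm{sgn}(-\kappa\omega)$ picks out the larger root $\omega_\alpha^{-}$ on the side where $-\kappa\omega>0$ and the smaller root $\omega_\alpha^{+}$ on the other, so the labeling flips with the sign of $\kappa$.
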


Let us remark that orbitally instability on $\HH_{\mathrm{rad}}$ implies orbitally instability on $\HH$.

%
%

\begin{proof}
{\rm (i)} Since $\alpha=0$, we have $\frac {\diff}{\diff \o} Q(\Phi_\o) = 0$ if and only if 
\[
4 \o^2 - 2m^2 = \g \sqrt{m^2 -\o^2},
\]
that is,
\[
16 \o^4 + (\g - 16 m^2) \o^2 + (4m^4 - \g^2 m^2) = 0 \quad \text{and} \quad \mathrm{sgn}(2\o^2 - m^2) 
= \mathrm{sgn}(\g).
\]
The only possibility is $\o^2 = \tilde \o_\g^2$. Since $\frac {\diff}{\diff \o} Q(\Phi_\o)$ is negative when $\o = 0$ 
and goes to $+\infty$ when $\o$ goes to $\pm \m$, we deduce that 
$\frac {\diff}{\diff \o} Q(\Phi_\o) < 0$ if and only if $\abs \o \leq \tilde \o_\g$.
Furthermore, Proposition~\ref{two_negative} with $\alpha=0$ yields $n_\o = 1$ if $\gamma<0$
and $n_\o = 2$ if $\gamma>0$.
Hence, the conclusions in (i) follow from Proposition~\ref{gss1}, Proposition~\ref{gss2} and Remark~\ref{rad}. 

{\rm (ii)} We now consider the case $\g = 0$. 
We have that $\frac {\diff}{\diff \o} Q(\Phi_\o) = 0$ if and only if 
\[
\o^4 - m^2 \o^2 + \frac {m^4}{4+\k^2} = 0 \quad \text{and} \quad \mathrm{sgn}(2\o^2 - m^2) 
= \mathrm{sgn}(-\k \o).
\]
The solutions are $-\o_\a^-$ and $\o_\a^+$. Then, for $\abs \o < \o_\a$, 
we have $\frac {\diff}{\diff \o} Q(\Phi_\o) < 0$ if and only if $\o \in (-\o_\a^-,\o_\a^+)$. Furthermore, 
$n_\o = 1$ if $\a\o \geq 0$ and $n_\o = 2$ if $\a\o < 0$. 
Hence, the conclusions again follow from Proposition~\ref{gss1}, 
Proposition~\ref{gss2} and Remark~\ref{rad}. 
\end{proof}

\begin{remark}
\rm
Notice that $\tilde \o_\g > \o_\g$ when $\g >  2m/\sqrt 3$. 
In this case, $e^{i\o t} \Phi_\o$ is orbitally unstable for all $\o \in (-\o_\g,\o_\g)$.

Similarly, if $\abs\a \geq 2\sqrt{\sqrt{5}-2}$ then $\o_\a^+ \geq \o_\a$, 
so the set of $\o$ for which we have linear instability is empty.
\end{remark}


We next give some results with non-zero coupling constants, 
$\gamma\neq0$ and $\alpha\neq0$. We first observe that the right-hand side 
of \eqref{slope3} vanishes for
\[
\gamma=\tilde{\gamma}(\alpha,\omega):=
\frac{2}{4+\alpha^2}\left[\frac{4\omega^2}{\sqrt{m^2-\omega^2}}-4\sqrt{m^2-\omega^2}
+\left(\frac{\alpha^3}{2}+6\alpha\right)\omega\right].
\]
It follows that 
\begin{align*}
\sgn\frac {\diff}{\diff \o} Q(\Phi_\omega)
&=-\sgn(\gamma-\tilde{\gamma}).
\end{align*}
The following theorem is then proved
using Proposition~\ref{gss1}, Proposition~\ref{gss2} and Remark~\ref{rad}, similarly to the proof
of Theorem~\ref{stability3}.

\begin{theorem}
Let $p=3$ and consider $\omega\in (-m,m)$, $\a\in\R$ and $\g\in\R$ satisfying \eqref{omega_range}.
\begin{itemize}
\item[(i)] Suppose $\gamma-\alpha\omega<0$. 
Then $e^{i\omega t}\Phi_\omega$ is orbitally stable if $\gamma<\tilde{\gamma}$
and orbitally unstable if $\gamma>\tilde{\gamma}$.
\item[(ii)] Suppose $\gamma-\alpha\omega>0$. 
Then $e^{i\omega t}\Phi_\omega$ is linearly unstable if $\gamma<\tilde{\gamma}$
and orbitally unstable in $\HH_{\mathrm{rad}}$ if $\gamma>\tilde{\gamma}$.
\end{itemize}
\end{theorem}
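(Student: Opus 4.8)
The plan is to follow the proof of Theorem~\ref{stability3} almost verbatim: the slope condition is read off from the explicit derivative~\eqref{slope3}, the sign bookkeeping is carried out through the auxiliary function $\tilde\gamma(\a,\om)$, and the spectral input has already been assembled in Section~\ref{PointSpectrum}. First I would record the sign of the slope. Since $2+\frac{\a^2}{2}=\frac{4+\a^2}{2}>0$, the definition of $\tilde\gamma=\tilde\gamma(\a,\om)$ together with~\eqref{slope3} gives
\[
\frac{\dif}{\dif\om}Q(\Phi_\om)=\Big(2+\tfrac{\a^2}{2}\Big)\big(\tilde\gamma-\gamma\big),
\]
so $\frac{\dif}{\dif\om}Q(\Phi_\om)>0$ precisely when $\gamma<\tilde\gamma$ and $\frac{\dif}{\dif\om}Q(\Phi_\om)<0$ precisely when $\gamma>\tilde\gamma$.

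Next I would check that, since $\beta:=\gamma-\a\om\neq0$ throughout the statement, $\Phi_\om$ satisfies the spectral conditions needed to run the energy-momentum method. For (S2): by Proposition~\ref{prop-spec-Lp-Lm}(ii)--(iii), $\ker(\Lp)=\{0\}$ and $\ker(\Lm)=\mathrm{span}(\ffi_\b)$, so~\eqref{eq-dim-ker} at $\l=0$ (where $\L(0)=0$), combined with~\eqref{eq-ker}, shows that $\ker(\RLpp)=\ker(\Lcb)$ is one-dimensional, and~\eqref{basis-ker} identifies it with the span of $(i\ffi_\om,-\om\ffi_\om)=i\Phi_\om$. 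For (S3): by Remark~\ref{biglambda}, $\inf\s_\ess(\Lcb)=\sigma_1>0$, so by~\eqref{eq-Sp-ess-pos}, $\inf\s_\ess(\RLpp)>0$; since $\RLpp$ is a bounded self-adjoint operator, its positive eigenvalues can only accumulate at $\inf\s_\ess(\RLpp)$, hence there are finitely many of them below that threshold, and $\s(\RLpp)$ with its non-positive eigenvalues removed is bounded away from $0$. Finally, Proposition~\ref{two_negative} gives $n_\om=1$ when $\beta<0$ and $n_\om=2$ when $\beta>0$; since $n(A)$ is the total multiplicity of the negative eigenvalues of $A$, the value $n_\om=1$ is precisely condition (S1) and $n_\om=2$ is precisely condition (S$1'$).

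It then remains to invoke the abstract criteria. In case~(i) we have $\beta<0$, so $\Phi_\om$ satisfies (S1)--(S3) with $n_\om=1$ and Proposition~\ref{gss1} applies: $e^{i\om t}\Phi_\om$ is orbitally stable when $\frac{\dif}{\dif\om}Q(\Phi_\om)>0$, i.e.\ $\gamma<\tilde\gamma$, and orbitally unstable when $\frac{\dif}{\dif\om}Q(\Phi_\om)<0$, i.e.\ $\gamma>\tilde\gamma$. In case~(ii) we have $\beta>0$, so $\Phi_\om$ satisfies (S$1'$)--(S3) with $n_\om=2$; if $\gamma<\tilde\gamma$ then $\frac{\dif}{\dif\om}Q(\Phi_\om)>0$, hence $p(d''(\om))=1$ and $n_\om-p(d''(\om))=1$ is odd, so Proposition~\ref{gss2} (equivalently the corollary following it) yields linear instability, while if $\gamma>\tilde\gamma$ then $\frac{\dif}{\dif\om}Q(\Phi_\om)<0$ and Remark~\ref{rad} yields orbital instability in $\HH_{\mathrm{rad}}$, hence in $\HH$.

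I do not expect a genuine obstacle here: the statement is the bookkeeping step that funnels the spectral count of Proposition~\ref{two_negative}, the conditions (S1)/(S$1'$)/(S2)/(S3), and the explicit slope~\eqref{slope3} into the conclusions of Propositions~\ref{gss1} and~\ref{gss2} and Remark~\ref{rad}. The two points requiring a little attention are that the hypothesis $\gamma-\a\om<0$ (resp.\ $>0$) is exactly what produces $n_\om=1$ (resp.\ $n_\om=2$), thereby selecting which abstract mechanism applies, and that (S2) genuinely uses $\gamma-\a\om\neq0$, since at $\beta=0$ the kernel of $\RLpp$ would acquire the extra direction $\partial_x\ffi_0$.
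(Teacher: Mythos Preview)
Your proposal is correct and follows essentially the same approach as the paper: the paper observes that $\sgn\frac{\dif}{\dif\om}Q(\Phi_\om)=-\sgn(\gamma-\tilde\gamma)$ directly from~\eqref{slope3} and then says the theorem is proved ``using Proposition~\ref{gss1}, Proposition~\ref{gss2} and Remark~\ref{rad}, similarly to the proof of Theorem~\ref{stability3}.'' You simply spell out the verification of (S1)/(S$1'$)/(S2)/(S3) from the material in Section~\ref{PointSpectrum} more explicitly than the paper does, which is fine.
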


\begin{remark}
\rm
For any fixed $\alpha\in\R$, there always exist values of the parameters $\o, \g$ 
satisfying the above conditions for stability/instability.
For instance, conditions \eqref{omega_range}, $\g<\a\o $ and $\g < \tilde{\g}$ are all satisfied provided
\[
0<\tilde{\g}-\a\o+2\sqrt{m^2-\o^2}=
\frac{8}{4+\alpha^2}\frac{2\omega^2-m^2}{\sqrt{m^2-\omega^2}}+2\sqrt{m^2-\omega^2}
+\frac{8\alpha\omega}{4+\alpha^2}.
\]
Clearly, this condition is satisfied for $|\o| \approx m$. 
The other cases follow by similar arguments.
\end{remark}


We now consider more general values of the power $1<p<5$. To keep the exposition
simple enough, we focus on the cases where the coupling constants $\a,\g$ have the same
sign. Of course, mixed cases could also be considered.

\begin{lemma} \label{slope_general}
Let $1<p<5$.
\begin{enumerate}[\rm (i)]
\item Suppose $\alpha, \gamma>0$.
We have
\[
\frac {\diff}{\diff \o} Q(\Phi_\omega)<0 \quad \text{for} \ \omega\in\left(-\frac{m}{2}\sqrt{p-1},0\right)
\]
and
\[
\frac {\diff}{\diff \o} Q(\Phi_\omega)>0 \quad \text{for} \ 
\omega\in\left(\frac{\alpha\gamma}{4+\a^2},\frac{\alpha m^2}{\gamma}\right)
\cap \left(\frac{m}{2}\sqrt{p-1},m\right),
\]
whenever these intervals are not empty.
\item Suppose $\alpha, \gamma<0$. 
We have
\[
\frac {\diff}{\diff \o} Q(\Phi_\omega)<0 \quad \text{for} \ 
\omega\in\left(0,\frac{m}{2}\sqrt{p-1}\right)
\cap\left(\frac{\alpha\gamma}{4+\a^2},\frac{\alpha m^2}{\gamma}\right)
\]
and
\[
\frac {\diff}{\diff \o} Q(\Phi_\omega)>0 \quad \text{for} \ 
\o \in\left ( \frac{\a m^2}{\g}, \frac{\a\g}{4+\a^2} \right) \cap \left( \frac{m}{2}\sqrt{p-1}, m \right),
\]
whenever these intervals are not empty.
\end{enumerate}
\end{lemma}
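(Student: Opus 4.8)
The plan is to compute $\frac{\diff}{\diff\omega}Q(\Phi_\omega)$ directly from \eqref{general} and track the sign of each term. Introduce $\nu=\nu(\omega)=\sqrt{m^2-\omega^2}$ and $b=b(\omega)=\frac{\gamma-\alpha\omega}{2\nu}$, so that admissibility \eqref{omega_range} reads exactly $|b|<1$. Rescaling \eqref{explicit} gives $\ffi_\omega(x)=\nu^{2/(p-1)}\Phi_b(\nu x)$ for the fixed profile $\Phi_b(z)=\big[\tfrac{p+1}{2}\operatorname{sech}^2\!\big(\tfrac{p-1}{2}|z|+\tanh^{-1}(-b)\big)\big]^{1/(p-1)}$, whence $\|\ffi_\omega\|_{L^2}^2=\nu^{\kappa}g(b)$ and $\ffi_\omega(0)^2=\nu^{\kappa+1}h(b)$, with $\kappa=\tfrac{5-p}{p-1}$, $h(b)=\big(\tfrac{p+1}{2}(1-b^2)\big)^{2/(p-1)}$ and $g(b)=\|\Phi_b\|_{L^2}^2$; differentiating the integral representation of $g$ shows $g'(b)>0$ for $|b|<1$. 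I would also record the two elementary identities $\nu'(\omega)=-\omega/\nu$ and, crucially, $b'(\omega)=\frac{\gamma\omega-\alpha m^2}{2\nu^3}$, the latter being the source of the threshold $\alpha m^2/\gamma$ in the statement.

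Differentiating $Q(\Phi_\omega)=-\omega\,\nu^{\kappa}g(b)-\tfrac\alpha2\nu^{\kappa+1}h(b)$ with the help of these identities, I would write $\frac{\diff}{\diff\omega}Q(\Phi_\omega)=A_1+A_2+(A_3+A_4)$, where $A_1,A_2$ come from differentiating $-\omega\,\nu^\kappa g(b)$,
\[
A_1=\nu^{\kappa-2}g(b)\Big(\tfrac{4}{p-1}\omega^2-m^2\Big),\qquad A_2=-\omega\,\nu^{\kappa}g'(b)\,b'(\omega),
\]
while $A_3+A_4$ collects the two contributions of $-\tfrac\alpha2\nu^{\kappa+1}h(b)$. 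The crux is the cancellation in $A_3+A_4$: using $h'(b)/h(b)=-\tfrac{4b}{(p-1)(1-b^2)}$ together with the explicit forms of $b$ and $b'(\omega)$, one obtains
\[
A_3+A_4=\frac{2\alpha}{p-1}\,\nu^{\kappa-1}h(b)\cdot\frac{(m^2-\omega^2)\big[(4+\alpha^2)\omega-\alpha\gamma\big]}{4(m^2-\omega^2)-(\gamma-\alpha\omega)^2},
\]
whose denominator is positive by \eqref{omega_range}. Since $\nu>0$ and $g,g',h>0$, the quantities $A_1$, $A_2$, $A_3+A_4$ have respectively the signs of $\tfrac{4}{p-1}\omega^2-m^2$ — i.e. of $|\omega|-\tfrac m2\sqrt{p-1}$, and $\tfrac m2\sqrt{p-1}<m$ precisely because $p<5$ — of $-\omega(\gamma\omega-\alpha m^2)$, and of $\sgn(\alpha)\big(\omega-\tfrac{\alpha\gamma}{4+\alpha^2}\big)$.

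It then remains to observe that each interval in the statement is exactly a set on which all three of these quantities share the desired sign, so that $\frac{\diff}{\diff\omega}Q(\Phi_\omega)$ inherits it. For $\alpha,\gamma>0$: on $\omega\in(-\tfrac m2\sqrt{p-1},0)$ one has $A_1<0$, $A_2<0$ (since $\omega<0$ and $\gamma\omega-\alpha m^2<0$) and $A_3+A_4<0$ (since $\omega<\tfrac{\alpha\gamma}{4+\alpha^2}$); on $\omega\in(\tfrac{\alpha\gamma}{4+\alpha^2},\tfrac{\alpha m^2}{\gamma})\cap(\tfrac m2\sqrt{p-1},m)$ all three are positive (for $A_2$, note $\omega>0$ and $\gamma\omega<\alpha m^2$). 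For $\alpha,\gamma<0$ the reasoning is identical: the position of $\omega$ relative to $\tfrac{\alpha m^2}{\gamma}$ fixes the sign of $b'(\omega)$ and hence of $A_2$ (now $\gamma<0$), while $\sgn(\alpha)=-1$ turns the condition $\omega\gtrless\tfrac{\alpha\gamma}{4+\alpha^2}$ into the appropriate sign of $A_3+A_4$; combined with the sign of $A_1$ dictated by $\tfrac m2\sqrt{p-1}$, the two claims follow. One should add that these signs are asserted only on the admissible portion of each interval, which is precisely the ``whenever these intervals are not empty'' proviso.

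The main obstacle is the identity for $A_3+A_4$: on the relevant intervals neither $A_3$ nor $A_4$ is individually sign-definite, and it is only after the cancellation producing the factor $(4+\alpha^2)\omega-\alpha\gamma$ that the threshold $\tfrac{\alpha\gamma}{4+\alpha^2}$ emerges. Carrying out this computation cleanly — together with the auxiliary fact $g'(b)>0$, which reflects the monotonicity in $b$ of the $L^2$-mass of the rescaled soliton profile — is the substantive part of the argument; once it is in hand, the four cases reduce to the sign bookkeeping sketched above.
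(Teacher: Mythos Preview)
Your proof is correct and follows essentially the same strategy as the paper: decompose $Q(\Phi_\omega)$ into the $L^2$-mass term and the point-value term, differentiate, and track the signs of the three resulting pieces against the thresholds $\tfrac{m}{2}\sqrt{p-1}$, $\tfrac{\alpha m^2}{\gamma}$, and $\tfrac{\alpha\gamma}{4+\alpha^2}$. The only difference is your intermediate parametrization by $(\nu,b)$ rather than by $\omega$ directly, which forces the recombination of $A_3+A_4$ to recover the last threshold; the paper avoids this detour by differentiating $C_2(\omega)=-\tfrac{\alpha}{2}\ffi_\omega(0)^2$ in $\omega$ straightaway.
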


\begin{proof}
We only prove (i), as (ii) is proved by similar calculations. 
We rewrite \eqref{general} as
\[
Q(\Phi_\omega)=C_1(\omega)I(\omega)+C_2(\omega),
\]
where
\begin{align*}
C_1(\omega)
&=-\frac{4}{p-1}\left( \frac{p+1}{2}\right)^{\frac{2}{p-1}}
\omega(m^2-\omega^2)^{\frac{2}{p-1}-\frac{1}{2}}, \\
I(\omega)
&=\int^\infty_{\tau(\omega)}\mathrm{sech}^{\frac{4}{p-1}}(y)\diff y, \\
C_2(\omega)
&=-\frac{\alpha}{2}\left(\frac{p+1}{8} \right)^{\frac{2}{p-1}}(4(m^2-\omega^2)
-(\gamma-\alpha\omega)^2)^{\frac{2}{p-1}}.
\end{align*}
We first find 
\[
\frac{\partial C_2}{\partial \omega} = 
-\frac{\alpha}{p-1}\left(\frac{p+1}{8} \right)^{\frac{2}{p-1}}(4(m^2-\omega^2)
-(\gamma-\alpha\omega)^2))^{\frac{2}{p-1}-1}(-8\omega+2\alpha(\gamma-\alpha\omega)).
\]
It follows that $\frac{\partial C_2}{\partial\omega} >0$ if
\begin{equation} \label{range1}
\omega>\frac{\alpha\gamma}{4+\a^2}.
\end{equation}
We next determine the sign of $\frac{\partial}{\partial \o} (C_1I)$ assuming $\omega>0$.
Since $I$ is positive and $C_1$ is negative for $\o>0$, 
we will have 
\[
\frac{\partial }{\partial\omega}(C_1I)
=\frac{\partial C_1}{\partial\omega}I+C_1\frac{\partial I}{\partial\omega}>0,
\]
provided $\frac{\partial C_1}{\partial\omega}>0$ and $\frac{\partial I}{\partial\omega}<0$.
On the one hand, we have
\[
\frac{\partial C_1}{\partial\omega}=
-\frac{4}{p-1}\left(\frac{p+1}{2}\right)^{\frac{2}{p-1}}(m^2-\omega^2)^{\frac{2}{p-1}-\frac{1}{2}}
+\frac{8}{p-1}\left(\frac{p+1}{2}\right)^{\frac{2}{p-1}}
\left( \frac{2}{p-1}-\frac{1}{2}\right)\omega^2(m^2-\omega^2)^{\frac{2}{p-1}-\frac{3}{2}},
\]
which is positive if
\begin{equation} \label{range2}
|\omega|>\frac{m}{2}\sqrt{p-1}.
\end{equation}
On the other,
\[
\frac{\partial I}{\partial\omega}
=-\mathrm{sech}^{\frac{4}{p-1}}(\tau)\frac{\partial}{\partial\omega}\tau
=-\frac{1}{\sqrt {m^2-\omega^2}}\left(1-
\frac{(\gamma-\alpha\omega)^2}{4(m^2-\omega^2)}\right)^{\frac{2}{p-1}}
\frac{2\alpha m^2-2\gamma\omega}{4(m^2-\omega^2)-(\gamma-\alpha\omega)^2},
\] 
which is negative if
\begin{equation} \label{range3}
\omega<\frac{\alpha m^2}{\gamma}.
\end{equation}
Hence, it follows from \eqref{range1}--\eqref{range3} 
that $\frac {\diff}{\diff \o}Q(\Phi_\omega) >0$ if
\[
\omega\in \left(\frac{\alpha\gamma}{4+\a^2},
\frac{\alpha m^2}{\gamma}\right)\cap \left(\frac{m}{2}\sqrt{p-1},m\right).
\]

We now show that $\frac {\diff}{\diff \o} Q(\Phi_\omega)<0$ 
for $\o\in\left(-\frac{m}{2}\sqrt{p-1},0\right)$.\\ 
As $C_1$ and $I$ are both positive for $\o\in(-m,0)$, we will have
\[
\frac{\partial }{\partial\omega}(C_1I)
=\frac{\partial C_1}{\partial\omega}I+C_1\frac{\partial I}{\partial\omega}<0,
\]
provided $\frac{\partial C_1}{\partial\omega}<0$ and 
$\frac{\partial I}{\partial\omega}<0$. 
From the previous calculations, we know that $\frac{\partial C_1(\omega)}{\partial\omega}<0$ if
\begin{equation} \label{range4}
|\omega|<\frac{m}{2}\sqrt{p-1}
\end{equation}
and $\frac{\partial I}{\partial\omega}<0$ if
\begin{equation} \label{range5}
\omega<\frac{\alpha m^2}{\gamma}.
\end{equation}
Finally, $\frac{\partial C_2}{\partial\omega}<0$ if
\begin{equation} \label{range6}
\omega<\frac{\alpha\gamma}{4+\a^2}.
\end{equation}
Since $\alpha>0$ and $\gamma>0$, we conclude from \eqref{range4}--\eqref{range6}  
that $\frac {\diff}{\diff \o} Q(\Phi_\omega)<0$ for all $\omega\in\left(-\frac{m}{2}\sqrt{p-1},0\right)$.
\end{proof}

We finally combine Lemma~\ref{slope_general} with the
spectral conditions in Proposition~\ref{two_negative} to get the following result.

\begin{theorem} \label{stab_general}
Let $1<p<5$ and $m=1$.
\begin{enumerate}[\rm (i)]

\item Let $\alpha, \gamma>0$.
If $\frac{\a\g}{4+\a^2}<1$ and 
$1<\frac{\g^2}{4+\a^2}+\frac{2\g}{\a(4+\a^2)}\sqrt{4+\a^2-\g^2}$, 
then there exists $\o$ satisfying \eqref{omega_range} and
\[
\omega\in \left(\frac{\alpha\gamma}{4+\a^2},\frac{\alpha}{\gamma} \right)
\cap \left(\frac{1}{2}\sqrt{p-1},1\right).
\]
For such $\omega$, the standing wave $e^{i\omega t}\Phi_{\omega}$ is orbitally stable.
\\
If $\a\g<2\sqrt{4+\a^2-\g^2}$, then there exists $\o$ satisfying 
\eqref{omega_range} and
\[
\omega \in \left(-\frac{1}{2}\sqrt{p-1},0 \right).
\]
For such $\omega$, $e^{i\omega t}\Phi_{\omega}$ is orbitally unstable.

\item Let $\alpha, \gamma<0$.
If $1<\frac{\g}{\a}$, then there exists $\o\in\R$ satisfying 
\eqref{omega_range} and
\[
\omega\in \left( \frac{\alpha}{\gamma}, \frac{\alpha\gamma}{4+\a^2}\right)
\cap \left(\frac{1}{2}\sqrt{p-1},1\right).
\]
For such $\omega$, $e^{i\omega t}\Phi_{\omega}$ is orbitally stable.
\\
If $\frac{\g}{\a}<\frac{1}{2}\sqrt{p-1}$, then there exists $\o\in\R$ satisfying 
\eqref{omega_range} and 
\[
\omega\in\left(0,\frac{1}{2}\sqrt{p-1}\right)
\cap\left(\frac{\alpha\gamma}{4+\a^2},\frac{\alpha}{\gamma}\right).
\]
For such $\omega$, $e^{i\omega t}\Phi_{\omega}$ is orbitally unstable.
\end{enumerate}
\end{theorem}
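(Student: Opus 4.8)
The plan is to combine Lemma~\ref{slope_general}, which singles out ranges of $\o$ on which $\tfrac{\diff}{\diff\o}Q(\Phi_\o)$ has a definite sign, with Proposition~\ref{two_negative}, which gives the value of $n_\o$ from the sign of $\gamma-\alpha\o$, and then to invoke the stability/instability dichotomy summarised in the introduction --- Proposition~\ref{gss1}, the table, and Remark~\ref{rad}. This is exactly the mechanism already used in the proof of Theorem~\ref{stability3}; the only work specific to the present statement is the claim ``there exists $\o$ satisfying \eqref{omega_range}'', i.e.\ that the frequency interval furnished by Lemma~\ref{slope_general} genuinely contains admissible frequencies carrying the required value of $n_\o$.

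First I would write the admissible set explicitly. For $m=1$, condition \eqref{omega_range} reads $(4+\alpha^2)\o^2-2\alpha\gamma\,\o+\gamma^2-4<0$, an upward parabola in $\o$ with vertex at $\o=\tfrac{\alpha\gamma}{4+\alpha^2}$ and roots $\o_\pm=\tfrac{\alpha\gamma\pm2\sqrt{4+\alpha^2-\gamma^2}}{4+\alpha^2}$; hence the admissible frequencies form the open interval $(\o_-,\o_+)$, nonempty precisely when $\gamma^2<4+\alpha^2$. Each hypothesis of the theorem then becomes an ordering among the explicit numbers $\o_-$, $\o_+$, $\tfrac{\alpha\gamma}{4+\alpha^2}$, $\tfrac{\alpha}{\gamma}$, $\tfrac12\sqrt{p-1}$, $1$. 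For instance, in (i) the expression $\tfrac{\gamma^2}{4+\alpha^2}+\tfrac{2\gamma}{\alpha(4+\alpha^2)}\sqrt{4+\alpha^2-\gamma^2}$ equals $\tfrac{\gamma}{\alpha}\o_+$, so the second hypothesis is exactly $\o_+>\tfrac{\alpha}{\gamma}$, which together with $\tfrac{\alpha\gamma}{4+\alpha^2}<1$ places the slope-positive interval $\big(\tfrac{\alpha\gamma}{4+\alpha^2},\tfrac{\alpha}{\gamma}\big)\cap\big(\tfrac12\sqrt{p-1},1\big)$ of Lemma~\ref{slope_general}(i) inside $(\o_-,\o_+)$; likewise, evaluating \eqref{omega_range} at $\o=0$ shows that the hypothesis $\alpha\gamma<2\sqrt{4+\alpha^2-\gamma^2}$ of the instability clause is precisely $\o_-<0$, which is what makes $(\o_-,\o_+)$ meet $\big(-\tfrac12\sqrt{p-1},0\big)$. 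Part (ii), with $\alpha,\gamma<0$, is handled by the mirror-image comparisons, the hypothesis $1<\tfrac{\gamma}{\alpha}$ playing the analogous role.

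Once a frequency $\o$ is selected in the pertinent interval, I would read off $n_\o$ from the sign of $\gamma-\alpha\o$ through Proposition~\ref{two_negative} and check that, on the relevant portion of that interval, this sign produces the value of $n_\o$ demanded by the criterion. In the ``stable'' cases $n_\o=1$, and Proposition~\ref{gss1} --- applied with the sign of $\tfrac{\diff}{\diff\o}Q(\Phi_\o)$ supplied by Lemma~\ref{slope_general} --- yields orbital stability. In the ``unstable'' cases one has either $n_\o=1$ with $\tfrac{\diff}{\diff\o}Q(\Phi_\o)<0$, again covered by Proposition~\ref{gss1}, or $n_\o=2$ (with $n_{\o,\mathrm{rad}}=1$ by Remark~\ref{rad}) and $\tfrac{\diff}{\diff\o}Q(\Phi_\o)<0$, in which case the last row of the table gives orbital instability in $\HH_{\mathrm{rad}}$, hence in $\HH$. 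This is the same chain of deductions as in Theorem~\ref{stability3}.

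The main obstacle is bookkeeping rather than new analysis: three requirements on $\o$ must be made compatible at once --- admissibility \eqref{omega_range}, the sign of $\tfrac{\diff}{\diff\o}Q(\Phi_\o)$ from Lemma~\ref{slope_general}, and the sign of $\gamma-\alpha\o$ that fixes $n_\o$ via Proposition~\ref{two_negative} --- and one must show this is possible exactly under the displayed inequalities on $\alpha,\gamma,p$. Care is needed because Lemma~\ref{slope_general} only certifies a sign of the slope on the stated intersection of intervals when it is nonempty, so one must keep track of which of $\tfrac{\alpha\gamma}{4+\alpha^2}$, $\tfrac{\alpha}{\gamma}$, $\tfrac12\sqrt{p-1}$, $1$ is the binding endpoint in each sign regime of $(\alpha,\gamma)$; this is where the case distinctions are most delicate.
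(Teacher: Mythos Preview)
Your proposal is correct and follows essentially the same approach as the paper's proof: combine Lemma~\ref{slope_general} for the sign of the slope, Proposition~\ref{two_negative} for $n_\omega$, and then invoke Proposition~\ref{gss1} and Remark~\ref{rad}, the only substantive work being to check that the stated inequalities on $\alpha,\gamma,p$ guarantee an admissible $\omega$ in the interval supplied by Lemma~\ref{slope_general}. Your explicit description of the admissible set as $(\omega_-,\omega_+)$ and your identification of the hypotheses with ordering relations among the endpoints (in particular, recognising that the second hypothesis in (i) is exactly $\omega_+>\alpha/\gamma$ and that $\alpha\gamma<2\sqrt{4+\alpha^2-\gamma^2}$ is exactly $\omega_-<0$) is in fact more detailed than the paper's own argument, which is quite terse on these points.
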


\begin{proof}
We only prove (i), the proof of (ii) being similar. The hypotheses $m=1$ and 
$\g<\a$ imply that there exists $\o\in\R$ satisfying
\[
\omega\in \left(\frac{\alpha\gamma}{4+\a^2},\frac{\alpha}{\gamma}\right)
\cap \left(\frac{1}{2}\sqrt{p-1},1\right).
\]
In particular, since $\frac{\alpha\gamma}{4+\a^2}<\frac{\g}{\a}$, we also have $\o<\frac{\g}{\a}$.
Furthermore, 
if $1<\frac{\g^2}{4+\a^2}+\frac{2\g}{\a(4+\a^2)}\sqrt{4+\a^2-\g^2}$, then $\o$ 
satisfies the admissibility condition \eqref{omega_range}. 
Orbital stability then follows from Proposition~\ref{two_negative} and Lemma~\ref{slope_general}.

The condition $\a\g<2\sqrt{4+\a^2-\g^2}$ implies that 
there exists a $\omega \in \left(-\frac{1}{2}\sqrt{p-1},0 \right)$ satisfying \eqref{omega_range}. 
Orbital instability follows from Proposition~\ref{two_negative}, Remark~\ref{rad} and Lemma~\ref{slope_general}.
\end{proof}

\begin{remark}
\rm
The condition $1<\frac{\g^2}{4+\a^2}+\frac{2\g}{\a(4+\a^2)}\sqrt{4+\a^2-\g^2}$ is satisfied if 
$0<\a<\g<\sqrt{4+\a^2}$. 
\end{remark}

{\bf Acknowledgements} The authors are indebted to Stefan Le Coz for many fruitful discussions about this paper. This research was supported by CIMI Labex, Toulouse, France, under grant ANR-11-LABX-0040-CIMI, by the French ministries of Europe and foreign affairs (MEAE), of higher education, research and innovation (MESRI) via the partnership Hubert Curien (PHC) Van Gogh 2018 37915YF, and by the Van Gogh travel grant 2018 13077.


\begin{thebibliography}{99}

\bibitem{Adami} R.~Adami, D.~Noja,
Existence of dynamics for a 1D NLS equation perturbed with a generalized point defect,
{\it J. Phys. A} {\bf  42} (2009), 495302.

\bibitem{Adami2} R.~Adami, D.~Noja,
Stability and symmetry-breaking bifurcation for the ground states of a NLS with a $\delta'$ interaction,
{\it Comm. Math. Phys.} {\bf 318} (2013), 247--289. 

\bibitem{Albeverio} S.~Albeverio, F.~Gesztesy, R.~Hoeg-Krohn, H.~Holden,
Solvable models in quantum mechanics, second edition,
{\it Texts and Monographs in Physics}, Springer, 1988.

\bibitem{pava}
J. Angulo Pava, Lucas C.~F. Ferreira, 
On the Schrödinger equation with singular potentials. (English summary) 
{\it Differential Integral Equations} {\bf 27} (2014), 767--800. 

\bibitem{Berezin} F.~A.~Berezin, M. A. Shubin,
The Schr\"odinger equation,
{\it Mathematics and its Applications} {\bf 66}, Kluwer Academic Publishers, 1991.

\bibitem{Bellazzini} J.~Bellazzini, M.~Ghimenti, S.~Le Coz,
Multi-solitary waves for the nonlinear Klein--Gordon equation,
{\it Comm. Partial Differential Equations} {\bf 39} (2014), 1479--1522.

\bibitem{CH98} T.~Cazenave, A.~Haraux,
An introduction to semilinear evolution equations,
{\it Oxford Lecture Series in Mathematics and its Applications} {\bf 13}, 
Oxford University Press, 1998.

\bibitem{datchev}
K.~Datchev, J.~Holmer,
Fast soliton scattering by attractive delta impurities,
{\it Comm. Partial Differential Equations} {\bf 34} (2009), 1074--1113. 

\bibitem{DeGeRo15} S.~De Bi\`evre, F.~Genoud, S.~Rota-Nodari,
Orbital stability: analysis meets geometry, in Nonlinear optical and atomic systems,
{\it Lecture Notes in Math.} {\bf 2146} (2015), 147--273.

\bibitem{engel}
K.~J. Engel, R.~Nagel,
\newblock One-parameter semigroups for linear evolution equations,
\newblock Springer, 2000.

\bibitem{Fukuizumi1} R.~Fukuizumi, L.~Jeanjean,
Stability of standing waves for a nonlinear Schr\"odinger equation with a repulsive Dirac delta potential,
{\it Discrete Contin. Dyn. Syst.} {\bf 21} (2008), 121--136.

\bibitem{Fukuizumi2} R.~Fukuizumi, M.~Ohta, T.~Ozawa,
Nonlinear Schr\"odinger equation with a point defect,
{\it Ann. Inst. H. Poincar\'e Anal. Non Lin\'eaire} {\bf 25} (2008), 837--845.

\bibitem{Genoud} F.~Genoud, B.~A.~Malomed, R.~P.~Weish\"aupl,
Stable NLS solitons in a cubic-quintic medium with a delta-function potential,
{\it Nonlinear Anal.} {\bf 133} (2016), 28--50. 

\bibitem{Goodman} R. H.~Goodman, P. J.~Holmes, M. I.~Weinstein,
Strong NLS soliton-defect interactions,
{\it Phys. D} {\bf 192} (2004), 215--248.

\bibitem{GSS} M.~Grillakis, J.~Shatah, W.~Strauss,
Stability theory of solitary waves in the presence of symmetry I,
{\it J. Funct. Anal.} {\bf 74} (1987), no.~1, 160--197.

\bibitem{Gs2} M.~Grillakis, J.~Shatah, W.~Strauss,
Stability theory of solitary waves in the presence of symmetry II,
{\it J. Funct. Anal.} {\bf 94} (1990), no.~2, 308--348.

\bibitem{black}
I. Ianni, S. Le Coz, J. Royer,
On the Cauchy problem and the black solitons of a singularly perturbed Gross--Pitaevskii equation, 
{\it SIAM J. Math. Anal.} {\bf 49} (2017), no.~2, 1060--1099.

\bibitem{ikeda}
M.~Ikeda, T.~Inui,
Global dynamics below the standing waves for the focusing semilinear Schr\"odinger 
equation with a repulsive Dirac delta potential,
{\it Anal. PDE} {\bf 10} (2017), 481--512.

\bibitem{Jeanjean} L.~Jeanjean, S.~Le Coz,
Instability for standing waves of nonlinear Klein--Gordon equations via mountain-pass arguments,
{\it Trans. Amer. Math. Soc.} {\bf 361} (2009), 5401--5416.

\bibitem{Kato} T.~Kato,
Perturbation theory for linear operators, reprinted version of the second edition,
Classics in mathematics, Springer, 1995.

\bibitem{LeCoz1} S.~Le Coz,
Standing waves in nonlinear Schr\"odinger equations, in
Analytical and numerical aspects of partial differential equations, de Gruyter, 
Berlin (2009), 151--192.

\bibitem{LeCoz} S.~Le Coz, R.~Fukuizumi, G.~Fibich, B.~Ksherim, Y.~Sivan,
Instability of bound states of a nonlinear Schr\"odinger equation with a Dirac potential,
{\it Physica D.} {\bf 237} (2008), 1103--1128.

\bibitem{Liu} Y.~Liu, M.~Ohta, G.~Todorova,
Instabilit\'e forte d'ondes solitaires pour des \'equations de Klein--Gordon non lin\'eaires et des \'equations g\'en\'eralis\'ees de Boussinesq,
{\it Ann. Inst. H. Poincar\'e Anal. Non Lin\'eaire } {\bf 24} (2007), 539--548.

\bibitem{masaki}
S.~Masaki, J.~Murphy, J.-I.~Segata, 
Stability of small solitary waves for the 1d NLS with an attractive delta potential,
arXiv preprint https://arxiv.org/abs/1807.01423

\bibitem{Danshita} T.~Nakayama, I.~Danshita, T.~Nikuni, S.~Tsuchiya,
Fano resonance through Higgs bound states in tunneling of Nambu--Goldstone modes,
{\it Phys. Rev. A} {\bf 92} (2015), 043610.

\bibitem{Ohta1} M.~Ohta, G.~Todorova,
Strong instability of standing waves for nonlinear Klein--Gordon equations,
{\it Discrete Contin. Dyn. Syst.} {\bf 12} (2005) 315--322.

\bibitem{Ohta2} M.~Ohta, G.~Todorova,
Strong instability of standing waves for the nonlinear Klein--Gordon equation and 
the Klein--Gordon--Zakharov system,
{\it SIAM J. Math. Anal.} {\bf 38} (2007), 1912--1931.

\bibitem{rs4}
M.~Reed, B.~Simon,
\newblock Methods of modern mathematical physics, vol. IV, Analysis of operators,
\newblock Academic Press, 1979.

\bibitem{Shatah1} J.~Shatah,
Stable standing waves of nonlinear Klein--Gordon equations,
{\it Comm. Math. Phys.} {\bf 91} (1983), 313--327.

\bibitem{Shatah2} J.~Shatah,
Unstable ground state of nonlinear Klein--Gordon equations,
{\it Amer. Math. Soc.} {\bf 290} (1985), 701--710.

\bibitem{Shatah3} J.~Shatah, W.~Strauss,
Instability of nonlinear bound states,
{\it Comm. Math. Phys.} {\bf 100} (1985), 173--190.

\bibitem{Stuart} C.~A.~Stuart,
Lectures on the orbital stability of standing waves and application to the nonlinear 
Schr\"odinger equation, {\it Milan J. Mathematics} {\bf 76} (2008), 329--399.

\bibitem{Stuart2} D.~M.~A.~Stuart,
Modulational approach to stability of non-topological solitons in semilinear wave equations,
{\it  J. Math. Pures Appl.} {\bf 9} (2001), 51--83.

\bibitem{Wu} Y.~Wu,
Instability of the standing waves for the nonlinear Klein-Gordon equations in one dimension,
arXiv preprint https://arxiv.org/abs/1705.04216

\end{thebibliography}
\end{document}